\title[A model for equivariant commutative ring spectra]{A model for genuine equivariant commutative ring spectra away from the group order}
\author{Christian Wimmer}
\date{\today} 
\address{Universit\"at Hamburg, Fachbereich Mathematik, Bundesstra\ss{}e 55, 20146 Hamburg, Germany}
\email{christian.wimmer@uni-hamburg.de}
\keywords{Commutative ring spectra, equivariant homotopy theory, geometric fixed points}
\subjclass[2010]{55P91}
\newcommand{\bn}{\mathbb N}
\newcommand{\bp}{\mathbb P}
\newcommand{\bbq}{\mathbb Q}
\newcommand{\br}{\mathbb R}
\newcommand{\bs}{\mathbb S}
\newcommand{\bu}{\mathbb U}
\newcommand{\bz}{\mathbb Z}
\newcommand{\cc}{\mathcal C}
\newcommand{\cd}{\mathcal D}
\newcommand{\cf}{\mathcal F}
\newcommand{\cm}{\mathcal M}
\newcommand{\cn}{\mathcal N}
\newcommand{\cu}{\mathcal U}
\newcommand{\MF}{\mathcal{MF}}
\newcommand{\lra}{\longrightarrow}
\newcommand{\ra}{\rightarrow}
\newcommand{\tensor}{\otimes}
\newcommand{\regrep}[1][G]{\rho_{#1}}
\newcommand{\orthspec}{\Sp^O}
\newcommand{\gensp}[1][G]{#1\hyph\Sp}
\newcommand{\suspsp}{\Sigma^{\infty}}
\newcommand{\orbcat}[1][G,\fin]{\Orb_{#1}}
\newcommand{\orbcatinv}[1][G,\fin]{\Orb^\times_{#1}}
\newcommand{\preorbcat}[1][G,\fin]{\widetilde{\Orb}_{#1}}
\newcommand{\preorbcatinv}[1][G,\fin]{\widetilde\Orb^\times_{#1}}
\DeclareMathOperator{\Aut}{Aut}
\DeclareMathOperator{\bigsmashp}{\bigwedge}
\DeclareMathOperator{\CDGA}{CDGA}
\DeclareMathOperator{\Ch}{Ch}
\DeclareMathOperator{\Com}{Com}
\DeclareMathOperator{\colim}{colim}
\DeclareMathOperator{\cts}{cts}
\DeclareMathOperator{\ev}{ev}
\DeclareMathOperator{\Fun}{Fun}
\DeclareMathOperator{\fin}{fin}
\DeclareMathOperator{\Fin}{Fin}
\DeclareMathOperator{\gen}{gen}
\DeclareMathOperator{\Ho}{Ho}
\DeclareMathOperator{\Hom}{Hom}
\DeclareMathOperator{\hocolim}{hocolim}
\DeclareMathOperator{\hyph}{-}
\DeclareMathOperator{\Id}{Id}
\DeclareMathOperator{\kernel}{ker}
\DeclareMathOperator{\Lan}{Lan}
\DeclareMathOperator{\Lin}{L}
\DeclareMathOperator{\Map}{Map}
\DeclareMathOperator{\map}{map}
\DeclareMathOperator{\N}{N}
\DeclareMathOperator{\Nat}{Nat}
\DeclareMathOperator{\Ncoh}{N_{\Delta}}
\DeclareMathOperator{\Orb}{Orb}
\DeclareMathOperator{\orth}{O}
\DeclareMathOperator{\op}{op}
\DeclareMathOperator{\pr}{pr}
\DeclareMathOperator{\res}{res}
\DeclareMathOperator{\sh}{sh}
\DeclareMathOperator{\SH}{SH}
\DeclareMathOperator{\Stab}{Stab}
\DeclareMathOperator{\Sp}{Sp}
\DeclareMathOperator{\smashp}{\wedge}
\DeclareMathOperator{\Sym}{Sym}
\DeclareMathOperator{\Top}{Top}
\DeclareMathOperator{\tr}{tr}
\numberwithin{equation}{section}
\newtheorem{Corollary}[equation]{Corollary}
\newtheorem{Lemma}[equation]{Lemma}
\newtheorem{Proposition}[equation]{Proposition}
\newtheorem{Theorem}[equation]{Theorem}
\theoremstyle{definition}
\newtheorem{Definition}[equation]{Definition}
\theoremstyle{remark}
\newtheorem{Remark}[equation]{Remark}
\begin{document}

\maketitle

\begin{abstract}
We use geometric fixed points to describe the homotopy theory of genuine equivariant commutative ring spectra after inverting the group order.
The main innovation is the use of the extra structure provided by the Hill-Hopkins-Ravenel norms in the form of additional norm maps on geometric fixed point diagrams, which turns out to be computationally managable.
\end{abstract}

\tableofcontents


\section{Introduction}\label{intro}
While the stable homotopy category $\SH$ is of vast computational complexity,
dramatic simplifications occur upon rationalizing it (i.e.\ restricting to spectra whose homotopy groups are rational vector spaces)
and the resulting category is equivalent to graded $\mathbb Q$-vector spaces (which can be thought of as a classification of rational cohomology theories).
It is structurally much simpler than what we started with, but still retains interesting information.
This is only amplified when considering (the various versions of) $G$-equivariant stable homotopy theory for a finite group $G$.
The complexity of course increases with that of the group, but one can still give rational algebraic models.
Moreover, this can be done in a sufficiently multiplicative fashion yielding descriptions of rational $A_\infty$- and $E_\infty$-ring spectra in terms of (commutative) differential graded algebras.
In the following we consider \emph{genuine} equivariant stable homotopy theory,
the version with the richest structure.
In terms of cohomology theories, this corresponds to $\operatorname{RO}(G)$-graded theories on $G$-spaces. 
There are many 'levels of commutativity' a ring $G$-spectrum can enjoy and a genuine-commutative ring spectrum is the most sophisticated version
in that sense. Interest in genuine equivariant (ring-) spectra has been especially reinvigorated with their use in the solution of the Kervaire invariant one problem by Hill-Hopkins-Ravenel \cite{kervaire}.
What has been missing so far is an algebraic model for rational genuine commutative ring spectra and in this article we largely finish the equivariant story (for finite groups) by giving one.
In fact, we go somewhat further by working relative to a family of finite subgroups of a fixed compact Lie group $G$.

To put this into context, we briefly outline the classical situation in more detail.
Serre's computation of rational homotopy groups shows that taking stable homotopy groups induces an equivalence
\[
	\SH_{\bbq}\overset{\simeq}{\longrightarrow}\operatorname{grVect}_\bbq
\]
between the rational stable homotopy category and graded $\bbq$-vector spaces, 
which is symmetric monoidal with respect to the smash product and the graded tensor product.
In particular, one also obtains an algebraic model for rational homotopy-commutative ring spectra, namely graded-commutative $\bbq$-algebras.

To deal with more highly structured ring spectra, the comparison has to take place on the level of homotopy theories
(e.g.\ model categories or underlying $\infty$-categories), even if one just wants to describe the homotopy category.
The above functor can be refined to a \emph{chain functor}
\(
	\Sp \rightarrow \Ch_\bbq
\)
taking value in rational chain complexes (coming with a natural equivalence $H_\ast(CX)\cong \pi_\ast(X)\otimes \bbq$).
This is straightforward to construct by either writing it down explicitely or invoking the universal property of the $\infty$-category of spectra.
With some more work one also obtains a (homotopically) symmetric monoidal functor (unpublished work by Schwede-Strickland or $\infty$-categorically using \cite{NikolausOperads}).

This yields an equivalence
\[
	\Com(\Sp)_\bbq \simeq \CDGA_\bbq
\]
between the rational homotopy theories of commutative ring spectra and commutative differential graded algebras
(see \cite{CommutativeHZ-Algebras} for a zig-zag of Quillen equivalences). A more invariant statement
(which now automatically follows from general principles of symmetric monoidal $\infty$-categories)
is that there is a rational equivalence between $E_\infty$-ring spectra and $E_\infty$-algebras.  These are modelled by strict commutative algebra objects on the pointset level (the former even integrally).
We note that this is no longer true in the equivariant setting,
where genuine commutative rings (still modelled by strict commutative algebra objects) encode more information than $E_\infty$-rings.

Additively, the equivariant version (for a finite group $G$) of this story is similar,
although the bookkeeping becomes more involved. As the subgroup $H\leq G$ varies,
the collection of homotopy groups $\underline\pi_\ast X=\{\pi_\ast^H(X)\}$ of a genuine $G$-spectrum form 
a \emph{Mackey functor}, that is the values are related by restriction, transfer, and conjugation maps satisfying certain relations.
Like in the non-equivariant case, taking homotopy groups yields a rational equivalence
\[
\SH(G)_{\bbq}\overset{\simeq}{\longrightarrow}\operatorname{grMF(G)}_\bbq
\]
between the genuine equivariant stable homotopy category and graded Mackey functors
\cite[Appendix A]{GeneralizedTate}, but lifting this to a symmetric monoidal chain functor is a more subtle task.
Luckily, rational Mackey functors can be replaced up to equivalence by a much simpler abelian category.
The category splits as a product
\[
	\MF(G)_\bbq\simeq \prod_{(H\leq G)}\bbq[W_GH]\hyph\operatorname{mod}
\]
of module categories over the Weyl groups $W_GH$, indexed by the conjugacy classes of subgroups $H$
(in fact, this is already true after inverting the group order).
It is a folklore result that this splitting admits a spectral lift
\[
	G\hyph\Sp\overset{\Phi}{\longrightarrow}\prod_{(H\leq G)}\bs[W_GH]\hyph\operatorname{mod}
\]
given by \emph{geometric fixed points}, a nicely behaved version of taking fixed points in equivariant stable homotopy theory.
The $G$-geometric fixed points $\Phi^G:G\hyph\Sp\rightarrow \Sp$ are
characterised up to equivalence by the following properties: They are lax symmetric monoidal,
commute with homotopy colimits and have the 'correct' value $\Phi^G(\Sigma^\infty A)\simeq \Sigma^\infty A^G$ on the suspension spectra of $G$-spaces. As the subgroup $H$ varies, the collection of $H$-geometric fixed points can be assembled into a symmetric monoidal functor as above.
However, as we explain below this does not yield a description of genuine-commutative ring spectra by simply passing to commutative algebra objects.

\subsubsection*{Summary of results}
Let $G$ be a compact Lie group, $\cf$ a family of finite subgroups of $G$, and $R\subseteq \bbq$ a ring such that
the groups orders in $\cf$ are invertible. Let $\Orb_\cf$ be the full (topological) subcategory of the orbit category spanned by the transitive $G$-sets $G/H$ for $H\in \cf$
and $\Orb_\cf^\times$ the subcategory with only isomorphisms. We note that there is an equivalence
$\Orb_\cf^\times\simeq \coprod_{(H\in F)} B(W_GH)$, so that the product of module categories above can be rewritten as a diagram category
\[
	\Orb_\cf^\times\hyph\Sp\simeq\prod_{(H\in \cf)}\bs[W_GH]\hyph\operatorname{mod}
\]
indexed by the invertible orbit category. We prefer the 'coordinate-free' description because it allows for a more conceptual way to express the additional data needed to encode commutative ring spectra. The first step in obtaining a more concrete $R$-local model for these is the following additive result.
Although not surprising to the experts, it does not seem to have been recorded in the literature before in this level of generality
(working relative to an ambient compact Lie group and only inverting the groups orders in the family) and making specific use of geometric fixed points.

\begin{Theorem}
Geometric fixed points induce a symmetric monoidal equivalence
	\[
	(G\hyph\Sp)_{\cf,R}\simeq \Orb^\times_\cf\hyph \Sp_R
	\]
between the $R$-local homotopy theories of genuine equivariant $\cf$-spectra and $\Orb_\cf$-diagrams in spectra.
\end{Theorem}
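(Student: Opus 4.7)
The plan is to construct a symmetric monoidal functor $\Phi$ assembled from the geometric fixed point functors $\Phi^H$ together with their natural Weyl group structure, and then to verify it is an equivalence after $R$-localization by a generator-plus-mapping-space argument. In a strict point-set model such as orthogonal $G$-spectra, each $\Phi^H$ can be made strictly functorial, strong symmetric monoidal on cofibrant objects, and compatible with the evident $W_GH$-actions on $\Phi^H X$ coming from the $N_GH$-action on $X$; conjugation functoriality in $H \in \cf$ then assembles these into a single symmetric monoidal functor
\[
\Phi \colon \gensp \lra \orbcatinv[\cf] \hyph \Sp
\]
using $\orbcatinv[\cf] \simeq \coprod_{(H)} BW_GH$. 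Equip the target with the level-wise $R$-local model structure and the source with the $R$-local $\cf$-model structure, whose weak equivalences are the maps inducing isomorphisms on $\pi_*^H \tensor R$ for all $H \in \cf$ or equivalently $R$-local equivalences on $\Phi^H$ for $H \in \cf$. Then $\Phi$ descends to a conservative colimit-preserving symmetric monoidal functor of the $R$-local $\infty$-categories.

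Both sides are compactly generated stable, so by a Schwede--Shipley style recognition principle it suffices to check that $\Phi$ is fully faithful on a set of compact generators. The orbits $\suspsp_+ G/H$ for $H \in \cf$ generate the left-hand side, while the representables generate $\orbcatinv[\cf] \hyph \Sp_R$. Since $\Phi^K(\suspsp_+ G/H) \simeq \suspsp_+ (G/H)^K$, splitting $(G/H)^K$ into $W_GK$-orbits and identifying stabilizers with subconjugates of $H$ shows that, after inverting the group orders in $\cf$, $\Phi(\suspsp_+ G/H)$ is a finite direct sum of representables indexed by the conjugacy classes $(L)$ with $L$ subconjugate to $H$ in $\cf$. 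In particular $\Phi$ carries generators to finite sums of generators.

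The substantive step is the comparison of mapping spectra between generators. On the source, after $R$-localization, this is the $R$-local Burnside Mackey functor evaluated at $G/H \times G/K$; the tom Dieck splitting, combined with the fact that transfers of index invertible in $R$ are split, identifies it with a direct sum over conjugacy classes $(L)$ in $\cf$ of pieces controlled by the $W_GL$-equivariant homotopy type of $(G/H)^L \times (G/K)^L$. On the target, the mapping spectrum in the diagram category decomposes analogously via the product structure $\orbcatinv[\cf] \simeq \coprod_{(H)} BW_GH$, and the match of the two decompositions under $\Phi$ is precisely the spectral realization of the classical $R$-local splitting of the Burnside category. The main obstacle is twofold: first, coherently upgrading $\Phi$ to a symmetric monoidal functor of $\infty$-categories, which I would handle by invoking a universal property of $\orbcatinv[\cf] \hyph \Sp_R$ as the free presentable $R$-linear symmetric monoidal $\infty$-category on the groupoid $\orbcatinv[\cf]$ so that the symmetric monoidal structure is forced from the restriction of $\Phi$ to orbits; and second, handling the compact-Lie relative case, which I would reduce to the finite-group situation by quotienting by a sufficiently small normal subgroup $N \trianglelefteq G$ intersecting each $H \in \cf$ trivially.
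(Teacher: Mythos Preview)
Your overall strategy---check fully faithfulness on the compact generators $\suspsp_+ G/H$ and verify that their images generate the target---is exactly what the paper does (via Proposition~\ref{prop:eqstablecats}). The computational input differs slightly: rather than the tom Dieck splitting, the paper identifies the map induced by $\Phi$ on $[\suspsp_+ G/K, X]$ directly with the geometric fixed point map $\pi_0^K X \to \prod_{(\hat H\leq K)} (\Phi_0^{\hat H} X)^{W_K\hat H}$, which is an isomorphism after inverting group orders by \cite[3.4.28]{global}. This is essentially the same content repackaged.

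There are, however, two genuine gaps. First, your reduction of the compact Lie case to the finite case by quotienting out a normal subgroup $N\trianglelefteq G$ meeting each $H\in\cf$ trivially does not work: such an $N$ with $G/N$ finite need not exist. For $G=S^1$ and $\cf=\{e,C_2,C_3\}$, every proper closed subgroup of $S^1$ is some $C_n$, and any such meets $C_2$ or $C_3$ nontrivially. The paper never reduces to finite $G$; it works throughout with finite subgroups of the ambient compact Lie group, and the relevant inputs (the Weyl-group decomposition (\ref{Eqn:WeylGroupDecomposition}), the geometric fixed point map, the result \cite[3.4.28]{global}) are stated and proved in that generality.

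Second, your proposed shortcut for the symmetric monoidal coherence---invoking a universal property of $\orbcatinv[\cf]\hyph\Sp_R$ as the free presentable $R$-linear symmetric monoidal $\infty$-category on the groupoid---is not correct as stated: the free such object would carry a Day-convolution-type product, not the pointwise one used here. The paper instead builds an explicit lax symmetric monoidal structure on $\Phi$ at the point-set level and shows it is strong on cofibrant objects (Proposition~\ref{Prop:GfpMonoidal}); there is also a technical wrinkle you have elided, namely that the point-set functor only lands in $\preorbcatinv[\cf]$-diagrams (inner conjugations are not strictly trivial) and one must left Kan extend along $\pi:\preorbcatinv[\cf]\to\orbcatinv[\cf]$ and check this does no harm.
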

As a corollary, by passing to commutative algebra objects on the level of $\infty$-categories
we obtain an equivalence
\[
	E_{\infty}(G\hyph\Sp)_{\cf,R}\simeq \Orb^\times_\cf\hyph E_{\infty}(\Sp_R)
\]
between naive-commutative equivariant ring spectra
(modelled by algebras over an $E_\infty$-operad equipped with trivial $G$-action) and $\Orb_G^\times$-diagrams in non-equivariant
$E_{\infty}$-rings (modelled by commutative ring spectra on the pointset level), see \cite{NaiveCommutative} for a zig-zag of Quillen equivalences in the case where $G$ is finite.

However, the functor $\Phi:(G\hyph\Sp)_{\cf,R}\rightarrow \Orb^\times_\cf\hyph \Sp_R$
has no chance of 'restricting' to an identification of  genuine commutative ring spectra without equipping it with additional structure.
It might seem counterintuitive at first sight, but only when viewed purely on the point-set level.
Equivariantly, the symmetric monoidal structure on the model for genuine $G$-spectra (i.e.\ equivariant orthogonal spectra) encodes more homotopical information than the underlying $\infty$-categorical symmetric monoidal structure, 
namely the Hill-Hopkins-Ravenel norms.
In terms of geometric fixed points of commutative ring spectra, this can be expressed in the form of additional \emph{norm maps} 
\[
	N_H^K:\Phi^HR\rightarrow \Phi^KR
\]
for inclusions $H\leq K$ of finite subgroups of $G$. Working with a suitable point-set level model for geometric fixed point spectra,
we produce a functor
\[
	\Phi:\Com(G\hyph\Sp)_{\cf,R}\rightarrow \Orb_\cf\hyph \Com(\Sp)_R
\]
with values in diagrams indexed by the full orbit category, where the additional projections $G/H\rightarrow G/K$ encode the norm maps.
\begin{Theorem}\label{thm:ModelCommutative}
Geometric fixed points induce an equivalence
\[
\Com(G\hyph\Sp)_{\cf,R}\simeq \Orb_\cf\hyph \Com(\Sp)_R
\]
between the $R$-local homotopy theories of genuine equivariant $\cf$-complete commutative ring spectra and $\Orb_\cf$-diagrams in commutative ring spectra.
\end{Theorem}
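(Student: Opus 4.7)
The plan is to promote the additive symmetric monoidal equivalence of the preceding theorem to one on commutative algebras, where the extension of the target from $\Orb^\times_\cf$-diagrams to $\Orb_\cf$-diagrams will encode the extra homotopical data carried by the Hill-Hopkins-Ravenel norms on commutative $G$-ring spectra.

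First I would pin down the functor $\Phi$ at the point-set level. Starting from a strict symmetric monoidal point-set construction of geometric fixed points on orthogonal $G$-spectra, for a commutative $G$-ring $R$ every $\Phi^H R$ is a commutative (non-equivariant) ring spectrum. For an inclusion $H\leq K$ of finite subgroups, the HHR internal norm — concretely realized as the iterated multiplication along the $W_K H$-orbit inside the commutative ring $\Phi^H R$, compatible with $N_H^K\colon \Com(H\hyph\Sp)\to \Com(K\hyph\Sp)$ — produces a natural ring map $\Phi^H R\to \Phi^K R$. Checking naturality in conjugation and compatibility with composition assembles this data into the desired functor $\Phi\colon \Com(G\hyph\Sp)_{\cf,R}\to \Orb_\cf\hyph\Com(\Sp)_R$.

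Next I would prove it is an equivalence by monadic descent. Both sides are monadic over their respective underlying additive $\infty$-categories: on the left via the free commutative $G$-spectrum monad $\Sym_G$, and on the right via the analogous free functor on $\Orb^\times_\cf$-diagrams whose output necessarily incorporates the $\Orb_\cf$-extension by norms. The previous theorem supplies the symmetric monoidal equivalence $\Phi\colon (G\hyph\Sp)_{\cf,R}\simeq \Orb^\times_\cf\hyph\Sp_R$ at the additive level, so it is enough to identify these two monads through $\Phi$. Concretely, for a $G$-spectrum $X$ one must show that the natural comparison map relating $\Phi^K(\Sym_G X)$ to the free commutative-with-norms construction built from $\{\Phi^H X\}_{H\leq K,\, H\in\cf}$ is an $R$-local equivalence.

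The main obstacle lies in this last identification, which is a multiplicative refinement of the tom Dieck splitting applied to the total symmetric power. After inverting the orders in $\cf$, the contributions indexed by conjugacy classes of subgroups of $\Sigma_n$ with non-free action collapse in a controlled way: each summand $(X^{\smashp n})/\Sigma_n$ decomposes, under $\Phi^K$, into pieces coming from isotropy classes, and each such piece is precisely the image of a norm $N_H^K$ applied to a symmetric power of $\Phi^H X$; this is where the invertibility of the Weyl group orders $|W_K H|$ — built into the hypothesis on $R$ — is essential for the comparison map to be an equivalence. Once the monads have been identified up to coherent natural isomorphism, the equivalence of homotopy theories follows formally from monadicity and the additive theorem, possibly along a zig-zag of Quillen equivalences in the spirit of \cite{CommutativeHZ-Algebras} to handle the passage between point-set and $\infty$-categorical settings.
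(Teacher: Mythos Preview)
Your monadic descent strategy is correct and is exactly the paper's approach: set up the commuting square of forgetful functors over the additive equivalence, verify monadicity on both sides, and reduce to showing that $\Phi$ intertwines the two free functors via Lurie's comparison criterion. However, your account of the key computation mislocates the role of the hypothesis on $R$. The identification of $\Phi^K(\mathbb P X)$ with $(\mathbb P\circ\iota_!)(\Phi X)(G/K)$ is an \emph{integral} statement: one computes $\Phi^K$ of the genuine extended power ${E_G\Sigma_m}_+\wedge_{\Sigma_m}X^{\wedge m}$ via the fixed-points-of-a-quotient decomposition indexed by conjugacy classes of homomorphisms $\alpha\colon K\to\Sigma_m$, then untwists $\alpha^\ast X^{\wedge m}$ as a smash product of norms $N_{\Stab_K j}^K X$ and uses that $\Phi^K$ is strong monoidal and commutes with norms on cofibrant inputs. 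No terms ``collapse'' upon inverting group orders; every graph-subgroup contribution survives and matches a summand of the target formula on the nose.

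The invertibility of the orders in $\cf$ is used elsewhere: for the additive equivalence itself, and for the passage from the point-set indexing category $\preorbcat[\cf]$ to $\orbcat[\cf]$ via left Kan extension, where one needs the inner conjugation action on $\Phi^H R$ (which is only trivial on homotopy groups) to become an honest equivalence so that the Kan extension is computed by homotopy orbits that do nothing. Also, no zig-zag of Quillen equivalences is needed at the end: the argument runs entirely in underlying $\infty$-categories and concludes directly from \cite[4.7.3.16]{ha}.
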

\begin{Remark}
The algebraic precursor to this result is due to Strickland, who shows that there is an equivalence between the categories of rational Tambara functors and $\Orb_G$-diagrams in commutative $\mathbb Q$-algebras (\cite[17.2]{StricklandTambara}).
This can now be viewed as the $\pi_0$-shadow of the topological statement.
\end{Remark}

In particular, for a finite group $G$ we obtain an algebraic rational model:
\begin{Corollary}
Geometric fixed points induce a rational equivalence 
\[
\Com(G\hyph\Sp)_\bbq\simeq \Orb_G\hyph \CDGA_{\bbq}
\]
with $\Orb_G$-diagrams in commutative differential graded algebras.
\end{Corollary}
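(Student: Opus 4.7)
The plan is to derive the corollary by combining Theorem~\ref{thm:ModelCommutative} with the classical non-equivariant comparison between rational commutative ring spectra and $\CDGA_\bbq$, transported across the diagram category $\Orb_G\hyph(-)$.

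First, I would specialize Theorem~\ref{thm:ModelCommutative} to the present setting. Since $G$ is finite, the family $\cf$ consisting of \emph{all} subgroups of $G$ is a family of finite subgroups whose group orders are invertible in $R=\bbq$, and $\cf$-completeness is automatic, so $(G\hyph\Sp)_{\cf,\bbq}=(G\hyph\Sp)_\bbq$ and $\Orb_\cf=\Orb_G$. Theorem~\ref{thm:ModelCommutative} then yields a symmetric monoidal equivalence
\[
\Com(G\hyph\Sp)_\bbq \simeq \Orb_G\hyph \Com(\Sp)_\bbq.
\]

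Next, I would invoke the non-equivariant symmetric monoidal comparison $\Com(\Sp)_\bbq \simeq \CDGA_\bbq$ recalled in the introduction (for instance the zig-zag of Quillen equivalences in \cite{CommutativeHZ-Algebras}, or $\infty$-categorically via the symmetric monoidal chain functor through \cite{NikolausOperads}). I would then post-compose with the induced equivalence on $\Orb_G$-diagrams. Because $\Orb_G\hyph(-)$ is a functor of $\infty$-categories (equivalently, a diagram model structure on a combinatorial model category), a symmetric monoidal equivalence between the targets $\Com(\Sp)_\bbq$ and $\CDGA_\bbq$ is formally promoted to an equivalence
\[
\Orb_G\hyph \Com(\Sp)_\bbq \simeq \Orb_G\hyph \CDGA_\bbq.
\]
Composing the two equivalences gives the desired identification.

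The main honest content is already packaged in Theorem~\ref{thm:ModelCommutative} together with the known non-equivariant comparison, and there is essentially no new obstacle in the corollary itself. The only point requiring a small check is that the diagram constructions on both sides really are formed in the same $\infty$-categorical sense, so that a pointwise equivalence of coefficient systems lifts to an equivalence of diagram categories; this is routine, since $\Orb_G$ is a small (topological) category independent of all the rationalisation and multiplicative structure above, and both $\Com(\Sp)_\bbq$ and $\CDGA_\bbq$ present the same symmetric monoidal $\infty$-category of rational $E_\infty$-algebras. Thus the corollary reduces to Theorem~\ref{thm:ModelCommutative} followed by a purely non-equivariant rational strictification.
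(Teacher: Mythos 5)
Your proposal is correct and is essentially the paper's (implicit) argument: the corollary is meant to follow immediately from Theorem~\ref{thm:ModelCommutative} with $\cf$ the family of all subgroups of the finite group $G$ and $R=\bbq$ (so $\cf$-completeness is vacuous), composed with the objectwise non-equivariant rational comparison $\Com(\Sp)_\bbq\simeq\CDGA_\bbq$ applied to $\Orb_G$-diagrams. The only tiny quibble is that Theorem~\ref{thm:ModelCommutative} is stated merely as an equivalence, not a symmetric monoidal one, but your argument only needs a plain equivalence of the coefficient $\infty$-categories to pass to functor categories, so nothing is affected.
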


We briefly sketch the strategy employed to demonstrate Theorem 1.2.
Once the geometric fixed point functors are constructed, the relevant data can be displayed in the following diagram 
\begin{center}
	\begin{tikzcd}[row sep = large, column sep = large]
	(\gensp)_{\mathcal F, R}	\arrow[r, "\Phi"]\arrow[d, shift right = 2, "\mathbb P", swap]	&
	\Orb^{\times}_{\mathcal F}\hyph \Sp_R	\arrow[d, shift right = 2, swap, "\mathbb P\circ \iota_!"]		\\
	
	\Com(\gensp)_{\mathcal F, R}	\arrow[r, "\Phi"]\arrow[u, shift right = 2, "\mathbb U", swap]	&
	\Orb_{\mathcal F}\hyph \Com(\Sp)_R	\arrow[u, shift right = 2, "\mathbb U", swap],
	\end{tikzcd}
\end{center}
where the vertical pairs are free-forgetful adjunctions. These are monadic (in the higher categorical sense), which is somewhat tedious to show from a technical point of view.
The square with the forgetful functors commutes (up to equivalence) essentially by construction and the upper horizontal functor is an equivalence (Theorem 1.1).
We wish to conclude that the lower horizontal arrow is also an equivalence.
While all categories and functors are already defined at the pointset level (i.e.\ the $\infty$-categories are presented by model categories and the functors are induced by homotopical functors or Quillen functors), it is at this point that things work out much cleaner in the higher categorical setting, because we can apply the 'monadic comparison theorem'.
This says that in this situation it will suffice to show that geometric fixed points preserve free commutative ring spectra, i.e.\ the square with the free functors commutes.
This is the most non-formal input and amounts to a computation of geometric fixed point of free ring ring spectra.
\subsubsection*{Generalizations}
Naive-commutative and genuine-commutative $G$-ring spectra lie at opposite ends of a whole family of highly structured equivariant ring spectra.
The latter can also be described as algebras over $G\hyph E_\infty$-operads, i.e.\ $G$-operads $\mathcal O$ such that the $n$-ary operations
$\mathcal O(n)\simeq E_G\Sigma_n$ form a universal space for the family of \emph{graph subgroups} $\Gamma\leq G\times \Sigma_n$
(the subgroups arising as the graph of a morphism $G\geq H\overset{\alpha}{\rightarrow}\Sigma_n)$. Let $G$ be a finite group. 
In \cite{noperads} Blumberg and Hill introduce a more general class of $N_\infty$-operads,
which interpolate between $E_\infty$- and $G\hyph E_\infty$-operads. 
Algebras over an $N_\infty$-operad admit norm maps along certain subgroup inclusions, which are called \emph{admissible}.
In the above extreme cases there are no norm maps respectively norms along all subgroup inclusions.
In light of Theorem \ref{thm:ModelCommutative} it is natural to expect an equivalence
\[
	\operatorname{Alg}_{\mathcal O}(G\hyph\Sp_R)\simeq \Orb_{G,\mathcal O}\hyph\Com(\Sp)
\]
with diagrams indexed by the subcategory of the orbit category with morphisms corresponding to admissible subgroup inclusions (or rather subconjugacy classes).
However, it is more subtle to construct a comparison functor, which is being carried out in ongoing work.
\subsubsection*{Acknowledgements}
This article is a natural outgrowth and continuation of the work in my dotoral thesis written under the supervison of Stefan Schwede.
I would like to thank him as well as Markus Hausmann, Thomas Nikolaus, Irakli Patchkoria, and Emanuelle Dotto for helpful discussions.

\section{Preliminaries}

Let $G$ be a compact Lie group. At the point-set level we will work in the category $G\hyph\Top$
(resp.\ the based version $G\hyph\Top_\ast$) of compactly generated, weak Hausdorff spaces with continuous (based) $G$-action.
The set of based $G$-equivariant homotopy classes of maps between based $G$-spaces $X$, $Y$ will be denoted by $[X,Y]^G$.
We will often encounter categorical constructions (e.g.\ coproducts) indexed by conjugacy classes of subgroups (or morphisms),
where the individual terms make explicit reference to representatives. In that case we will simply write $\coprod_{(H\leq G)}X(H)$,
meaning that $H$ runs over a complete set of representatives that has been implicitly chosen.

We recall that the \emph{Weyl group} of a subgroup $H\leq G$ is the topological group $W_GH = N_GH/H$, 
where $N_GH = \{g\in G\mid H^g = H\}$ is the normalizer of $H$ in $G$. There is a canonical isomorphism
\begin{equation}\label{Eqn:WeylGroupAutomorphism}
W_GH \cong \Aut_G(G/H),\quad[g]\mapsto(G/H\xrightarrow{(-)\cdot g^{-1}}G/H)
\end{equation}
to the automorphism group of the $G$-space $G/H$. In particular, $W_GH$ acts on the $H$-fixed points of any $G$-space and
for the homogenous space $G/K$ there is a decomposition of $W_GH$-spaces
\begin{equation}\label{Eqn:WeylGroupDecomposition}
(G/K)^H \simeq\coprod_{\substack{(H'\leq G),\\H'\sim_G H}}W_GH/W_KH
\end{equation}
where the right-hand side is indexed by the $K$-conjugacy classes of subgroups that are $G$-conjugate to $H$.
This is a slight abuse of notation, since the $W_KH$ are not canonically embedded as subgroups of $W_GH$.
The map is classified by a choice of elements $g(H')\in G$ such that $H'= H^{g(H')}$, 
which also yields identifications $W_KH' \cong W_{^gK}H\leq W_GH$.

Let $\cc$ be a category and $I$ an (essentially small) indexing category.
The functor category $\Fun(I,\cc)$ of $I$-diagrams in $\cc$ will also be denoted by $\cc^I$ or $I\hyph\cc$
(similarly in the topologically enriched and higher categorical settings).
In particular, we write $\cc^{BG}$ for the category of $G$-objects in $\cc$, where $G$ is a (topological) group.
In the topologically enriched or higher categorical setting $BG$ means the full classifying space.

We will later work in the underlying $\infty$-categories of model categories (e.g.\ equivariant orthogonal spectra),
where the objects are essentially given by the objects of the 1-category.
To avoid confusion we will indicate in the notation that a categorical construction is homotopy invariant.
For example we write $X_{hG}$ for the homotopy orbits, which is the $\infty$-categorical quotient and would in general be denoted by $X/G$ for $X$ a $G$-object in some $\infty$-category. Instead, we reserve this notation to refer to the 1-categorical quotient
(which of course might represent the correct homotopy type).

\subsection{Equivariant orthogonal spectra}

We briefly review equivariant orthogonal spectra \cite{equivorth}, referring to \cite[Ch.\ 3]{global} for a detailed textbook exposition.
Let $\orth$ be the \emph{orthogonal indexing category}, i.e.\ the topological category with objects the finite dimensional inner product spaces
and morphism spaces given by the Thom spaces $\orth(V, W)$ of the orthogonal complement bundle
\[
	E(V,W)=\{(\phi, w)\in \Lin(V,W)\times W\mid w\in \phi(V)^{\perp}\}
\]
over the space of linear isometric embeddings $\Lin(V,W)$.
The composition is induced by the pairing
\[
	E(W,U)\times E(V,W)\rightarrow E(V,U), \quad ((\psi,u),(\phi, w))\mapsto (\psi\circ\phi, \psi(w)+u).
\]

\begin{Definition}
Let $G$ be a compact Lie group. The \emph{category of orthogonal $G$-spectra} $G\hyph\orthspec$
is the category of based continuous functors from $\orth$ to based $G$-spaces.
As a diagram category, it is canonically tensored, cotensored, and enriched over based $G$-spaces.
\end{Definition}

In particular, an orthogonal $G$-spectrum $X$ comes with \emph{structure maps} and \emph{action maps}
\[
	\sigma^X_{V,W}:X(V)\smashp S^W\rightarrow X(V\oplus W),\quad\alpha^X_{V,W}:\Lin(V, W)_+\smashp X(V)\rightarrow X(W)
\]
induced by the inclusion $S^W\subset \orth(V,V\oplus W)$ respectively by the zero section $\Lin(V, W)\rightarrow \orth(V, W)$. 
\begin{Remark}
It also turns out that this data is actually sufficient to uniquely describe $X$, cf.\ \cite[3.1.6]{global}.
\end{Remark}
In particular, the action maps restrict to an action of the orthogonal group $\orth(V)$ on the value $X(V)$ that commutes with the $G$-action.
If $V$ is an orthognal $G$-representation, we will always equip $X(V)$ with the resulting diagonal $G$-action.

\subsubsection{Equivariant homotopy groups}
The \emph{$0$-th equivariant homotopy group} of an orthogonal $G$-spectrum is defined as the directed colimit
\[
	\pi_0^G(X)=\colim_{V\subset\cu_G}[S^V, X(V)]^G
\]
over the subrepresentations of a complete $G$-universe $\cu_G$
(i.e.\ the countable direct sum of every $G$-representation embeds into $\cu_G$),
where the structure maps send a representative $f:S^V\rightarrow X(V)$ for $V\subseteq W$
to the composite (\cite[3.1.9]{global})
\[
	S^W\cong S^V\smashp S^{V^\perp}\xrightarrow{f\smashp \Id}X(V)\smashp S^{V^\perp}\overset{\sigma}{\longrightarrow} X(V\oplus V^\perp)\cong X(W).
\]
We will mostly be interested in the homotopy groups at finite subgroups $H\leq G$
where a countable sum $\cu_H=\oplus \rho_H$ of regular representations may be used.
The integer graded homotopy groups are obtained by looping or shifting the spectrum,
where for an orthogonal $G$-spectrum $X$ loops are applied levelwise to yield $\Omega X=\Omega X(-)$ and the \emph{shift} is given by $\operatorname{sh}X=X(\br\oplus -)$.
For $k\in\bz$ we then set
\[
	\pi_k^G(X)= \begin{cases}		\pi^G_0 \Omega^kX, &\text{if }k\geq 0\\
							\pi^G_0 \operatorname{sh}^{-k}X,&\mbox{if } k< 0\end{cases}
\]
and call a map $f:X\rightarrow Y$ a \emph{$\underline{\pi}_\ast$-isomorphism} or \emph{stable equivalence}
if it induces isomorphisms on $\pi_k^H(-)$ for all closed subgroups $H\leq G$ and $k\in\bz$.

\subsubsection{Products}
The category of orthogonal $G$-spectra carries a symmetric monoidal structure
\[
	-\smashp-:G\hyph\orthspec\times G\hyph\orthspec\rightarrow G\hyph\orthspec
\]
with unit the sphere spectrum $\bs$. The \emph{smash product} $X\smashp Y$ is determined up to preferred isomorphism as the initial example 
of a \emph{bimorphism} $b:(X,Y)\rightarrow Z$, i.e.\ a collection of maps $b_{V,W}:X(V)\smashp Y(Y)\rightarrow Z(V\oplus W)$ forming a natural transformation of functors
$\orth\times\orth\rightarrow \orth$. See \cite[3.5]{global} for details.

\begin{Definition}
We denote by $\Com(G\hyph\orthspec)$ the category of \emph{commutative orthogonal $G$-ring spectra}, that is,
the commutative monoid objects in orthogonal $G$-spectra with respect to the smash product.
\end{Definition}

\subsubsection{Model structures}\label{Sec:ModelStructures}
The $\underline\pi_\ast$-isomorphisms are part of a cofibrantly generated, topological, stable, and monoidal model structure on $G\hyph\orthspec$,
the (positive) \emph{flat} model structure of \cite{stolz}, where it is called the $\bs$-model structure.
We will also use the \emph{complete model structure} of \cite{kervaire},
which is defined for finite groups and has the same weak equivalences.
Let
\[
	\Sym X=\bigvee_{n\geq 0}\Sym^n X= \bigvee_{n\geq 0} X^{\smashp n}/\Sigma_n
\]
denote the symmetric algebra with respect to the smash product on orthogonal spectra,
so that $\Sym:\orthspec\rightarrow G\hyph\Com(G\hyph\orthspec)$ is the left adjoint of the forgeful functor $U$.
The model structures lift to commutative ring spectra such that weak equivalences and fibrations are detected on underlying spectra \cite[Thm.\  2.3.37]{stolz}, \cite[B.130]{kervaire}. In other words, the above forms a Quillen adjunction.
\begin{Remark}\label{Rmk:Error}
In order to show this, one needs to know that the symmetric powers $\Sym^n$ are homotopical on positively flat spectra.
However, in the case of the flat model structure there is a mistake going back to Mandell-May (\cite[Lemma III.8.4]{equivorth}, also see the discussion in \cite[B.120]{kervaire}).
It can be traced to \cite[Lemma 2.3.34]{stolz} and its proof where $\bp^{n}X$ is incorrectly identified with the 'naive' homotopy orbits
${E\Sigma_m}_+\smashp_{\Sigma_m}X^{\smashp m}$. Instead one has to use the 'genuine' version, where $E\Sigma_m$
is replaced by $E_G\Sigma_m$, a universal space for the family of \emph{graph subgroups} of $G\times \Sigma_m$
 (i.e.\ the graphs $\Gamma(\alpha)$ of homomorphisms $\alpha:H\rightarrow \Sigma_m$ from a closed subgroup of $H\leq G$).
The arguments of \cite{stolz} then go through to show that the projection to the point induces an equivalence
\begin{equation}\label{Eqn:GenuineOrbits}
	{E_G\Sigma_n}_+\smashp_{\Sigma_n}X^{\smashp n}\overset{\simeq}{\longrightarrow}X^{\smashp n}/\Sigma_n
\end{equation}
for positively flat $X$. The functor
\[
	{E_G\Sigma_n}_+\smashp_{\Sigma_n}(-):(G\times\Sigma_m)\hyph\orthspec\rightarrow G\hyph\orthspec
\]
is homotopical with respect to the graph equivalences ($\pi^{\Gamma}_\ast$-isomorphism for all graph subgroups $\Gamma$)
on $(G\times\Sigma_m)$-spectra, in contrast to ${E\Sigma_n}_+\smashp_{\Sigma_n}(-)$,
which preserves $G$-equivalences. So further homotopical analysis is required and one shows that $X^{\smashp m}$ is homotopical on positively flat spectra $X$.

Alternatively, one can also set up the model structure by following the strategy employed in \cite{global},
where a global model structure on commutative ring spectra is established by verifying a symmetrizability condition on generating (acyclic) cofibrations. Another possibility is to extend the complete model structure to compact Lie groups with equivalences the $\cf$-equivalences for a family $\cf$ of finite subgroups, which is all we will later need.

For finite groups $G$ all of this can be safely ignored and we just use the complete model structure in that case.
\end{Remark}

Since symmetric powers are left derivable in both model structures, an HHR-cofibrant replacement $X_c\rightarrow X$
of a pos.\ flat spectrum $X$ induces a weak equivalence
\[
	(X_c)^{\smashp n}/\Sigma_n\overset{\simeq}{\lra} X^{\smashp n}/\Sigma_n.
\]
\subsubsection{Geometric fixed point spectra}

We write $\rho_G=\br[G]$ for the regular representation of a finite group $G$ and equip it with the inner product such that the elements of $G$ form an orthonormal basis.
The norm element $N_G=\sum_{g\in G}g$ spans the canonical $\br$-summand of $G$-fixed points $(\rho_G)^G=\mathbb R \{N_G\}$.
\begin{Definition}
Let $G$ be a finite group and $X \in G\hyph\orthspec$ an orthogonal $G$-spectrum.
The \emph{geometric fixed point spectrum} $\Phi^G X \in \orthspec$ is defined at an inner product space $V$ as 
\[
(\Phi^G X)(V)=X(\regrep\tensor V)^G,
\]
where $G$ acts diagonally with respect to the action on $X$ and $\rho_G$.
The action maps are induced from those of $X$ and the structure maps 
for inner product spaces $V$ and $W$ are given by the composite
\begin{align*}
	X(\regrep\tensor V)^G\smashp S^W \cong (X(\regrep\tensor V)\smashp S^{\regrep\otimes W})^G\overset{\sigma^G}{\longrightarrow}&\quad X((\regrep\otimes V) \oplus(\regrep\otimes W))^G\\
	\cong&\quad X(\regrep\otimes (V\oplus W))^G,
\end{align*}
where the first isomorphism uses the preferred identification $(\regrep)^G=\br\{N_G\}\cong\br$.
\end{Definition}

If $G$ is a compact Lie group and $H\leq G$ a finite subgroup, we will write $\Phi^HX=\Phi^H(\res^G_HX)$
for the $H$-geometric fixed points of the restriction of the restriction to the subgroup $H$.
From the above definition we see that for based $G$-spaces $A$ and orthogonal $G$-spectra $X$
there is a natural isomorphism
\begin{equation*}
	\Phi^G(A\smashp X)\cong A^G\smashp\Phi^GX
\end{equation*}
and additionally for trivial $G$-spaces
\begin{equation*}
	\Phi^G(\map(A,X))\cong\map(A,\Phi^GX).
\end{equation*}
Moreover, cone sequence are preserved on the point-set level: If $f:X\rightarrow Y$ is a map of orthogonal $G$-spectra,
the canonical map $C(\Phi^G(f))\cong \Phi^G(C(f))$ from the mapping cone of $\Phi^G(f):\Phi^G(x)\rightarrow \Phi^G(Y)$
is an isomorphism.

\begin{Remark}\label{rmk:gfpmodels}
We can also express the above definition in a more diagrammatic fashion. Tensoring with the regular representation defines a topological functor
\[	\rho_G\otimes -:O\ra O_G	\]
from the orthogonal indexing category to the subcategory of $G$-representations and $G$-fixed morphism spaces
(i.e.\ pairs of equivariant isometries and points lying in the fixed points of the orthogonal complement).
On morphisms the functor $\rho_G\otimes (-)$ induces the map
\[
O(V,W)\ra O_G(\regrep\otimes V, \regrep\otimes W)=\left(O(\regrep\otimes V, \regrep\otimes W)\right)^G
\]
sending $[\phi, w]$ to $[\rho_G\otimes \phi, N_G\otimes w]$. 
The geometric fixed points of an orthogonal $G$-spectrum $X:O\ra G\hyph\Top_\ast$ are then given by the composite in the upper row of the diagram
\begin{center}
\begin{tikzcd}[column sep=large, row sep = small]
O	\arrow[r, "\rho_G\otimes -"]\arrow[rdd, "\cong Id"]	&	O_G		\arrow[rr, "X^G"]\arrow[dd, "(-)^G"]	&&	\Top_\ast,	\\	\\
								&	O		\arrow[rruu, "\Phi^G_M"]

\end{tikzcd}
\end{center}
where $X^G$ is the functor obtained by restricing to $O_G$ and taking fixed points at each value.
This also shows the relation to the construction discussed by Mandell-May in \cite[V.4]{equivorth},
which is referred to as the \emph{monoidal geometric fixed point functor} and denoted $\Phi^G_M$ in \cite[B.10]{kervaire}.
It is defined as the (topological) left Kan extension of $X^G$ along the functor $(-)^G:O_G\ra O$, $V\mapsto V^G$ and
the composition with $\rho_G\otimes -$ is canonically isomorphic to the identity.
The left Kan extension
comes with a natural transformation $X^G\Rightarrow \Phi^G_M\circ (-)^G$ and so we obtain a natural transformation
(now with respect to the spectrum $X$)
\[
\Phi^GX\ra \Phi^G_MX.
\]
As explained in \cite[B.10.5]{kervaire}, $\Phi^G_MX$ has the correct homotopy type if $X$ is cofibrant in the complete model structure.
In that case a comparison of homotopy groups shows that the above map is a $\pi_\ast$-isomorphism.
More precisely, one checks that the zig-zag of equivalences in \cite[B.201]{kervaire} identifies the induced map with the isomorphism of \cite[3.3.8]{global}.
\end{Remark}

We now discuss the functoriality of the geometric fixed points as the group $G$ varies.
Given a surjective homomorphism $\alpha:K\twoheadrightarrow G$, we define a $K$-equivariant linear isometric embedding of regular representations in the other direction as
\begin{align*}
\alpha_{!}:\alpha^\ast\regrep\hookrightarrow\regrep[K]\text{, }\quad g \mapsto \frac{1}{\sqrt{\vert\kernel\alpha\vert}}\sum_{k\in \alpha^{-1}(g)}k\text{.}
\end{align*}
This allows us to construct a natural \emph{inflation map}
\begin{equation}
\alpha^\ast:\Phi^GX\longrightarrow \Phi^K(\alpha^\ast X)
\end{equation}
between geometric fixed points of a $G$-orthogonal spectrum $X$. At an inner product space $V$ the restriction along $\alpha$ is the composite
\begin{align*}
X(\regrep\otimes V)^G=(\alpha^\ast X)((\alpha^\ast\regrep)\otimes V)^K\xrightarrow{(\alpha_!\otimes V)_\ast}&\quad (\alpha^\ast X)(\regrep[K]\otimes V)^K \text{.}
\end{align*}
Now let $\beta:L\twoheadrightarrow K$ be another surjective group homomorphism. By inspection of the formulas defining the embeddings of regular representations one sees that the diagram
\begin{center}
\begin{tikzcd}
(\alpha\beta)^\ast\regrep=\beta^\ast(\alpha^\ast\regrep ) \arrow[rr,"(\alpha\beta)_!"]\arrow[rd, "\beta^\ast\alpha_!"] 	&						&	\regrep[L]	\\
																& \beta^\ast\regrep[K]	\arrow[ru,"\beta_!"]
\end{tikzcd}
\end{center}
commutes and it follows that the restriction maps are compatible with composition:
\[
(\alpha\circ\beta)^\ast=\beta^\ast\circ\alpha^\ast:\Phi^GX\longrightarrow \Phi^L((\alpha\circ\beta)^\ast X).
\]

Let $G$ be a compact Lie group and $H\leq G$ a finite subgroup.
The \emph{conjugation map} associated with an element $g\in G$ is the composition
\begin{equation}
	g^\ast:\Phi^HX\xrightarrow{(c_g)^\ast}\Phi^{H^g}((c_g)^\ast X)\xrightarrow{(l^X_{g^{-1}})_\ast}\Phi^{H^g}X,
\end{equation}
where we write $H^g=g^{-1}Hg$ for the conjugated subgroup and $l^X_{g^{-1}}:(c_g)^\ast X\rightarrow X$ is the morphism given by left translation.
\begin{Definition}
Let $\preorbcatinv[G,\fin]$ denote the topological category of finite subgroups of $G$ with exactly one morphism $g:H\rightarrow H^g$ for every element $g\in G$.
It comes with a functor
\[
	\pi:\preorbcatinv[G,\fin]\rightarrow \orbcatinv[G,\fin], H \mapsto G/H
\]
to the subcategory of the orbit category of $G$ consisting of all isomorphisms.
\end{Definition}
\begin{Remark}\label{Rmk:DisjointUnionWeyl}
A choice of representatives for the conjugacy classes of finite subgroups of $G$ amounts to choosing a skeleton for $\orbcatinv[G,\fin]$ and gives an equivalence
\[
	\orbcatinv[G,\fin]\simeq \coprod_{(H\leq G)}B(W_GH)
\]
of topological categories with a disjoint union of Weyl groups, using their identification as automorphism groups (\ref{Eqn:WeylGroupAutomorphism}).
\end{Remark}
The preceeding discussion can be summarized as follows:
\begin{Proposition}\label{Prop:GfpsFunctoriality}
The conjugation maps endow the geometric fixed point construction with the structure of a continuous functor
\[
	\Phi:\preorbcatinv\times G\hyph\orthspec\rightarrow \orthspec.
\]
\end{Proposition}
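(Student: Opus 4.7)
The plan is to send a morphism $(g,f)\colon (H,X) \to (H^g,Y)$ to $\Phi^{H^g}(f) \circ g^\ast$, where $g^\ast$ is the conjugation map just defined. Naturality in the spectrum variable is automatic, since both constituents $(c_g)^\ast$ and $(l^X_{g^{-1}})_\ast$ of $g^\ast$ are already natural in $X$; and for $g = e$ the conjugation $c_e$ and the left-translation $l^X_e$ are identities, so $e^\ast = \Id$ and the identity axiom holds. The substantive content is therefore the composition rule $h^\ast \circ g^\ast = (gh)^\ast$ for composable morphisms $g\colon H \to H^g$ and $h\colon H^g \to H^{gh}$ in $\preorbcatinv$, whose composite corresponds to $gh \in G$.

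The proof of this identity has two ingredients. First, since $g(hkh^{-1})g^{-1} = (gh)k(gh)^{-1}$, one has $c_g \circ c_h = c_{gh}$ as isomorphisms $H^{gh} \to H$, so the compatibility of inflation with composition recorded just above the definition of conjugation gives
\[
(c_h)^\ast \circ (c_g)^\ast = (c_{gh})^\ast.
\]
Second, naturality of the inflation map $(c_h)^\ast$ in the spectrum variable, applied to the $H^g$-equivariant map $l^X_{g^{-1}} \colon (c_g)^\ast X \to X$, yields
\[
(c_h)^\ast \circ (l^X_{g^{-1}})_\ast = \bigl((c_h)^\ast l^X_{g^{-1}}\bigr)_\ast \circ (c_h)^\ast,
\]
and since $(c_h)^\ast l^X_{g^{-1}}$ has underlying map given by left translation by $g^{-1}$, composing it with $l^X_{h^{-1}}$ produces left translation by $h^{-1}g^{-1} = (gh)^{-1}$. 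Combining these steps,
\[
h^\ast \circ g^\ast = (l^X_{h^{-1}})_\ast \circ (c_h)^\ast \circ (l^X_{g^{-1}})_\ast \circ (c_g)^\ast = (l^X_{(gh)^{-1}})_\ast \circ (c_{gh})^\ast = (gh)^\ast.
\]

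For continuity of the map $\preorbcatinv(H, K) \times (G\hyph\orthspec)(X, Y) \to \orthspec(\Phi^H X, \Phi^K Y)$, I would argue separately in each variable. Continuity in $f$ follows because $\Phi^H$ is built from continuous functors, as made manifest by the Mandell--May description in Remark \ref{rmk:gfpmodels}. Continuity in $g$ reduces to continuity of the left-translation action $G \times X(V) \to X(V)$ entering $(l^X_{g^{-1}})_\ast$ and of the continuous assignment $\alpha \mapsto \alpha_!$ entering the definition of $(c_g)^\ast$, both of which are built into the setup. The main obstacle in the proposition is really the bookkeeping around the composition rule; the substantive mathematical work has already been carried out in constructing the individual conjugation maps and verifying the composition law for inflations.
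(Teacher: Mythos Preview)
Your proof is correct and supplies exactly the details the paper leaves implicit: the paper gives no explicit proof of this proposition, introducing it with ``The preceding discussion can be summarized as follows'' and relying on the already-established composition law $(\alpha\circ\beta)^\ast=\beta^\ast\circ\alpha^\ast$ for inflations together with the definition of $g^\ast$. Your verification of $h^\ast\circ g^\ast=(gh)^\ast$ via $c_g\circ c_h=c_{gh}$, naturality of inflation in the spectrum variable, and the composition $l^X_{h^{-1}}\circ(c_h)^\ast l^X_{g^{-1}}=l^X_{(gh)^{-1}}$ is precisely the unpacking of that summary, so your approach is the same as the paper's in spirit, only made explicit.
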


We now consider the homotopy groups of geometric fixed point spectra.
Writing out the colimit defining them gives an identification
\begin{align*}
	\pi_0\Phi^GX=\colim_{n\geq 0}\pi_n X(\rho_G\otimes \mathbb R^n)^G
	&\cong \colim_{n\geq 0} [S^{(\rho_G\otimes \mathbb R^n)^G},X(\rho_G\otimes \mathbb R^n)^G]	\\
	&\cong \colim_{V\subset \cu_G} [S^{V^G}, X(V)^G]=\Phi^G_0 X,
\end{align*}
where the last isomorphism follows from the cofinality of the $\rho_G\tensor\br^n$ among orthogonal $G$-representations
and the equality on the right is the definition of the \emph{geometric fixed point homotopy groups} studied in \cite[3.3]{global}.
These also admit intrinsically defined inflation maps and by inspection the above isomorphism commutes with them.
Furthermore, there is a natural comparison map, the \emph{geometric fixed point map}
\begin{equation}\label{GeomFixMap}
	\pi_0^GX\longrightarrow \Phi^G_0X
\end{equation}
that sends a representative $f:S^V\rightarrow X(V)$ to $f^G:S^{V^G}\rightarrow X(V)^G$.
As with the equivariant homotopy groups, the definition of $\Phi_0$ is extended to all degrees by looping or shifting.
\begin{Proposition}
There is a natural isomorphism $\pi_\ast(\Phi X)\cong \underline\Phi_\ast (X)$ of graded $\Ho(\preorbcatinv)$-modules.
\end{Proposition}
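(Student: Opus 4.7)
The plan is to reduce to a single finite subgroup $H\leq G$ and handle two issues: extending the pointwise identification $\pi_0\Phi^HX\cong\Phi_0^HX$ recorded in the excerpt to all integer degrees, and then verifying naturality with respect to the conjugation morphisms $g:H\to H^g$ in $\preorbcatinv$.

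For $k\geq 0$, the compatibility $\Phi^H\map(A,-)\cong\map(A,\Phi^H-)$ for trivial $G$-spaces $A$ (already noted in the excerpt), specialised to $A=S^k$, yields a natural isomorphism $\Omega^k\Phi^HX\cong\Phi^H(\Omega^kX)$. Applying $\pi_0$ and composing with the $\pi_0$-identification for $\Omega^kX$ then gives $\pi_k\Phi^HX\cong\pi_0\Phi^H(\Omega^kX)\cong\Phi_0^H(\Omega^kX)=\Phi_k^HX$. For $k<0$, both sides unfold to directed colimits
\[
\pi_k\Phi^HX=\colim_n [S^n,X(\rho_H\otimes\br^{-k}\oplus\rho_H\otimes\br^n)^H],\quad \Phi_k^HX=\colim_n[S^n,X(\br^{-k}\oplus\rho_H\otimes\br^n)^H].
\]
The $H$-equivariant identification $\rho_H\otimes\br^{-k}\oplus\rho_H\otimes\br^n\cong\rho_H\otimes\br^{n-k}$, combined with the cofinality of both the $\rho_H\otimes\br^m$ and the $\br^{-k}\oplus\rho_H\otimes\br^n$ among subrepresentations of $\cu_H$, exhibits both colimits as $\colim_{V\subset\cu_H}[S^{V^H+k},X(V)^H]$ up to reindexing, producing the desired natural isomorphism.

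To check equivariance, one unwinds the two conjugation actions. The map $g^\ast$ on $\Phi^HX$ is by construction the composite of inflation along $c_g:H^g\to H$ followed by left translation $l_{g^{-1}}$, and the $g$-action on the geometric fixed point homotopy groups $\Phi_\ast^HX$ recalled from \cite[3.3]{global} is given by the identical recipe on representatives $S^{V^H}\to X(V)^H$. The $\pi_0$-identification of the excerpt is therefore equivariant by direct inspection, and this propagates to all degrees because both the $\Omega^k$-construction and the cofinality matching are $\preorbcatinv$-natural. Finally, the action descends to $\Ho(\preorbcatinv)$ because $g\mapsto g^\ast$ is continuous with values in the discrete $\Hom$-group and hence is constant on path components of $\preorbcatinv(H,H^g)$.

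The main obstacle is the negative-degree case: one must arrange two a priori different cofinal systems of subrepresentations of $\cu_H$ so that they simultaneously compute $\pi_k\Phi^HX$ and $\Phi_k^HX$. Once this bookkeeping is in place, the positive-degree step and the equivariance check reduce to a diagram chase.
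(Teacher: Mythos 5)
Your proposal is correct, but it takes a genuinely different route at the crux, which is the negative-degree comparison. The paper constructs a natural spectrum-level comparison map $\Phi^H(\sh X)\to\sh(\Phi^H X)$, induced by the isometry $(\rho_H\otimes V)\oplus\br\hookrightarrow\rho_H\otimes(V\oplus\br)$, and proves it is a stable equivalence: surjectivity on homotopy comes from the commuting square with $\lambda:\Sigma(-)\to\sh(-)$ (a stable equivalence), and injectivity from a stabilization diagram chase that pushes a representative $S^{l+n}\to X(1+n\rho_H)^H$ into $X((1+n)\rho_H)^H$. You instead bypass the comparison map and identify $\pi_k\Phi^H X$ and $\Phi^H_k X$ directly as colimits over two interleaved cofinal families ($\rho_H\otimes\br^m$ versus $\br^{-k}\oplus\rho_H\otimes\br^n$) inside one directed system $V\mapsto[S^{V^H+k},X(V)^H]$ over the subrepresentations of $\cu_H$. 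Both arguments work and are morally the same: your interleaving subsumes exactly the surjectivity/injectivity chase of the paper, while the paper's version has the mild advantage of producing an actual natural map of spectra that is a $\pi_\ast$-isomorphism. The place where the real work sits in your version is the unstated verification that the transition maps of the two systems (the structure maps of $\sh^{-k}\Phi^H X$, respectively of $\Phi^H(\sh^{-k}X)$) are literally restrictions of the transition maps of the common system, with coherent identifications of the fixed-point spheres; this does go through because the structure maps of $\Phi^H$ are by definition $H$-fixed points of those of $X$ along $\rho_H$-blocks, but it is the counterpart of the paper's diagram chase rather than a triviality. Your treatment of positive degrees (looping) agrees with the paper's one-line argument, and your equivariance discussion is if anything more explicit than the paper's ``by inspection.''
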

\begin{proof}
The above identification in degree 0 prolongs to positive degrees, since geometric fixed point commute with looping.
There is a natural comparison map
\[
	\Phi^G(\sh X)\rightarrow \sh(\Phi^GX)
\]
induced by the linear isometry $(\rho_G\otimes V)\oplus\br\hookrightarrow \rho_G\otimes (V\oplus\br)$ including $\br$ as the $G$-fixed points.
We need to show that it is a stable equivalence.
There is a commutative square
\begin{center}
\begin{tikzcd}[row sep = large, column sep = large]
\sh(\Phi^GX)									&	\Phi^G(\sh X)		\arrow[l]	\\
\Sigma(\Phi^GX)	\arrow[u, "\lambda_{(\Phi^G X)}"]\arrow[r, "\cong"]		&	\Phi^G(\Sigma X)	\arrow[u, "\Phi^G(\lambda_X)"]
\end{tikzcd}
\end{center}
where for an orthogonal $G$-spectrum $Y$ the map $\lambda_Y:\Sigma Y\rightarrow \sh Y$ is defined levelwise as the composite
\[
	S^1\smashp Y(V)\cong Y(V)\smashp S^1\overset{\sigma}{\longrightarrow} Y(V\oplus\br)\cong Y(\br\oplus V).
\]
The left vertical map is a stable equivalence (\cite[III.1.25]{global}) and it follows that the comparison map between shifted spectra is surjective on homotopy. 
In the following we abbreviate notation and write $k+l\regrep$ for the representation $\br^k\oplus (\br^l\otimes \regrep)$. To see injectivity we start with an element $[f:S^{l+n}\ra X(1+ n\regrep)^G]\in\pi_l\Phi^G(\sh X)$ in the kernel and consider the diagram
\[
\begin{tikzcd}[row sep = small, column sep = small]
S^{l+n+1}\arrow[rr, "f\smashp S^1"]\arrow[d, draw = none, "\cong" description, sloped]	&&	X(1+n\regrep)^G\smashp S^1\arrow[r, "\sigma"]	\arrow[d, draw = none, "\cong" description, sloped]		&		X(1+n\regrep+1)^G\arrow[r]\arrow[d, draw = none, "\cong" description, sloped]	
	&	X(1+(n+1)\regrep)^G	\arrow[d, draw = none, "\cong" description, sloped]	\\
S^{l+n+1}\arrow[rr, "f\smashp S^1"]	&&	X(1+n\regrep)^G\smashp S^1\arrow[r, "\sigma"]	\arrow[rd]&	X(1+n\regrep+1)^G\arrow[r]							&	X((1+n)\regrep+1)^G		\\
&&&	X((1+n)\regrep)^G\smashp S^1\arrow[ru]	
\end{tikzcd}
\]
The two vertical isomorphisms on the right side are induced by the isometry that interchanges the outer $\br$ summands,
the other two are given by a degree $-1$ map in the right $\br$ coordinate. This makes the outer squares commute and the middle square commute up to homotopy.
The upper row is the stabilization of $f$ and thus represents the same class in the stable homotopy group.
By further stabilizing if necessary, we may assume that the composite
\[
	S^{l+n}\rightarrow X(1+n\rho_G)^G\rightarrow X((1+n)\rho_G)^G
\]
 and hence the lower row in the diagram is null-homotopic, showing that $[f]$ vanishes.
\end{proof}

\begin{Corollary}[{\cite[3.3.10, 3.3.5]{global}}]\label{Cor:OrbModules}
Let $\cf$ be a family of finite subgroups of a compact Lie group $G$. Geometric fixed points satisfy the following properties:
\begin{itemize}
	\item[(i)]
	A morphism $f:X\rightarrow Y$ of orthogonal $G$-spectra induces isomorphisms $\pi_\ast^H(X)\rightarrow \pi_\ast^H(X)$
	for all $H\in\cf$ if and only if it induces stable equivalences $\Phi^HX\rightarrow \Phi^HY$ for all $H\in \cf$.
	\item[(ii)]
	The homotopy groups $\pi_\ast (\Phi X)$ are $\Ho(\Orb_{G, \fin}^{\times})$-modules,
	that is inner conjugations act trivially.
\end{itemize}
\end{Corollary}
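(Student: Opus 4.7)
The plan is to reduce both claims to the analogous statements for the intrinsically-defined geometric fixed point homotopy groups $\underline{\Phi}_\ast$, for which the relevant results are \cite[3.3.5, 3.3.10]{global}. The natural isomorphism $\pi_\ast(\Phi X)\cong\underline{\Phi}_\ast X$ of $\Ho(\preorbcatinv)$-modules established in the preceding proposition transports these conclusions across verbatim, so no essentially new homotopical input will be required beyond what has already been set up.

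For part (i) the nontrivial direction is that $\Phi^H$-equivalences for $H\in\cf$ suffice to detect $\cf$-equivalences. Under the natural isomorphism this becomes the statement that $f$ is a $\pi^H_\ast$-iso for all $H\in\cf$ iff it is an iso on $\underline{\Phi}^H_\ast$ for all $H\in\cf$, which is \cite[3.3.10]{global}. The argument there is an induction on $|H|$ using the isotropy separation cofiber sequence
\[
	E\mathcal{P}H_+\smashp X \longrightarrow X \longrightarrow \widetilde{E\mathcal{P}H}\smashp X,
\]
where $\mathcal{P}H$ denotes the family of proper subgroups of $H$: the left-hand term is detected on $\pi^K_\ast$ for proper $K<H$ (which is available by induction), while smashing with $\widetilde{E\mathcal{P}H}$ and taking $H$-equivariant homotopy groups computes precisely $\underline{\Phi}^H_\ast X$. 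I would simply cite this and transport the conclusion along our natural isomorphism; the easier converse direction is built into the same inductive machine.

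For part (ii) we must check that for every $H\in\cf$ and every $h\in H$ the conjugation $h^\ast:\Phi^H X\to\Phi^H X$ acts trivially on $\pi_\ast$. Via the natural iso this again reduces to the analogous triviality on $\underline{\Phi}^H_\ast X$, which is \cite[3.3.5]{global}. The underlying reason is already visible at the level of representatives $f:S^{V^H}\to X(V)^H$: the composite $h^\ast=(l^X_{h^{-1}})_\ast\circ(c_h)^\ast$ unravels to the diagonal $H$-action of $h$ on $X(V)$ restricted to $V^H$ and $X(V)^H$, where both are fixed pointwise, so the map is the identity on the nose. The only genuine obstacle in the corollary is the inductive isotropy-separation argument underlying (i), and that work is already done in the cited source; the contribution of this article is precisely the Proposition which makes it applicable to the point-set level spectrum $\Phi X$.
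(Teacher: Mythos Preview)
Your proposal is correct and matches the paper's approach exactly: the Corollary is stated without an explicit proof, carrying only the citation to \cite[3.3.5, 3.3.10]{global}, so it is meant to follow immediately from the preceding Proposition's natural isomorphism $\pi_\ast(\Phi X)\cong\underline{\Phi}_\ast X$ of $\Ho(\preorbcatinv)$-modules together with those cited results. Your elaboration of the isotropy-separation induction and the pointwise triviality of inner conjugation is accurate background, but the paper treats all of this as already contained in the reference.
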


\subsection{Monoidal structure and norm maps}

\subsubsection{Lax monoidal structure}
We now equip the geometric fixed points construction with the structure of a lax symmetric monoidal functor.
We recall that the smash product $X\smashp Y$ of two orthogonal $G$-spectra $X$ and $Y$ comes with a universal bimorphism
$\iota:(X,Y)\rightarrow X\smashp Y$.
Applying $G$-fixed points yields a new natural bimorphism $(\Phi^GX,\Phi^GX)\longrightarrow\Phi^G(X\smashp Y)$
defined at the inner product spaces $V$ and $W$ by
\begin{align*}
	X(\regrep\otimes V)^G\smashp Y(\regrep\otimes W)^G \overset{\iota^G}{\lra}&\ (X\smashp Y)((\regrep\otimes V) \oplus (\regrep\otimes W))^G\\
	\cong&\ (X\smashp Y)(\regrep\otimes (V\oplus W))^G.
\end{align*}
The lax monoidal structure map is the associated natural transformation
\begin{equation}
	\mu^{\phi G}_{X,Y}:(\Phi^GX)\smashp(\Phi^GX)\longrightarrow\Phi^G(X\smashp Y).
\end{equation}
and the inclusion of fixed points \(	S^V\cong S^{(\regrep\otimes V)^G}	\)
defines the \emph{unit map} 
\[
	\eta^{\phi G}:\bs \ra \Phi^G\bs
\]
in each level $V$.
\begin{Proposition}
The maps $\eta^\phi$, $\mu^\phi$ constructed above define a lax symmetric monoidal structure on the geometric fixed point functor
\begin{align*}
\Phi: G\hyph\orthspec \lra \preorbcatinv\hyph\orthspec\text{,}
\end{align*}
where the diagram category $\preorbcatinv\hyph\orthspec$ is equipped with the object-wise symmetric monoidal structure.
\end{Proposition}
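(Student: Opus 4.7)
The plan is to verify the lax symmetric monoidal axioms by exploiting the universal property of $\smashp$. There are two things to check: (a) that $\eta^\phi$ and $\mu^\phi$ are morphisms in the diagram category $\preorbcatinv\hyph\orthspec$, i.e., natural in the $\preorbcatinv$-variable as well as in $(X,Y)$, and (b) that they satisfy the associativity, unit, and symmetry coherences.

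For the coherences (b), I would reduce each of the pentagon, unit triangle, and swap axioms, via the universal property of the smash product, to an equality of bimorphisms out of $(\Phi^HX_1,\dots,\Phi^HX_n)$ into $\Phi^H(X_1\smashp\cdots\smashp X_n)$. Both sides of each axiom are obtained by composing the universal bimorphism of $G$-spectra $(X_1,\dots,X_n)\to X_1\smashp\cdots\smashp X_n$ with $H$-fixed points and with the canonical distributivity isomorphism $\rho_H\otimes(V_1\oplus\cdots\oplus V_n)\cong\bigoplus_i(\rho_H\otimes V_i)$. They agree because $(-)^H$ is strong symmetric monoidal with respect to $\smashp$ and because the distributivity isomorphism itself is coherently associative, unital, and symmetric. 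The unit axiom additionally uses the identification $(\rho_H)^H\cong\br\{N_H\}\cong\br$ that is already built into the definition of $\eta^\phi$.

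For the $\preorbcatinv$-naturality (a), by Proposition \ref{Prop:GfpsFunctoriality} it suffices to check compatibility with the inflation maps $\alpha^*:\Phi^GX\to\Phi^K(\alpha^*X)$ along surjections $\alpha:K\twoheadrightarrow G$, since the conjugation maps factor as an inflation along an isomorphism followed by a morphism of orthogonal $G$-spectra (to which $\Phi$ applies functorially, hence compatibly with $\smashp$ and $\bs$). Compatibility of $\mu^\phi$ with inflation amounts to the commutativity of
\[
\begin{tikzcd}[column sep=large]
\Phi^GX\smashp\Phi^GY\arrow[r,"\mu^{\phi G}"]\arrow[d,"\alpha^*\smashp\alpha^*"'] & \Phi^G(X\smashp Y)\arrow[d,"\alpha^*"]\\
\Phi^K(\alpha^*X)\smashp\Phi^K(\alpha^*Y)\arrow[r,"\mu^{\phi K}"] & \Phi^K(\alpha^*(X\smashp Y))
\end{tikzcd}
\]
and a corresponding triangle for $\eta^\phi$. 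Invoking the universal property of $\smashp$ once more, this further reduces to the commutativity of
\[
\begin{tikzcd}
(\alpha^*\rho_G\otimes V)\oplus(\alpha^*\rho_G\otimes W)\arrow[r,"\cong"]\arrow[d,"\alpha_!\oplus\alpha_!"'] & \alpha^*\rho_G\otimes(V\oplus W)\arrow[d,"\alpha_!\otimes\Id"]\\
(\rho_K\otimes V)\oplus(\rho_K\otimes W)\arrow[r,"\cong"] & \rho_K\otimes(V\oplus W),
\end{tikzcd}
\]
which follows from the linearity of $\alpha_!$ and the naturality of the distributivity isomorphism; the unit case is analogous, using that $\alpha_!$ sends the canonical fixed line of $\alpha^*\rho_G$ into the fixed line of $\rho_K$.

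The hardest part of the argument is really just the bookkeeping: keeping track of the many identifications between inner product spaces of the form $\rho_H\otimes V$ and of their fixed-point subspaces as $H$ and the representations vary. Routing everything through the universal bimorphism formulation makes this uniform and reduces the verification to checking compatibility of the maps $\alpha_!$ with direct sums, which is essentially mechanical.
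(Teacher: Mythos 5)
Your proposal is correct: the paper states this proposition without proof, treating the verification as routine, and your argument supplies exactly the intended check. Reducing the coherence axioms via the universal property of $\smashp$ to the strong monoidality of $(-)^H$ on based spaces and the coherence of the distributivity isomorphism $\rho_H\otimes(V\oplus W)\cong(\rho_H\otimes V)\oplus(\rho_H\otimes W)$, and reducing $\preorbcatinv$-naturality to compatibility of $\alpha_!$ with direct sums (plus the observation that $\alpha_!$ carries the normalized fixed vector $N_G/\sqrt{|G|}$ to $N_K/\sqrt{|K|}$, so the unit identifications match) together with naturality of $\mu^\phi$ applied to the translation map $l^X_{g^{-1}}$, is precisely the bookkeeping the paper leaves to the reader.
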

In fact, geometric fixed points are monoidal up to homotopy.
\begin{Proposition}\label{Prop:GfpMonoidal}
Let $X$ and $Y$ be cofibrant $G$-spectra in the complete model structure.
Then the composition 
\[
	\Phi^G(X)_c\smashp\Phi^G(Y)_c\rightarrow \Phi^G(X)\smashp\Phi^G(Y)\overset{\mu}{\lra}\Phi^G(X\smashp Y)
\]
is a $\pi_\ast$-isomorphism.
\end{Proposition}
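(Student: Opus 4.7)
The plan is to reduce the homotopical monoidality of $\Phi^G$ to the on-the-nose strong monoidality of the Mandell--May monoidal geometric fixed point functor $\Phi^G_M$ recalled in Remark~\ref{rmk:gfpmodels}. By construction $\Phi^G_M$ carries a natural isomorphism
\[
\mu^M_{X,Y}\colon \Phi^G_M(X)\smashp \Phi^G_M(Y)\xrightarrow{\cong} \Phi^G_M(X\smashp Y),
\]
and the natural comparison $\tau_X\colon \Phi^G X\to \Phi^G_M X$ from Remark~\ref{rmk:gfpmodels} is a $\underline\pi_\ast$-isomorphism whenever $X$ is cofibrant in the complete model structure. The first step is to verify that $\tau$ intertwines $\mu$ and $\mu^M$, i.e.\ that the square
\begin{center}
\begin{tikzcd}
\Phi^G(X)\smashp \Phi^G(Y) \arrow[r,"\mu_{X,Y}"]\arrow[d,"\tau\smashp \tau"'] & \Phi^G(X\smashp Y)\arrow[d,"\tau_{X\smashp Y}"] \\
\Phi^G_M(X)\smashp \Phi^G_M(Y) \arrow[r,"\mu^M_{X,Y}","\cong"'] & \Phi^G_M(X\smashp Y)
\end{tikzcd}
\end{center}
commutes; both $\mu$ and $\mu^M$ are assembled from applying $G$-fixed points to the universal bimorphism of $X\smashp Y$ combined with the identification $(\regrep\otimes V)\oplus (\regrep\otimes W)\cong \regrep\otimes(V\oplus W)$, so the commutativity is a straightforward unwinding of the universal property of the left Kan extension defining $\Phi^G_M$.

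Since the complete model structure is symmetric monoidal, $X\smashp Y$ is again cofibrant, and hence all three comparison maps $\tau_X$, $\tau_Y$, $\tau_{X\smashp Y}$ are $\underline\pi_\ast$-isomorphisms. I would then consider the diagram
\begin{center}
\begin{tikzcd}[column sep=small]
\Phi^G(X)_c\smashp \Phi^G(Y)_c \arrow[r] \arrow[d] & \Phi^G(X)\smashp \Phi^G(Y)\arrow[r,"\mu"]\arrow[d,"\tau\smashp \tau"'] & \Phi^G(X\smashp Y)\arrow[d,"\tau"] \\
\Phi^G_M(X)_c\smashp \Phi^G_M(Y)_c \arrow[r] & \Phi^G_M(X)\smashp \Phi^G_M(Y)\arrow[r,"\cong"] & \Phi^G_M(X\smashp Y),
\end{tikzcd}
\end{center}
where the left vertical map is obtained by lifting $\tau_X$ and $\tau_Y$ through the acyclic fibrations $\Phi^G_M(X)_c\to \Phi^G_M(X)$ and $\Phi^G_M(Y)_c\to \Phi^G_M(Y)$, using cofibrancy of the sources. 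The non-equivariant smash product is homotopical on cofibrant orthogonal spectra, so both horizontal maps in the left column and the left vertical map are $\underline\pi_\ast$-isomorphisms. The bottom composite is a stable equivalence by the strong monoidality of $\Phi^G_M$ on cofibrant inputs, so a two-out-of-three argument yields the claim for the top composite.

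The main obstacle is the compatibility verification for $\tau$ in the first step: although morally forced by the universal properties, tracking it through the explicit point-set formulas for $\mu^{\phi G}_{X,Y}$ requires a careful diagram chase involving the identifications $(\regrep)^G\cong \br$ and the embedding $\regrep\otimes(-)\colon \orth\to \orth_G$ simultaneously with the $G$-fixed points of the universal bimorphism. Once this compatibility is in place, the rest of the proof is a routine homotopical bootstrap from the strong monoidal model provided by $\Phi^G_M$.
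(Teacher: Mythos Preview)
Your approach is correct and is essentially the same as the paper's: the paper also reduces to the monoidal geometric fixed point functor $\Phi^G_M$ via the comparison map of Remark~\ref{rmk:gfpmodels}, merely observing that this comparison is a lax monoidal transformation (your commuting square) and citing the corresponding monoidality statement for $\Phi^G_M$ from \cite[B.199]{kervaire}. One small caveat: $\mu^M$ is in general only a lax monoidal structure map, not an on-the-nose isomorphism, and \cite[B.199]{kervaire} supplies precisely that it is a weak equivalence on cofibrant inputs---but this is all your argument actually uses.
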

\begin{proof}
This follows from the corresponding statement for the monoidal geometric fixed point functor \cite[B.199]{kervaire} and the observation that the comparison map of Remark \ref{rmk:gfpmodels} is a lax monoidal transformation.
\end{proof}
\subsubsection{Norm maps}

We have seen that geometric fixed points are lax symmetric monoidal and hence send commutative ring spectra to
to commutative ring spectra. But there is more structure available in the form of norm maps, which we will construct in this section.

We begin by recalling the $n$-fold \emph{wreath product} $\Sigma_n\wr G$ of a finite group $G$ for $n\in \bn$. This is the semi-direct product of the $n$-fold product $G^{\times n}$ and the symmetric group $\Sigma_n$ with respect to the right action of $\Sigma_n$ by permuting the factors. Concretely, elements are given by tuples $(\sigma ; g_1, \ldots, g_n)\in \Sigma_n\times G^{\times n}$ with multiplication
\[
(\sigma ; g_1, \ldots, g_n)\cdot(\tau ; k_1, \ldots, k_n)=(\sigma\circ\tau ; g_{\tau(1)}\cdot k_1, \ldots, g_{\tau(n)}\cdot k_n)\text{.}
\]
Wreath products naturally act from the left on coproducts of $G$-objects in any category and in the case of $G$-sets this gives an identification
\[
\Sigma_n\wr G\overset{\cong}{\lra} \Aut_G(G\sqcup\ldots\sqcup G)
\]
as the group of right $G$-equivariant automorphisms.

Next we recall that the \emph{norm} $N_H^GX$ of an $H$-spectrum $X$ is the 'indexed smash product' $\bigsmashp_{G/H}X$ with induced $G$-action \cite[2.2.3]{kervaire}.
To be more precise, we choose an (ordered) $H$-basis $b=(g_1,\ldots, g_m)$ of $G$, that is, a complete set of representatives for the right $H$-cosets in $G$. 
This is the same as the choice of a right $H$-equivariant isomorphism $\coprod_{i=1}^{m} H\overset{\cong}{\lra} G$.
As the $H$-automorphism group of $\sqcup H$, the wreath product $\Sigma_m\wr H$ acts freely and transitively from the right on the set of $H$-bases and this commutes with the $G$-action by left translation. So the above choice of an $H$-basis determines a group homomorphism $\Psi_b:G\ra \Sigma_m\wr H$ such that the square
\[
\begin{tikzcd}[column sep = large, row sep = large]
\coprod_{i=1}^m H	\arrow[rr, "{(g_1,\ldots, g_n)}"]\arrow[d, "\Psi_b(g)"]		&&	G	\arrow[d, "g\cdot"]	\\
\coprod_{i=1}^m H	\arrow[rr, "{(g_1,\ldots, g_n)}"]						&&	G
\end{tikzcd}
\]
commutes. In particular, $\Psi_b$ only depends up to congjugacy on $b$.
The norm is now defined as the $m$-fold smash power
\begin{equation}\label{eqn:norm}
	N_H^GX=\Psi_b^{\ast}(X^{\smashp m})
\end{equation}
with $G$-action obtained by restriction along the homomorphism $\Psi_b:G\ra \Sigma_m\wr H$.
The choice of coset representatives will usually be kept implicit in the notation.

We also recall that on commutative ring spectra, the norm functor
\[
	N_H^G:\Com(H\hyph\orthspec)\rightleftarrows\Com(G\hyph\orthspec):\res_H^G
\]
is left adjoint to restriction with counit defined by multiplying together the smash-factors.
\begin{Definition}\label{def:norms}
Let $H\leq G$ be a subgroup inclusion of index $m=(G:H)$. The \emph{norm map} is the natural transformation
\[
	N_H^G:\Phi^HX\lra\Phi^G(N_H^GX)
\]
defined for an $H$-orthogonal spectrum $X$ in level $V$ as the composition
\begin{align*}
	X(\regrep[H]\otimes V)^H\lra(X(\regrep[H]\otimes V)^{\smashp m})^{\Sigma_m\wr H}
	&\lra ((X^{\smashp m})((m\cdot\regrep[H])\otimes V))^{\Sigma_m\wr H}\\
	&\lra ((\Psi_b^\ast X^{\smashp m})(\regrep\otimes V))^G.
\end{align*}
The first map is the diagonal inclusion into the $m$-fold smash product, the second map is the iterated universal bimorphism
\[
	\iota_{W,\ldots,W}:X(W)\smashp\cdots\smashp X(W)\rightarrow (X^{\smashp m})(W\oplus\cdots\oplus W),
\]
and the last map is induced by the linear isometry $m\cdot\rho_H\cong \rho_G$ obtained by linearizing the choice of coset representative $\coprod_{i=1}^{m}H\cong G$ implicit in the definition of the norm.

For a commutative $G$-orthogonal ring spectrum $R$, we define a norm map
\[
	N_H^G:\Phi^HR\ra\Phi^G(N_H^G(\res^G_HR))\rightarrow \Phi^G R.
\]
as the composition with the counit of the the norm-restriction adjunction, which does not depends of the choice of cosets.
\end{Definition}

\begin{Proposition}
The norm maps satisfy the following relations, where the choice of coset representatives on the left hand side determines the one on the right hand side:
\begin{enumerate}
\item
They are transitive in subgroup inclusions. The equality
\[
N_K^G\circ N_H^K=N_H^G
\]
holds for all nested subgroup inclusions $H\leq K \leq G$.
\item
They commute with inflations. Let $\alpha:G\twoheadrightarrow K$ be a surjective group homomorphism and $H\leq K$ a subgroup.
We set $L=\alpha^{-1}(H)$, so that there is the commutative square
\[
	\begin{tikzcd}[column sep = large]
		G	\arrow[r, twoheadrightarrow, "\alpha"]													& 		K					\\
		L	\arrow[r, twoheadrightarrow, "\alpha_{\vert L}"]\arrow[u, sloped, draw=none, "\leq" description]		&		H	\arrow[u, sloped, draw=none, "\leq" description]	
	\end{tikzcd}
\]
Then the equality
\[
N_L^G\circ (\alpha_{\vert L})^\ast=\alpha^\ast\circ N_H^K
\]
holds.
\end{enumerate}
\end{Proposition}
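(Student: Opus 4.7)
The plan is to verify each relation by tracing through the three constituent steps in the construction of the norm map---the diagonal inclusion, the iterated universal bimorphism, and the reparametrisation along an isometry of regular representations---and showing that with compatible choices of coset representatives on the two sides the individual steps agree. The first two steps are formal, relying on associativity of the diagonal and the universal property of iterated bimorphisms in the symmetric monoidal category of orthogonal spectra, so the substantive content lies in matching the linear isometries together with the action homomorphisms $\Psi_b$ encoding the $G$-action.

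For transitivity, I would choose a $K$-basis $(g_1,\ldots,g_m)$ of $G$ and an $H$-basis $(k_1,\ldots,k_n)$ of $K$ on the left-hand side, and use the lexicographically ordered $H$-basis $(g_ik_j)_{i,j}$ of $G$ on the right-hand side. With these compatible choices both norms collapse to the $mn$-fold smash power $X^{\smashp mn}$, the $G$-actions agree via the composite
\[
\Psi_{gk}:G\xrightarrow{\Psi_g}\Sigma_m\wr K\xrightarrow{\Sigma_m\wr\Psi_k}\Sigma_m\wr(\Sigma_n\wr H)\cong \Sigma_{mn}\wr H,
\]
and the iterated isometry $mn\cdot\regrep[H]\cong m\cdot\regrep[K]\cong\regrep[G]$ sends the $(i,j)$-th summand to the $g_ik_j$-translate, which is precisely the isometry used directly in $N_H^G$. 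The diagonal and bimorphism steps are then straightforward to match.

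For the inflation-norm compatibility, I would fix an $L$-basis $(\tilde k_1,\ldots,\tilde k_n)$ of $G$ on the left; since $\alpha^{-1}(k_iH)=\tilde k_iL$ and $[G:L]=[K:H]$, the images $(\alpha(\tilde k_i))$ form an $H$-basis of $K$ to be used on the right. The resulting action homomorphisms fit into the commutative square
\[
\begin{tikzcd}
G\arrow[r,"\Psi_{\tilde k}"]\arrow[d,"\alpha"']	&	\Sigma_n\wr L\arrow[d,"\Sigma_n\wr\alpha_{\vert L}"]	\\
K\arrow[r,"\Psi_{\alpha(\tilde k)}"]				&	\Sigma_n\wr H
\end{tikzcd}
\]
which yields an identification $\alpha^\ast(N_H^KX)\cong N_L^G((\alpha_{\vert L})^\ast X)$ as $G$-spectra, so that the diagonal and bimorphism steps in the two norm maps commute with the inflation by naturality of these constructions.

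The main obstacle is then to verify the compatibility of the two isometries. Under the identifications $n\cdot\regrep[H]\cong\regrep[K]$ and $n\cdot\regrep[L]\cong\regrep[G]$ induced by the chosen bases, the embedding $\alpha_!:\alpha^\ast\regrep[K]\hookrightarrow\regrep[G]$ appearing in the inflation map must correspond to the $n$-fold direct sum of $(\alpha_{\vert L})_!:(\alpha_{\vert L})^\ast\regrep[H]\hookrightarrow\regrep[L]$ appearing in the norm map. This reduces to the set-theoretic equality $\alpha^{-1}(\alpha(\tilde k_i)h)=\tilde k_i\cdot(\alpha_{\vert L})^{-1}(h)$ for $h\in H$, combined with the fact that $\ker\alpha=\ker\alpha_{\vert L}$, which ensures that the normalising factors $1/\sqrt{\vert\ker\alpha\vert}$ agree on both sides.
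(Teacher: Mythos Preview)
Your proposal is correct and follows precisely the approach the paper indicates: the paper's own proof reads in full ``Although it is lengthy and involves many commutative diagrams, the proof amounts to spelling out the definitions. We omit the details.'' You have supplied those omitted details, with the expected compatible choices of coset representatives and the careful bookkeeping of action homomorphisms and isometries, so there is nothing to compare beyond noting that you have actually carried out what the paper only gestures at.
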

\begin{proof}
Although it is lengthy and involves many commutative diagrams, the proof amounts to spelling out the definitions. We omit the details.
\end{proof}

\begin{Definition}\label{Def:PreOrbCat}
Let $G$ be a compact Lie group. We denote by $\preorbcat$ the topological category with objects the finite subgroups of $G$ and morphisms $g:H\rightarrow K$ those elements of $G$ such that $H^g\leq K$.
It comes with a functor
\[
	\pi:\preorbcat\rightarrow\orbcat,\quad H\mapsto G/H
\]
to the orbit category that sends a morphism $g:H\rightarrow K$ to
\[	
	G/H\overset{\cdot g^{-1}}{\longrightarrow}G/H^g\rightarrow G/K.
\]
\end{Definition}

\begin{Corollary}
The geometric fixed points construction defines a functor
\[
	\Phi:\Com(G\hyph\orthspec)\rightarrow \preorbcat\hyph\Com(\orthspec),
\]
where the subgroup inclusion $H\leq G$ encodes the associated norm map.
\end{Corollary}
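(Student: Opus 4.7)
The plan is to assemble three ingredients already in hand: the lax symmetric monoidal structure on $\Phi$, which when applied to a commutative $R$ upgrades the $\preorbcatinv$-functor $H\mapsto\Phi^HR$ to one valued in $\Com(\orthspec)$; the norm maps $N_H^K$ of Definition \ref{def:norms} for each subgroup inclusion $H\leq K$; and the transitivity and inflation-compatibility relations of the preceding Proposition. The task is to show these combine into a single functor out of the larger category $\preorbcat$, which encodes both conjugations and subgroup inclusions.

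First I would verify that for a commutative ring spectrum $R$ each norm $N_H^K\colon \Phi^HR\to \Phi^KR$ is itself a morphism of commutative ring spectra. By construction it factors as
\[
\Phi^HR\lra \Phi^K(N_H^K\res^K_HR)\xrightarrow{\Phi^K(\epsilon)}\Phi^KR,
\]
where $\epsilon$ is the counit of the norm--restriction adjunction on $\Com$. The second factor is multiplicative since $\epsilon$ is a ring map by construction of that adjunction and $\Phi^K$ is lax symmetric monoidal. For the first factor, commutativity of $R$ is exactly what makes the iterated diagonal $R\to R^{\smashp m}$ a map of commutative ring spectra; combined with the bimorphism compatibility of the lax structure $\mu^{\phi K}$ and the $\Sigma_m\wr H$-equivariance built into the definition, this gives multiplicativity on the nose.

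Next I define the functor on morphisms: every $g\colon H\to K$ in $\preorbcat$ factors canonically as $H\xrightarrow{g}H^g\hookrightarrow K$, namely a conjugation isomorphism in $\preorbcatinv$ followed by a subgroup inclusion, and I set $\Phi^g := N_{H^g}^K\circ g^*$. Functoriality on a composite in $\preorbcat$ then reduces, after invoking transitivity of norms and the composition rule for conjugations already established, to a single compatibility identity between conjugations and norms of the shape
\[
h^*\circ N_{H^g}^K = N_{H^{gh}}^{K^h}\circ h^*,
\]
where $h\in G$ and the two occurrences of $h^*$ refer to the conjugations on the appropriate geometric fixed points. This is of exactly the same nature as the inflation--norm compatibility already proved: the conjugation $h^*$ is assembled from inflation along an inner isomorphism together with left translation by $h^{-1}$, and both constituents commute in the required way with the bimorphisms and isometry identifications that define the norm, so the identity follows by the identical diagrammatic unpacking. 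Naturality in $R$ is automatic since every constituent transformation was defined naturally in its spectrum argument. The main obstacle is really the ring-map property of $N_H^K$: the defining diagonal is not multiplicative for a general spectrum, and one has to trace through the wreath-product equivariance to see that commutativity of $R$ forces multiplicativity.
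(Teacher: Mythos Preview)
Your proof is correct and follows the approach the paper implicitly intends. The paper itself gives no proof of this Corollary, treating it as an immediate consequence of the preceding Proposition on transitivity and inflation-compatibility of norms; you have simply written out the details that the paper suppresses, including the verification that the norm maps are ring morphisms and the reduction of functoriality to the conjugation--norm compatibility, which is indeed a special case of the inflation--norm relation already established.
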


\begin{Proposition}\label{Prop:GfpNorm}
Let $G$ be a finite group. For a cofibrant orthogonal $H$-spectrum $X$ in the HHR-model structure, the norm map
\[
	N_H^G:\Phi^HX\overset{\simeq}{\lra}\Phi^G(N_H^GX)
\]
is a $\pi_\ast$-isomorphism.
\end{Proposition}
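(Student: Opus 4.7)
The plan is to reduce the claim to the analogous statement for the Mandell-May monoidal geometric fixed point functor $\Phi_M$, where the diagonal isomorphism $\Phi_M^G \circ N_H^G \cong \Phi_M^H$ on sufficiently cofibrant inputs is classical (see e.g.\ \cite[B.209]{kervaire}). The comparison transformation $\Phi \Rightarrow \Phi_M$ of Remark \ref{rmk:gfpmodels} is a $\pi_\ast$-isomorphism on complete-cofibrant spectra, so once we match our norm map with the diagonal isomorphism on the $\Phi_M$-side, the result follows from $2$-out-of-$3$.

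First, I would construct a point-set level norm map for $\Phi_M$ by the same recipe as in Definition \ref{def:norms} (diagonal inclusion, iterated universal bimorphism, and restriction along $\Psi_b$), using that $\Phi_M$ is also lax symmetric monoidal. By naturality of the bimorphism and the fact that the comparison map $\Phi \Rightarrow \Phi_M$ is a lax symmetric monoidal transformation, one obtains a commutative square
\[
\begin{tikzcd}[row sep = large, column sep = large]
\Phi^H X \arrow[r, "N_H^G"]\arrow[d]	&	\Phi^G(N_H^G X) \arrow[d] \\
\Phi^H_M X \arrow[r, "N_H^G"]		&	\Phi^G_M(N_H^G X).
\end{tikzcd}
\]
For HHR-cofibrant $X$ the left vertical map is a $\pi_\ast$-isomorphism by Remark \ref{rmk:gfpmodels}, and since the norm functor $N_H^G$ preserves the relevant cofibrancy (it is a left Quillen functor in the HHR model structure on commutative ring spectra, and more generally its restriction along $\Psi_b$ behaves well with respect to the genuine cofibrations indexed by graph subgroups), the right vertical map is also a $\pi_\ast$-isomorphism.

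Next I would identify the bottom horizontal arrow with the HHR diagonal isomorphism $\Phi^H_M X \cong \Phi^G_M(N_H^G X)$ of cofibrant spectra. Using the diagrammatic description of $\Phi_M$ as a left Kan extension along $(-)^G:\orth_G \to \orth$ (Remark \ref{rmk:gfpmodels}), the norm map becomes, in each level $V$, the composite induced by the linear isometry $m\cdot\rho_H \cong \rho_G$ implementing the chosen $H$-basis of $G$, followed by the iterated bimorphism into $X^{\smashp m}$. This is exactly the formula for the HHR diagonal, whose source identification uses the tautological decomposition $(m\cdot\rho_H)^G \cong (\rho_H)^H$ via the $H$-basis. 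Thus the diagonal isomorphism matches our norm map up to canonical identifications. Applying $2$-out-of-$3$ to the square concludes.

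The main obstacle is the bookkeeping in the second step: making explicit the identification of the norm map defined in Definition \ref{def:norms} with the HHR diagonal. This is not conceptually deep but requires carefully tracking the $\Sigma_m \wr H$-equivariance of the iterated bimorphism, the linearized coset-representative isometry $m\cdot\rho_H \cong \rho_G$, and the corresponding manipulation of fixed points of $(\Psi_b^\ast X^{\smashp m})(\rho_G\otimes V)$. Once this identification is in place, the proposition is an immediate consequence of the two-out-of-three property applied to the commutative square above.
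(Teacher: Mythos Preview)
Your proposal is correct and takes essentially the same approach as the paper, whose proof consists of a single sentence citing \cite[B.209]{kervaire} for the HHR diagonal on $\Phi_M$ together with the comparison of Remark~\ref{rmk:gfpmodels}. You have simply unpacked how these two ingredients combine---via the commutative square and the two-out-of-three argument---including the bookkeeping identification of the norm map of Definition~\ref{def:norms} with the HHR diagonal, all of which the paper leaves implicit.
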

\begin{proof}
This is \cite[B.209]{kervaire} combined with the identification at the end of Remark \ref{rmk:gfpmodels}.
\end{proof}


\subsection{Review of $\infty$-categories}
We will freely use the language of homotopical algebra and of $\infty$-categories as developed in terms of quasicategories in \cite{htt}.
Here we just review a few notions and fix notation.
The set of morphisms between two objects $x$ and $y$ in the homotopy category $\Ho(\cc)$ of a quasicategory $\cc$ will be denoted by $[x,y]_\cc$
and we write $\Map_\cc(x,y)$ for the corresponding mapping space. Since we do not need the details of a specific model it will suffice to think about this invariantly as a functor
$\Map_\cc(-,-):\cc^{\op}\times\cc\rightarrow \mathcal S$ into the $\infty$-category of spaces together with a natural identification $\pi_0\Map_\cc(x,y)\cong [x,y]_\cc$.
Additionally, for topological categories $\cc$ there is a functorial identification
\begin{equation}\label{eqn:TopologicalMappingSpaces}
	\Map_{{\Ncoh}\cc}(x,y)\simeq \Map_\cc(x,y)
\end{equation}
of the mapping space of the homotopy coherent nerve ${\Ncoh}\cc$ (or topological nerve, see \cite[1.1.5]{htt}) with the strict mapping space in $\cc$.
For two quasicategories $\cc$ and $\cd$, the functor $\infty$-category $\Fun(\cc,\cd)=\underline\Hom_{\operatorname{ sSet}}(\cc,\cd)$
is simply the internal hom in simplicial sets.
We recall that a colimit of a functor $F:I\rightarrow\cc$ is a universal \emph{cocone} on $F$,
i.e.\ a functor $\bar F:I^\triangleright\rightarrow\cc$ extending $F$, where $I^\triangleright=I\ast\Delta^0$ is the join with $\Delta^0$.
Following the usual abuse of notation we will simply write $\colim_I F$ for the value at the cone point
and suppress the additional data from the notation. 
An adjunction $F:\cc\rightleftarrows \cd:G$ will just consist of a pair of functors together with unit and counit transformations
satisfying the triangle equalities up to equivalence, c.f.\ \cite[5.2.2.8]{htt}.

\subsubsection{Kan extensions}
Let $f:\cc \rightarrow \cd$ be a functor and $\mathcal M$ an $\infty$-category with sufficiently many colimits,
i.e.\ those appearing in the formulas below. Then the restriction functor 
\[
	f^\ast:\Fun(\cd, \cm)\rightarrow \Fun(\cc,\cm)
\]
admits a left adjoint $f_!$ given by \emph{left Kan extension} \cite[4.3.3.7]{htt}.
It will later be important that the pointwise formula also holds in the $\infty$-categorical setting.
More precisely, the unit transformation $\eta:\Id \rightarrow f^\ast f_!$ exhibits the value
\begin{equation}\label{eqn:PointwiseFormula}
	(f_! X)(d)\simeq \colim_{\cc_{/d}}X
\end{equation}
at an object $d\in \cd$ as a colimit over the slice category $\cc_{/d}={\cc\times_{\cd}\cd_{/d}}$ \cite[4.3.2.2, 4.3.3.2]{htt}.
We will also need the following technical fact:
\begin{Lemma}\label{lemma:KanExtensionCocartesian}
Let $p:\cc\rightarrow \cd$ be a cocartesian fibration of quasicategories. 
Then the left Kan extension can be computed at an object $d\in \cd$ as the colimit
\[
	(p_!X)(d)\simeq \colim_{p^{-1}(d)}X
\]
over the fiber category $p^{-1}(d)$.
\end{Lemma}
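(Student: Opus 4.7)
The plan is to deduce the lemma from the pointwise formula~(\ref{eqn:PointwiseFormula}) by showing that the inclusion
\[
\iota: p^{-1}(d) \hookrightarrow \cc_{/d}, \qquad c \mapsto (c, \mathrm{id}_d),
\]
is a final (cofinal) functor of $\infty$-categories. Since $X_{|\cc_{/d}} \circ \iota = X_{|p^{-1}(d)}$ and~(\ref{eqn:PointwiseFormula}) gives $(p_! X)(d) \simeq \colim_{\cc_{/d}} X$, finality of $\iota$ produces the desired equivalence $(p_! X)(d) \simeq \colim_{p^{-1}(d)} X$.

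To establish finality I invoke Lurie's criterion (\cite[4.1.3.1]{htt}): it suffices to check that for each object $(c,\alpha: p(c) \to d)$ in $\cc_{/d}$, the fiber product
\[
p^{-1}(d) \times_{\cc_{/d}} (\cc_{/d})_{(c,\alpha)/}
\]
is weakly contractible. Unwinding the definitions, an object of this $\infty$-category is a pair $(c', \beta)$ where $c'\in \cc$ satisfies $p(c')=d$ and $\beta: c \to c'$ is a morphism in $\cc$ whose image under $p$ equals $\alpha$.

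The key step is the following: choose a $p$-cocartesian lift $\tilde\alpha: c \to c_\alpha$ of $\alpha$, which exists by hypothesis. Then $(c_\alpha, \tilde\alpha)$ is an object of the fiber product, and I claim it is initial. Indeed, by the defining universal property of cocartesian edges, for any $(c', \beta)$ the space of morphisms $c_\alpha \to c'$ in $\cc$ lying over $\mathrm{id}_d$ whose pre-composition with $\tilde\alpha$ recovers $\beta$ (up to coherent homotopy) is contractible. This mapping space is by construction the space of morphisms $(c_\alpha, \tilde\alpha) \to (c', \beta)$ in the fiber product, so $(c_\alpha, \tilde\alpha)$ is initial and the $\infty$-category is contractible.

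The main technical obstacle is essentially bookkeeping: correctly identifying mapping spaces in the iterated slice/fiber product and invoking the precise form of the cocartesian universal property (as pullback of over-categories, \cite[2.4.1]{htt}) to conclude contractibility of the mapping space to a general $(c', \beta)$. Once this is in place, the remainder of the argument is formal.
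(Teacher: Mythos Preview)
Your argument is correct: you verify cofinality of $\iota:p^{-1}(d)\hookrightarrow\cc_{/d}$ directly via Quillen's Theorem~A (\cite[4.1.3.1]{htt}), exhibiting a cocartesian lift $(c_\alpha,\tilde\alpha)$ as an initial object in each comma category; the contractibility of the relevant mapping spaces is precisely the characterisation of cocartesian edges in \cite[2.4.4.3]{htt}. The paper takes a different and more packaged route: it observes that cocartesian fibrations are \emph{smooth} in the sense of \cite[4.1.2.15]{htt}, and that pullback along smooth maps preserves cofinal functors \cite[4.1.2.10]{htt}; since the inclusion $\{d\}\hookrightarrow\cd_{/d}$ of the terminal object is cofinal, its pullback $p^{-1}(d)\to\cc_{/d}$ along $p$ is as well. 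Your approach has the advantage of being self-contained and making the role of the cocartesian lifting property explicit, at the cost of some bookkeeping with iterated slices; the paper's argument is a two-line citation but invokes the somewhat heavier machinery of smooth maps.
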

\begin{proof}
Cocartesian fibrations are smooth \cite[4.1.2.15]{htt} and hence pullback (in simplicial sets) along them preserves cofinal functors \cite[4.1.2.10]{htt}.
This shows that for every object $d\in \cd$ the functor
\(
	p^{-1}(d)=\cc\times_\cd\{d\}\rightarrow \cc\times_\cd \cd_{/d}
\)
is cofinal.
\end{proof}

\subsubsection{Stability}
We recall that an $\infty$-category $\cc$ is \emph{pointed} if it has a zero object, i.e.\ an object that is both initial and terminal.
If morphisms in $\cc$ admit cofibers, the suspension functor $\Sigma:\cc\rightarrow \cc$ exists together with a natural equivalence
\[
	\Map_\cc(\Sigma X,Y)\simeq \Omega\Map_\cc(X,Y),
\]
which also uniquely determines it. Then $\cc$ is called \emph{stable} if it is pointed, admits finite colimits, and the suspension functor is an equivalence \cite[1.4.2.27]{ha}.
A functor between stable $\infty$-categories is \emph{exact} if it preserves zero objects and commutes with suspensions.

\begin{Lemma}\label{lemma:exactequivalence}
Let $F:\cc\rightarrow\cd$ be an exact functor between stable $\infty$-categories.
Then $F$ is an equivalence if and only if the induced functor $\Ho(F)$ on homotopy categories is an equivalence.
\end{Lemma}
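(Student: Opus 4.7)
The plan is to verify that $F$ is both essentially surjective and fully faithful as a functor of $\infty$-categories, which by the standard criterion (\cite[\S 1.2.10]{htt}) implies it is an equivalence. Essential surjectivity is immediate: if $\Ho(F)$ is essentially surjective, then for every $Y\in\cd$ there is some $X\in\cc$ together with an isomorphism $F(X)\to Y$ in $\Ho(\cd)$, and any such isomorphism is represented by an equivalence in $\cd$.

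For fully faithfulness, the strategy is to exploit stability to reduce the equivalence of mapping spaces to a statement about hom sets in the homotopy categories. The defining property of suspension in a stable $\infty$-category provides natural equivalences $\Map_\cc(\Sigma^n X,Y)\simeq \Omega^n\Map_\cc(X,Y)$ for every $n\geq 0$, so $\pi_n\Map_\cc(X,Y)\cong [\Sigma^n X,Y]_\cc$. Since $F$ is exact, $F(\Sigma^n X)\simeq \Sigma^n F(X)$, and combined with fully faithfulness of $\Ho(F)$ this yields bijections $[\Sigma^n X,Y]_\cc\to [\Sigma^n F(X),F(Y)]_\cd$ for every $n\geq 0$. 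Consequently the map of mapping spaces $\Map_\cc(X,Y)\to \Map_\cd(F(X),F(Y))$ is a $\pi_\ast$-isomorphism on all non-negative homotopy groups.

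To conclude, I would invoke that mapping spaces in a stable $\infty$-category are connective infinite loop spaces (and in particular grouplike), so their homotopy type is determined by non-negative homotopy groups and the above map is thus a weak equivalence. This gives fully faithfulness and completes the proof. There is no genuine obstacle here: the whole argument rests on the observation that in the stable setting every higher homotopical invariant of mapping spaces is extracted from hom sets after suspending the source, so fully faithfulness on homotopy categories automatically propagates to the $\infty$-categorical level. The only place one needs to be attentive is the compatibility of $F$ with $\Sigma$, which is precisely the content of exactness.
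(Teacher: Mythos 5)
Your argument is correct and follows essentially the same route as the paper: essential surjectivity is immediate, and fully faithfulness is obtained by identifying $\pi_n\Map_\cc(X,Y)$ with $[\Sigma^n X,Y]_\cc$, using exactness to compare $F(\Sigma^n X)$ with $\Sigma^n F(X)$, and then using that the mapping spaces are grouplike (H-spaces) to conclude that a $\pi_\ast$-isomorphism at the basepoint is a weak equivalence — exactly the paper's closing observation about H-spaces and basepoints.
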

\begin{proof}
Let $\Ho(F)$ be an equivalence. Then $F$ is also essentially surjective (by definition) and it remains to shows that it is fully faithful.
For every functor $F$ of pointed $\infty$-categories we have a commutative diagram
\begin{center}
	\begin{tikzcd}
		\pi_k\Map_\cc(x,y)\arrow[rr]\arrow[d, sloped, phantom, "{\cong}" description]	&&	\pi_k\Map_\cd(Fx,Fy)\arrow[d, sloped, phantom, "{\cong}" description]		\\
		\left[\Sigma^kx,y\right]\arrow[r]		&	\left[F(\Sigma^kx),Fy\right]	\arrow[r]	&	\left[\Sigma^kFx,Fy\right]
	\end{tikzcd}
\end{center}
in which the lower right map is induced by precomposition with $\Sigma^k(Fx)\rightarrow F(\Sigma^kx)$.
It follows that the upper map is an isomorphism if $F$ commutes with suspensions and the induced functor $\Ho(F)$ is fully faithful. 
If, in addition $\cc$ and $\cd$ are stable, then the mapping spaces are $H$-spaces and $\Map_\cc(x,y)\rightarrow\Map_{\cd}(Fx,Fy)$ 
induces isomorphisms on homotopy groups for all basepoints.
\end{proof}

The homotopy category $\Ho(\cc)$ of a stable $\infty$-category is canonically triangulated \cite[1.1.2.14]{ha}, with triangles represented by cofiber sequencs.
An object $C$ in a triangulated category with arbitrary sums is called \emph{compact}
if it corepresents a sum preserving functor $[C,-]_{\mathcal T}:\mathcal T\rightarrow \operatorname{Ab}$
and a set $\mathcal C$ of compact objects is a set of \emph{compact generators} if mapping out of $\cc$ detects zero objects.
We will make use of the following well-known fact from stable morita theory, whose proof is a standard localizing subcategory argument.

\begin{Proposition}\label{prop:eqstablecats}
Let $F:\cc\rightarrow \cd$ be an exact functor between stable $\infty$-categories that admit all sums.
Suppose that $F$ takes a set of compact generators $M$ to a set of compact generators
and that it is fully faithful when restricted to the full subcategory spanned by all suspensions
of objects in $M$. Then $F$ is an equivalence.
\end{Proposition}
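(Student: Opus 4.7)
The plan is to carry out the standard localizing subcategory argument, first reducing to the homotopy category. By Lemma \ref{lemma:exactequivalence} it suffices to show that the induced triangulated functor $\Ho(F)\colon \Ho(\cc)\rightarrow \Ho(\cd)$ is an equivalence; in what follows I write $[x,y]$ for morphisms in the relevant homotopy category.

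For fully faithfulness, fix a compact object $x=\Sigma^k m$ for some $m\in M$ and $k\in\bz$, and consider the full subcategory $\cc_x\subseteq \cc$ consisting of those $y$ for which $[x,y]\rightarrow [Fx,Fy]$ is a bijection. Exactness of $F$ together with the five-lemma applied to the long exact sequences of $[x,-]$ and $[Fx,F(-)]$ associated to a cofiber sequence in $\cc$ and its image in $\cd$ shows that $\cc_x$ is closed under cofibers; closure under shifts is immediate. Since both $x$ and $Fx$ are compact, the functors $[x,-]$ and $[Fx,-]$ commute with arbitrary sums, and together with the compatibility of $F$ with sums this implies that $\cc_x$ is also closed under sums. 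Thus $\cc_x$ is a localizing subcategory of $\cc$ containing the compact generators $M$ by hypothesis, and hence $\cc_x=\cc$. Running the symmetric localizing subcategory argument in the first variable, with $y\in\cc$ now fixed, then yields that $[x,y]\rightarrow [Fx,Fy]$ is an isomorphism for all $x,y\in\cc$.

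For essential surjectivity, the essential image of $F$ in $\cd$ is closed under suspensions and cofibers (since $F$ is exact) and under sums, so forms a localizing subcategory of $\cd$. Containing the set $F(M)$ of compact generators, it must agree with all of $\cd$.

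The only step beyond formal manipulation is the compatibility of $F$ with arbitrary direct sums, needed to ensure $\cc_x$ and the essential image are closed under sums: this is implicit in the compactly generated setting, where every object is a filtered colimit of compacts and an exact functor that sends a set of compact generators to compact generators is effectively pinned down by its restriction to the compact subcategory $\cc^c$. In practice I would either fold this into the hypotheses or verify it separately in the applications at hand.
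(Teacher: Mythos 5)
Your write-up is the standard localizing-subcategory argument, which is exactly what the paper intends: it gives no details beyond calling the proof "a standard localizing subcategory argument," so your proof matches the paper's approach (modulo the usual cosmetic point that for the five-lemma step one intersects over all shifts of $x$). The caveat you flag is genuine: closure of $\cc_x$ under sums and closure of the essential image under sums both need $F$ to preserve arbitrary coproducts, which is not among the printed hypotheses; however, your fallback justification that this is "implicit in the compactly generated setting" should not be trusted, since exactness only controls finite colimits and an exact functor is not pinned down by its restriction to compact objects. The right fix is the first option you name — add sum-preservation to the hypotheses — and in the paper's application this costs nothing, because the comparison functor $\Phi$ is established to be exact and sum-preserving before the proposition is invoked.
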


\subsubsection{Localization}

Let $\mathcal C$ be a quasicategory and $W$ a collection of morphisms in $\mathcal C$.
The universal example of a functor $\cc\rightarrow \cd$ inverting $W$ will be denoted $\cc \rightarrow \cc[W^{-1}]$.
It is uniquely determined up to equivalence by the condition that for every $\infty$-category $\cd$ the precomposition
\[
	\Fun(\cc[W^{-1}],\cd)\rightarrow \Fun(\cc,\cd)
\]
is fully faithful with essential image consisting of those functors that send the maps in $W$ to equivalences \cite[1.3.4.1]{ha}.
It follows from the universal property that, given the top arrow $F$ in the diagram
\begin{center}
	\begin{tikzcd}
	\cc	\arrow[r, "F"]\arrow[d]				&	\cd	\arrow[d]	\\
	\cc[W^{-1}]	\arrow[r, dashed, "\tilde F"]	&	\cd[W'^{-1}]
	\end{tikzcd}
\end{center}
such that $F$ sends $W$ to a distinguished class of morphisms $W'$ in $\cd$,
there exists a unique functor on localizations together with a natural identification making the square commute.
\begin{Lemma}\label{Lemma:AdjunctionLocalization}
Let $F:\cc\rightleftarrows \cd:G$ be an adjunction such that $F$ and $G$ preserve the classes of morphisms $W$ and $W'$.
Then the induced functors on localizations also form an adjunction.
\end{Lemma}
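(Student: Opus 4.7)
The plan is to build the candidate unit and counit for $\tilde F\dashv\tilde G$ by transporting $\eta$ and $\epsilon$ through the universal property of localization, and then to deduce the triangle identities by the same formal mechanism.

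First, I would invoke the universal property to obtain $\tilde F\colon\cc[W^{-1}]\to\cd[W'^{-1}]$ and $\tilde G\colon\cd[W'^{-1}]\to\cc[W^{-1}]$ together with natural equivalences $\tilde F\circ L_\cc\simeq L_\cd\circ F$ and $\tilde G\circ L_\cd\simeq L_\cc\circ G$, where $L_\cc$ and $L_\cd$ denote the localization functors. For the unit, consider the composite natural transformation
\[
L_\cc\xrightarrow{L_\cc\,\eta} L_\cc\circ G\circ F\simeq \tilde G\circ L_\cd\circ F\simeq \tilde G\circ\tilde F\circ L_\cc
\]
in the functor category $\Fun(\cc,\cc[W^{-1}])$. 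Since $F$ sends $W$ into $W'$ and $G$ sends $W'$ into $W$, both source and target invert $W$, so both functors lie in the essential image of the fully faithful precomposition $L_\cc^\ast\colon\Fun(\cc[W^{-1}],\cc[W^{-1}])\hookrightarrow\Fun(\cc,\cc[W^{-1}])$. Consequently the displayed transformation descends to a unique $\tilde\eta\colon\Id\to\tilde G\circ\tilde F$. Dually one obtains $\tilde\epsilon\colon\tilde F\circ\tilde G\to\Id$.

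For the triangle identities, $(\tilde\epsilon\,\tilde F)\circ(\tilde F\,\tilde\eta)$ is a morphism in $\Fun(\cc[W^{-1}],\cd[W'^{-1}])$ between two functors both of which invert $W$ after precomposition with $L_\cc$. Precomposing this composite with $L_\cc$ and tracing through the natural identifications above, it corresponds to $L_\cd\bigl((\epsilon F)\circ(F\eta)\bigr)\simeq L_\cd\,\Id_F$ in $\Fun(\cc,\cd[W'^{-1}])$. Since $L_\cc^\ast$ is fully faithful on the subcategory of functors inverting $W$, it reflects this identification, giving $(\tilde\epsilon\,\tilde F)\circ(\tilde F\,\tilde\eta)\simeq\Id_{\tilde F}$; the other triangle identity is symmetric.

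The main obstacle is purely coherent bookkeeping: assembling the various equivalences $\tilde F L_\cc\simeq L_\cd F$ and $\tilde G L_\cd\simeq L_\cc G$ to descend whiskered composites and the equalities between them through the universal property. No further homotopical content is needed beyond the fully faithful universal property of localization recalled in the paragraph preceding the lemma.
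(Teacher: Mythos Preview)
Your proposal is correct and follows essentially the same route as the paper: descend the unit and counit through the fully faithful precomposition with the localization functor, then verify the triangle identities by the same mechanism. The paper compresses the argument by writing $\Nat(\Id,\tilde G\tilde F)\simeq\Nat(\gamma,\tilde G\tilde F\gamma)\simeq\Nat(\gamma,\gamma GF)$ and only sketches the triangle identity check, but the content is identical to what you wrote.
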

\begin{proof}
Precomposition with the localization functor $\gamma_\cc$ induces and equivalence
\[
	\Nat(\Id, \tilde G\tilde F)\overset{\simeq}{\lra}\Nat(\gamma, \tilde G\tilde F \gamma)\simeq \Nat(\gamma, \gamma G F)
\]
between the spaces of natural transformations and so there is a unique arrow $\tilde\eta:\Id\ra\tilde G\tilde F$ corresponding to the composition $\gamma\eta$ with the unit of the adjunction on the right hand side. Analogously one defines a natural transformation $\tilde\xi:\tilde F\tilde G\ra \Id$ and checks that the triangle equalities are satisfied up to equivalence.
\end{proof}

An important special case is given by \emph{Bousfield localizations}, i.e.\ left adjoints $L$ to the inclusions of full subcategories $\cc^0\subseteq\cc$.
Equivalently, an endofunctor $L:\cc\rightarrow \cc$ together with a natural transformation $\eta:\Id\rightarrow L$ yields a Bousfield localization $\cc\rightarrow L\cc$
if and only if the two maps $L\eta, \eta L:L\rightarrow LL$ are equivalences \cite[5.2.7.4]{htt}.
We record some standard facts:
\begin{Proposition}\label{prop:BousfieldLocalization}
Let $L:\cc\ra\cc^0\subseteq \cc$ be a Bousfield localization.
    \begin{itemize}
    	\item[(a)]
	The localization functor $L:\cc\rightarrow \cc^0$ exhibits $\cc^0$ as the $\infty$-category obtained from $\cc$ by inverting the $L$-equivalences.
	\item[(b)]
	The $\infty$-category $\cc^0$ admits all colimits that exist in $\cc$ and they are computed by applying $L$ to the colimit formed in $\cc$.
	It also admits all limits that exist in $\cc$ and they are created in $\cc$.
	\item[(c)]
	Let $\cc$ be stable and $L:\cc\rightarrow\cc$ an exact endofunctor. Then $\cc^0$ is a stable subcategory of $\cc$.
    \end{itemize}
\end{Proposition}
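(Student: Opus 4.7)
The plan is to treat the three items in order, exploiting throughout the adjunction $L\dashv \iota$ between the localization functor and the inclusion $\iota:\cc^0\hookrightarrow \cc$, together with the characterizing property that $L\eta$ and $\eta L$ are equivalences.

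For (a), the goal is to verify the universal property of $\cc\to\cc^0$ as the localization at the class $W_L$ of morphisms $f$ such that $Lf$ is an equivalence. First I would observe that every component $\eta_X:X\to LX$ lies in $W_L$ (since $L\eta$ is an equivalence by hypothesis), so any functor $F:\cc\to\cd$ that inverts $W_L$ sends every $\eta_X$ to an equivalence. Given such an $F$, I would define $\tilde F:\cc^0\to\cd$ as the restriction of $F$ along $\iota$ and build a natural equivalence $F\simeq \tilde F\circ L$ using $\eta$. The uniqueness statement (fully faithfulness of precomposition with $L$ into the appropriate subcategory of $\Fun(\cc,\cd)$) follows from the formula $G\simeq G\circ L\circ \iota$ for any $G:\cc^0\to\cd$, which in turn comes from $\eta\iota$ being an equivalence. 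This is exactly the argument of \cite[5.2.7.12]{htt}, and the only subtle point is to keep track of the fact that adjoint functors are only defined up to equivalence; this is handled formally as in Lemma \ref{Lemma:AdjunctionLocalization}.

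For (b), the claim about limits is essentially formal: $\iota$ is a right adjoint, hence preserves and jointly reflects limits, so if $D:I\to\cc^0$ has a limit in $\cc$ then that limit automatically lies in $\cc^0$ (up to equivalence) and is a limit in $\cc^0$. For colimits, I would argue by the Yoneda lemma: for any $Y\in\cc^0$ and any diagram $D:I\to\cc^0$ whose colimit exists in $\cc$, the chain
\[
\Map_{\cc^0}(L\colim_\cc D, Y)\simeq \Map_\cc(\colim_\cc D, Y)\simeq \lim_{I^{\op}}\Map_\cc(D(-),Y)\simeq \lim_{I^{\op}}\Map_{\cc^0}(D(-),Y)
\]
exhibits $L(\colim_\cc D)$ as a colimit in $\cc^0$, where the first equivalence uses the $L\dashv\iota$ adjunction and the last uses fully faithfulness of $\iota$.

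For (c), I would verify the three defining properties of a stable $\infty$-category for $\cc^0$. A zero object exists because $L$ is exact, hence preserves zero objects, so $L(0)\simeq 0$ lies in $\cc^0$; it is terminal there since limits are created in $\cc$ by (b), and initial by the colimit description. Finite colimits in $\cc^0$ exist by (b). It remains to show the suspension is an equivalence on $\cc^0$; here I would combine (b) with exactness of $L$. The suspension in $\cc^0$ of $X\in\cc^0$ is $\Sigma_{\cc^0}X\simeq L(\Sigma_\cc X)$ by (b), and exactness of $L$ gives $L\Sigma_\cc\simeq \Sigma_\cc L$, so for $X\in\cc^0$ we have $\Sigma_{\cc^0}X\simeq \Sigma_\cc LX\simeq \Sigma_\cc X$. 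Dually, $\Omega_{\cc^0}X\simeq \Omega_\cc X$ (since loops are limits and are created in $\cc$). Since $\Sigma_\cc$ and $\Omega_\cc$ are mutually inverse equivalences on $\cc$ and both restrict to $\cc^0$, the restriction $\Sigma_{\cc^0}$ is an equivalence with inverse $\Omega_{\cc^0}$, completing the verification that $\cc^0$ is stable.

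I expect the main obstacle to be the bookkeeping in (a): carefully constructing $\tilde F$ from $F$ and producing the required natural equivalence $F\simeq \tilde F L$ at the quasicategorical level requires some care, as opposed to the strict 1-categorical reflection argument. The other parts are direct consequences of $L$ being a left adjoint and (for (c)) of exactness being compatible with the reflection.
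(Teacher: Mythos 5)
Your proposal is correct and follows essentially the same route as the paper: part (a) is the argument of \cite[5.2.7.12]{htt} (which the paper simply cites), part (b) is the formal adjointness/fully-faithfulness argument, and part (c) is the observation that exactness of $L$ makes the suspension of $\cc$ restrict to $\cc^0$, with loops restricting since limits are created in $\cc$. Your write-up just supplies more detail than the paper's brief proof.
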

\begin{proof}
Part (a) is \cite[5.2.7.12]{htt} and (b) follows formally as in the 1-categorical setting from adjointness and the fact that the restriction of $L$ to $\cc^0$
is naturally equivalent to the identity via the unit $\eta:\Id\rightarrow L$.

If $L$ is exact, then it follows that the suspension functor of $\cc$ restricts to that of $\cc^0$,
which is thus also an equivalence. This shows (c).
\end{proof}
\begin{Remark}
There is also the dual notion of Bousfield colocalization where the inclusion admits a right adjoint and the corresponding statements hold.
\end{Remark}

\subsubsection{Underlying $\infty$-categories of model categories}

We will mostly work with $\infty$-categories that are presented by model categories.
\begin{Definition}
Let $\mathcal M$ be a model category with weak equivalences $W$.
The \emph{underylying $\infty$-category} of $\mathcal M$ is the $\infty$-categorical localization
\(
	(N\mathcal M)[W^{-1}]
\)
of $\mathcal M$ at the weak equivalences.
\end{Definition}
In order to gain computational control over this,
we assume that $\cm$ is a topological model category (e.g.\ orthogonal spectra) and that $M$ admits functorial factorizations.
In that case the underlying $\infty$-category of $\cm$ can be identified with the homotopy coherent nerve $N_{\Delta}(\cm^\circ)$
of the full topological subcategory $\cm^\circ$ of bifibrant (i.e.\ fibrant-cofibrant) objects.
More precisely, the composition

\begin{equation}\label{eqn:TopologicalModelCatLocalization}
	N(\cm)\rightarrow N(\cm^\circ)\rightarrow N_{\Delta}(\cm^\circ)
\end{equation}
of a bifibrant replacement functor with the comparison map to the coherent nerve
exhibits the $N_{\Delta}(\cm^\circ)$ as the $\infty$-category obtained from $N\cm$ by inverting $W$ \cite[1.3.4.20, 1.3.4.16]{ha}.

Model categories represent nicely behaved $\infty$-categories:

\begin{Proposition}[{\cite[1.3.4.22, 1.3.4.24, 1.3.4.23]{ha}}]\label{Prop:ModelCategoryBicomplete}
Let $\mathcal M$ be a combinatorial model category.
\begin{itemize}
	\item[(i)]
	The underlying $\infty$-category of $\cm$ admits all (small) limits and colimits.
	\item[(ii)]
	Let $X:I\rightarrow M$ be a diagram in $\cm$ indexed by a small category $I$
	and $\alpha:\colim_{i\in I}X_i\ra m$ a map in $\cm$. Then $\alpha$ exhibits $m$ as a homotopy colimit of $X$
	if and only if
	\[
		N(I)^{\triangleright}\rightarrow N(\cm)\rightarrow N(\cm)[W^{-1}]
	\]
	is a colimit diagram in the underlying $\infty$-category of $\cm$.
	The analogous statement for limits also holds.
	\item[(iii)]
	The canonical functor
	\[
		N(\Fun(I,\cm))[W^{-1}]\overset{\simeq}{\longrightarrow}\Fun(N(I),N(\cm)[W^{-1}])
	\]
	is an equivalence.
\end{itemize}
\end{Proposition}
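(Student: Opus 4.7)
The plan is to handle the three parts by first establishing that the underlying $\infty$-category of a combinatorial model category is presentable, and then reducing (ii) and (iii) to computations with derived mapping spaces.

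For part (i), my approach is to show that $N(\cm)[W^{-1}]$ is a presentable $\infty$-category, from which bicompleteness follows by a general fact about presentable $\infty$-categories. The standard route (Dugger) is to first replace $\cm$ by a Quillen-equivalent simplicial combinatorial model category $\cm'$; since Quillen equivalences induce equivalences of underlying $\infty$-categories, we may assume $\cm$ is already simplicial. Then the identification (\ref{eqn:TopologicalModelCatLocalization}) displays $N(\cm)[W^{-1}]$ as the coherent nerve $\Ncoh(\cm^\circ)$, which for a simplicial combinatorial model category is known to be a presentable $\infty$-category (it is an accessible localization of a presheaf $\infty$-category, cut out by the generating trivial cofibrations).

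For part (ii), the key input is that the mapping spaces in $N(\cm)[W^{-1}]$ are computed by the classical derived mapping spaces in $\cm$ (via framings/simplicial resolutions). Granting this, I would verify the universal property of colimits in $N(\cm)[W^{-1}]$ directly: an extended diagram $N(I)^{\triangleright}\to N(\cm)[W^{-1}]$ is a colimit diagram if and only if for every test object $y$ the map
\[
	\Map(m, y) \longrightarrow \lim_{i \in N(I)^{\op}} \Map(X_i, y)
\]
is an equivalence of spaces. But the classical theory of homotopy colimits (via projective cofibrant replacement of $X:I\to\cm$) produces exactly an $m$ with the property that the derived mapping space $\operatorname{RMap}_\cm(m,y)$ is equivalent to the homotopy limit of the derived mapping spaces $\operatorname{RMap}_\cm(X_i,y)$. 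Matching the two descriptions of mapping spaces yields the claim. The argument for limits is dual.

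For part (iii), I would equip $\Fun(I,\cm)$ with the projective model structure (whose weak equivalences are pointwise), which exists because $\cm$ is combinatorial. The comparison functor
\[
	N(\Fun(I,\cm))[W^{-1}] \longrightarrow \Fun(N(I), N(\cm)[W^{-1}])
\]
is exhibited as follows: a pointwise weak equivalence is sent to a pointwise equivalence of $\infty$-categorical functors and hence to an equivalence, so the functor factors through the localization. To see it is an equivalence, I would check essential surjectivity and full faithfulness separately. Essential surjectivity amounts to rigidifying a homotopy coherent diagram indexed by $N(I)$ into a strict $I$-diagram after replacing levelwise — projectively cofibrant diagrams provide the required models. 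Fully faithfulness reduces, by the formula (\ref{eqn:PointwiseFormula}) for mapping spaces in functor categories and the identification of derived hom spaces, to the part (ii) computation applied level by level.

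The main obstacle is the rigidification step in (iii): showing that every $\infty$-functor $N(I)\to N(\cm)[W^{-1}]$ is equivalent to one coming from a strict $I$-diagram is the non-formal part, and it is the content of the cited result of Lurie. Parts (i) and (ii) are comparatively routine once the presentability and derived mapping space calculations are in place.
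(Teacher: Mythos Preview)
The paper does not supply its own proof of this proposition: it is stated with a direct citation to \cite[1.3.4.22, 1.3.4.24, 1.3.4.23]{ha} in the header, and the only additional content is the remark immediately following it, which explains why Lurie's formulation (in terms of $N(\cm^c)[W^{-1}]$) agrees with the one used here. So there is nothing to compare against beyond Lurie's arguments themselves.

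Your sketch is a reasonable outline of those arguments and is broadly correct. A couple of comments. For (i), invoking Dugger to pass to a simplicial combinatorial model category and then identifying $\Ncoh(\cm^\circ)$ as presentable is exactly the standard route; this is fine. For (ii), your reduction to derived mapping spaces is correct, but note a mild circularity: to match $\lim_{N(I)^{\op}}\Map(X_i,y)$ with the classical homotopy limit of derived mapping spaces you are implicitly using (ii) for the $\infty$-category of spaces, so in practice one bootstraps from the simplicial case where this is checked by hand. For (iii), you have correctly isolated rigidification of homotopy coherent diagrams as the non-formal step; this is precisely what \cite[1.3.4.25]{ha} (and the surrounding material) establishes, and your reduction of full faithfulness to a mapping-space computation is the right shape. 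None of this deviates from Lurie's treatment, which is what the paper is citing.
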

\begin{Remark}
In \cite{ha} Lurie works with the localization $N(\cm^c)[W^{-1}]$ of the category $M^c$ of cofibrant objects.
But this is equivalent to the localization of the full category $\cm$ (cf.\ \cite[1.3.4.16]{ha}).
An inverse to the inclusion is induced by a fibrant replacement functor.
\end{Remark}
\begin{Remark}
Let $\cm$ be a topological model category as above (not necessarily combinatorial).
Then it is still true that the underlying $\infty$-category admits colimits.
By \cite[4.4.2.6]{htt}, it is sufficient to show that coproducts and pushouts exist.
These are modeled by coproducts and homotopy pushouts on the pointset level,
which can be directly verified by inspection of mapping spaces.
Part (ii) also remains true for all other diagram shapes.
This follows from \cite[4.2.4.1]{htt} and using functorial factorizations.
\end{Remark}
We also review how Quillen adjunctions induce a derived adjunction on underlying $\infty$-categories,
which is slightly more complicated then the situation in Lemma \ref{Lemma:AdjunctionLocalization}.
Let $F:\cm\rightarrow \cn$ be a left Quillen functor. Then $F$ is \emph{left derivable} in the sense that
the dashed arrow in the square
\begin{center}
	\begin{tikzcd}
		N\cm\arrow[r, "F"]\arrow[d, "\gamma_{\cm}"]	&	N\cn\arrow[d, "\gamma_{\cn}"]		\\
		(N\cm)[W^{-1}]\arrow[r,dashed, "\mathbb LF"]\arrow[ru, Rightarrow, shorten <= 1em, shorten >=1em]			&	(N\cn)[W^{-1}]
	\end{tikzcd}
\end{center}
exists together with a natural transformation exhibiting it as an \emph{absolute right Kan extension} of $\gamma_{\cn} \circ F$ along $\gamma_{\cm}$,
i.e.\ for every functor $p:(N\cn)[W^{-1}]\rightarrow \cc$ the composite $p\circ\mathbb LF$ is a right Kan extension of $p\circ\gamma_{\cn} \circ F$.
Explicitely, $\mathbb LF$ is induced by the composition $F\circ (-)_c$ of $F$ with a cofibrant replacement functor,
cf.\ \cite[A.10]{nikolausscholze}.
Dually, a right Quillen functor $G:\cn\rightarrow \cm$ has a right derived functor $\mathbb RG$, which is an absolute left Kan extension.
\begin{Remark}\label{Rmk:QuillenBifunctor}
Quillen bifunctors $\cm\times\cm'\rightarrow \cm''$ can also be left derived by cofibrantly replacing in both factors.
In particular, for a monoidal model category $\cm$ there is a derived product functor 
\[
	\otimes:(N\cm)[W^{-1}]\times(N\cm)[W^{-1}]\rightarrow (N\cm)[W^{-1}].
\]
This can even be extended to a full symmetric monoidal structure on the underlying $\infty$-category (\cite[4.1.7.6]{ha}, \cite[Thm. A.7]{nikolausscholze}).
\end{Remark}
Now let $F$ and $G$ be adjoint to each other with unit $\eta$ and counit $\xi$. The derived unit transformation
\(
	\Id\rightarrow \mathbb RG\circ \mathbb LF
\)
corresponds under the universal property of $\mathbb LF$ to
\[
	\gamma_\cm\xrightarrow{\gamma_\cm\circ \eta}\gamma_\cm\circ G\circ F\rightarrow \mathbb RG \circ\gamma_\cn\circ F,
\]
where the last arrow is the natural transformation coming with $\mathbb RG$. Similarly, the derived counit
\(
	\mathbb LF\circ \mathbb RG\rightarrow \Id
\)
corresponds under the universal property of $\mathbb RG$ to
\[
	\mathbb LF\circ\gamma_\cm\circ G\rightarrow \gamma_\cn\circ F\circ G\xrightarrow{\gamma_\cn \circ \xi} \gamma_\cn.
\]
It follows by inspection that the triangle identities are satisfied up to equivalence and thus:

\begin{Proposition}\label{Prop:DerivedAdjunction}
Let $(F,G)$ be a Quillen pair. 
The above natural transformations yield a derived adjunction
\[
	\mathbb LF:(N\cm)[W^{-1}]\rightleftarrows(N\cn)[W^{-1}]:\mathbb RG.
\]
\end{Proposition}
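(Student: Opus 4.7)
The plan is to verify the two triangle identities for the derived unit $\tilde\eta$ and derived counit $\tilde\xi$ constructed in the preceding paragraphs, which by \cite[5.2.2.8]{htt} suffices to establish an adjunction $\mathbb LF \dashv \mathbb RG$. The critical input is that $\mathbb LF$ and $\mathbb RG$ are \emph{absolute} Kan extensions: absoluteness of $\mathbb LF$ as the right Kan extension of $\gamma_{\cn}\circ F$ along $\gamma_{\cm}$ means that for any functor $p$, the composite $p\circ \mathbb LF$ is again a right Kan extension (of $p\circ \gamma_{\cn}\circ F$), and dually for $\mathbb RG$. In particular, $\mathbb LF\mathbb RG\mathbb LF$ and $\mathbb RG\mathbb LF\mathbb RG$ inherit universal properties from those of $\mathbb LF$ and $\mathbb RG$.

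For the first triangle identity I would show that the composite
\[
\mathbb LF \xrightarrow{\mathbb LF \tilde\eta} \mathbb LF\mathbb RG\mathbb LF \xrightarrow{\tilde\xi \mathbb LF} \mathbb LF
\]
is equivalent to the identity endomorphism of $\mathbb LF$. By the universal property this endomorphism is determined by its image in $\Nat(\mathbb LF\gamma_{\cm},\gamma_{\cn}F)$, obtained by whiskering with $\gamma_{\cm}$ and postcomposing with the universal structure map of $\mathbb LF$. Unwinding the definitions of $\tilde\eta$ and $\tilde\xi$ as built from $\gamma_{\cm}\circ \eta$, $\gamma_{\cn}\circ \xi$, and the structure maps of the two Kan extensions, the transported composite collapses via naturality to the image under $\gamma_{\cn}$ of the ordinary triangle identity $F\xrightarrow{F\eta}FGF\xrightarrow{\xi F}F$, which holds strictly in $\cn$. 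Matching this against the universal structure map then forces the original composite to be equivalent to $\Id_{\mathbb LF}$. The second triangle identity is entirely dual, using the absolute left Kan extension property of $\mathbb RG$ to transport the verification to $\Nat(\gamma_{\cm}G,\mathbb RG\gamma_{\cn})$ and collapsing to $\gamma_{\cm}$ applied to the other triangle identity of $(F,G)$.

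The main obstacle is purely diagrammatic bookkeeping: the transported composites involve multiple whiskerings of the structure maps of $\mathbb LF$, $\mathbb RG$, $\eta$, and $\xi$, and one must verify carefully that the 2-cells compose and cancel as expected. There is no further homotopical content beyond the defining properties of absolute Kan extensions and the triangle identities for the strict adjunction $(F,G)$; the proposition expresses precisely that these identities descend through the two universal properties, so the verification proceeds --- as the paper puts it --- \emph{by inspection}.
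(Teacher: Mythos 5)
Your proposal matches the paper's argument: the paper constructs the derived unit and counit exactly as you describe, via the universal properties of the absolute right/left Kan extensions $\mathbb LF$ and $\mathbb RG$, and then asserts that the triangle identities hold up to equivalence ``by inspection,'' which is precisely the transport-to-the-strict-triangle-identities computation you spell out. So this is correct and essentially the same proof, with your write-up merely making the paper's inspection step explicit.
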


We conclude by recording the expected preservation of stability under passage to underlying $\infty$-categories.
\begin{Proposition}\label{Prop:ModelCatStable}
Let $\cm$ be a stable model category. Then the underlying $\infty$-category of $\cm$ is also stable.
\end{Proposition}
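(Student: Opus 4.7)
The plan is to verify the three defining conditions of stability for $\cc \coloneqq N(\cm)[W^{-1}]$ recalled in the excerpt: that $\cc$ is pointed, admits finite colimits, and has an invertible suspension functor.

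First, I would note that any zero object $0 \in \cm$ remains a zero object in $\cc$. Indeed, being initial (resp.\ terminal) in $\cm$ implies that $0$ is a homotopy initial (resp.\ terminal) object, and Proposition \ref{Prop:ModelCategoryBicomplete}(ii) (together with the subsequent remark that covers the non-combinatorial topological case) then guarantees that $0$ represents an initial and terminal object in the underlying $\infty$-category. For the same reason $\cc$ admits finite colimits, since finite coproducts and pushouts are modeled by their pointset counterparts after cofibrant replacement.

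The main step, and the one requiring actual input beyond what has been recorded above, is to show that the $\infty$-categorical suspension functor $\Sigma_\cc$ is an equivalence. By definition, $\cm$ being a stable model category means that the Quillen adjunction
\[
\Sigma : \cm \rightleftarrows \cm : \Omega
\]
is a Quillen equivalence. By Proposition \ref{Prop:DerivedAdjunction}, this descends to an adjunction $\mathbb L\Sigma \dashv \mathbb R\Omega$ on $\cc$ whose unit and counit are equivalences, i.e.\ to an adjoint equivalence of $\cc$ with itself. It remains to identify $\mathbb L\Sigma$ with the intrinsic suspension $\Sigma_\cc$.

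For this identification, recall that $\Sigma_\cc X$ is characterized as the pushout of the cospan $0 \leftarrow X \rightarrow 0$ in $\cc$. On the pointset level, after replacing $X$ by a cofibrant model, the suspension $\Sigma X$ can be computed as a strict pushout $0 \sqcup_X 0$ along the cofibration $X \hookrightarrow 0$ followed by a cofibrant replacement of one of the legs; this is a homotopy pushout in $\cm$. Applying Proposition \ref{Prop:ModelCategoryBicomplete}(ii), such homotopy pushouts are sent by $\gamma_\cm : N(\cm) \to \cc$ to pushouts in $\cc$, so $\mathbb L\Sigma(X) \simeq \Sigma_\cc(X)$ naturally in $X$. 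Combining this with the previous step concludes that $\Sigma_\cc$ is an equivalence.

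The only real obstacle is the last identification $\mathbb L \Sigma \simeq \Sigma_\cc$; the rest follows formally from facts already established in the preliminaries. This identification is a standard compatibility between model-categorical and $\infty$-categorical (co)limits, and in the combinatorial case it is immediate from Proposition \ref{Prop:ModelCategoryBicomplete}(ii), while in the general topological case one appeals to the extension of that statement recorded in the remark following the proposition.
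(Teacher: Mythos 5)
Your proof is correct and follows essentially the same route as the paper: pointedness and finite colimits are inherited from the general statements about underlying $\infty$-categories, and invertibility of the suspension comes from the Quillen equivalence $S^1\smashp(-)\dashv\Omega$ descending to an equivalence of the underlying $\infty$-category, with the derived suspension identified with the intrinsic $\Sigma_\cc$ via the compatibility of homotopy pushouts with $\infty$-categorical pushouts (the paper phrases this as ``the suspension functor is modelled by the strict one''). Only a phrasing slip: $X\to 0$ is not itself a cofibration; one factors it through the cone $X\hookrightarrow CX\overset{\simeq}{\to}0$ and takes the strict pushout $CX\sqcup_X 0$, which is what your ``replacement of one of the legs'' is meant to achieve.
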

\begin{proof}
We sketch the argument for a based topological model category such as orthogonal spectra.
Since $\cm$ is pointed so is the underyling $\infty$-category, which admits all colimits, in particular finite ones.
The suspension-loops adjunction
\[
	S^1\smashp-:\cm\rightleftarrows \cm:\Omega(-)
\]
is a Quillen equivalence and induces an equivalence on the underlying $\infty$-category,
where the suspension functor is modelled by the strict one.
\end{proof}

\section{The additive model for genuine $G$-spectra}

Let $G$ be a compact Lie group, $\mathcal F$ a family of closed (later always finite) subgroups of $G$, and $R\subseteq \mathbb Q$ a subring.
A morphism $f:X\rightarrow Y$ of orthogonal $G$-spectra is an \emph{$R$-local $\cf$-equivalence} if it induces an isomorphism
$(\pi_\ast^HX)\tensor R\rightarrow(\pi_\ast^HY)\tensor R$ for all $H\in \cf$ and we denote the collection of these by $W_{\cf, R}$.

\begin{Definition}
The $\infty$-category $\gensp$ of \emph{genuine $G$-spectra} is the underlying $\infty$-category
of orthogonal $G$-spectra, i.e.\ the localization at the $\underline\pi_\ast$-isomorphisms.
We denote by
\[
	\gensp_{R,\cf}=\N(G\hyph\orthspec)[(W_{\cf,R})^{-1}]\simeq (\gensp[G])[(W_{\cf,R})^{-1}]
\]
the $\infty$-category obtained by further inverting the $R$-local $\cf$-equivalences.
\end{Definition}

\begin{Remark}
This should not to be confused with the $\infty$-category $\Sp^{BG}$ of \emph{naive $G$-spectra},
which is the $\infty$-category of $G$-objects in spectra. It is also modelled by orthogonal $G$-spectra,
but with respect to the underlying weak equivalences, i.e.\  those maps which are $\pi_\ast$-isomorphisms of underlying non-equivariant spectra.
\end{Remark}
Let $\otimes$ denoted the derived smash product on genuine $G$-spectra,
which is objectwise given by $X\otimes Y\simeq X_c\smashp Y_c$ (cf.\ Remark \ref{Rmk:QuillenBifunctor}).
A map $f$ is contained in $W_{\cf, R}$ if and only if
\[
	E\cf_+\otimes X\otimes \bs_R \rightarrow E\cf_+\otimes Y\otimes \bs_R
\]
is a $\underline{\pi}_\ast$-isomorphism, where $E\cf$ is a universal space for the family $\cf$
and $\bs_R$ a Moore spectrum for the ring $R$ coming with a map
$\bs\rightarrow \bs_R$ that induces an isomorphism
$(\underline \pi_\ast \bs)\otimes R\cong \underline \pi_\ast \bs_R$.
\begin{Remark}
This is a Moore spectrum in the sense that it is the image of an ordinary Moore spectrum under the left adjoint
$\Sp\rightarrow \gensp$ to the forgetful functor. It can be constructed as a standard sequential colimit
\[
	\bs_R\simeq \colim (\bs\overset{x_1}{\lra} \bs \overset{x_2}{\lra}\bs \overset{x_3}{\lra}\cdots),
\]
where the sequence of $x_i\in \mathbb N$ contains every prime that is invertible in $R$ infinitely often.
\end{Remark}

We note that $\gensp_{R,\cf}$ is a smashing Bousfield (co-)localization of $\gensp$.
An orthogonal $G$-spectrum $X$ is called an \emph{$\cf$-spectrum} if the map $E\cf_+\otimes X\ra X$ is an equivalence.
It is \emph{$R$-local} if $X\rightarrow X\otimes \bs_R$ is an equivalence, that is $\underline{\pi}_\ast X\cong (\underline{\pi}_\ast X)\tensor R$.
The functors $E\cf_+\otimes - $ and $-\otimes \bs_R$, which commute with each other, are right- respectively left adjoints to the inclusions of the full subcategories of $\cf$-spectra or $R$-local spectra. 

\begin{Proposition}
The following holds for the $\infty$-category $\gensp_{\cf, R}$\emph{:}
\begin{itemize}
	\item[(i)]
	The composition $E\cf_+\otimes -\otimes \bs_R$ of Bousfield \emph(co-\emph)localization functors identifies
	$\gensp_{R,\cf}$ with the full subcategory of $\gensp$ spanned by the $R$-local $\cf$-spectra. In particular,
	there is an identification
	\[
		\Map_{(\gensp)_{\cf,R}}(X,Y)\simeq \Map_{\gensp}(E\cf_+\otimes X, Y\otimes \bs_R)
	\]
	of mapping spaces.
	\item[(ii)]
	It is stable and admits all \emph(small\emph) limits and colimits.	
	\item[(iii)]
	The $R$-local homotopy groups 
	\[
		[\suspsp_+(G/H), X]_{\gensp_{\cf,R}}\cong(\pi^H_0X)\tensor R
	\]
	are corepresented in the homotopy category by the suspension spectra of transitive $G$-sets. In particular,
	$\{\suspsp_+(G/H)\}_{H\in \cf}$ forms a set of compact generators. 
\end{itemize}
\end{Proposition}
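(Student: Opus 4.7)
The plan is to build part (i) as a two-stage Bousfield localization, and then derive parts (ii) and (iii) essentially formally.

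For (i), I would observe that the endofunctor $(-) \otimes \bs_R$ of $\gensp$, equipped with its unit $X \to X \otimes \bs_R$, is a Bousfield localization onto the full subcategory of $R$-local spectra: idempotency follows from $\bs_R \otimes \bs_R \simeq \bs_R$, which can be checked on homotopy groups. Similarly, $E\cf_+ \otimes (-)$ with its counit $E\cf_+ \otimes X \to X$ is a Bousfield colocalization onto the full subcategory of $\cf$-spectra, with idempotency from the $G$-equivalence $E\cf \times E\cf \simeq E\cf$. I would then localize in two stages — first at the $R$-equivalences, then at the $\cf$-equivalences — and apply Proposition \ref{prop:BousfieldLocalization}(a) together with its colocalization dual at each stage. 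This identifies $\gensp_{\cf, R}$ with the full subcategory of $R$-local $\cf$-spectra. The mapping space formula then comes from the composite adjunction: for $Y$ an $R$-local $\cf$-spectrum, $\Map_{\gensp_{\cf, R}}(X, Y) \simeq \Map_{\gensp}(E\cf_+ \otimes X, Y)$, and since $Y \simeq Y \otimes \bs_R$, one may equivalently write the target as $Y \otimes \bs_R$.

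For (ii), $\gensp$ is stable by Proposition \ref{Prop:ModelCatStable} and bicomplete by Proposition \ref{Prop:ModelCategoryBicomplete}, applied to the combinatorial stable model category of orthogonal $G$-spectra. Since both factors of the composite (co)localization functor $L = E\cf_+ \otimes (-) \otimes \bs_R$ are given by smashing with a fixed object, $L$ is exact, and Proposition \ref{prop:BousfieldLocalization}(b), (c) transfers bicompleteness and stability to $\gensp_{\cf, R}$.

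For (iii), for $H \in \cf$ the suspension spectrum $\suspsp_+(G/H)$ is already an $\cf$-spectrum (the map $E\cf_+ \otimes \suspsp_+(G/H) \to \suspsp_+(G/H)$ is an $\underline\pi_\ast$-isomorphism, since on $H'$-fixed points for $H' \in \cf$ it is tested by the contractibility of $E\cf^{H'}$). Thus, using the identification from (i) and the classical representability of equivariant homotopy groups by orbit suspension spectra, for $X$ an $R$-local $\cf$-spectrum one has
\[
    [\suspsp_+(G/H), X]_{\gensp_{\cf,R}} \cong [\suspsp_+(G/H), X]_{\gensp} \cong \pi_0^H X \cong (\pi_0^H X) \otimes R,
\]
the last isomorphism by $R$-locality. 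Compactness: coproducts in $\gensp_{\cf,R}$ are computed by applying $L$ to coproducts in $\gensp$, and both factors of $L$ preserve coproducts; compactness of $\suspsp_+(G/H)$ in $\gensp$ (where $\pi_0^H$ commutes with sums) then gives compactness in $\gensp_{\cf,R}$. Generation: if all suspension shifts of the above groups vanish for every $H \in \cf$, then $\pi_k^H X \otimes R = 0$ for all $H \in \cf$ and $k \in \bz$, so $X$ lies in $W_{\cf, R}$ relative to $0$, hence is zero in $\gensp_{\cf, R}$.

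The main technical delicacy sits in part (i): verifying that the two-step Bousfield scheme genuinely produces $\gensp[W_{\cf,R}^{-1}]$, i.e.\ that a map is inverted in the iterated localization iff it is an $R$-local $\cf$-equivalence. This reduces to the commutativity of $E\cf_+ \otimes (-)$ and $(-) \otimes \bs_R$, together with the fact that an $R$-local $\cf$-spectrum is preserved by both.
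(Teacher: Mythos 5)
Your argument is correct and follows essentially the same route as the paper: part (i) via the smashing Bousfield (co-)localizations $E\cf_+\otimes(-)$ and $(-)\otimes\bs_R$ together with Proposition \ref{prop:BousfieldLocalization} (and its colocalization dual), part (ii) from exactness of these smashing functors, and part (iii) by combining the resulting mapping-space formula with the classical corepresentability of $\pi_0^H$ and sum-preservation, exactly as in the paper's proof. The only point to tighten is your verification that $\suspsp_+(G/H)$ is an $\cf$-spectrum: the map $E\cf_+\otimes\suspsp_+(G/H)\to\suspsp_+(G/H)$ must induce isomorphisms on $\pi_*^K$ for \emph{all} closed $K\leq G$, not just $K\in\cf$; this holds because $E\cf\times G/H\to G/H$ is a $G$-weak equivalence of spaces --- for $K\in\cf$ by contractibility of $E\cf^K$, and for $K\notin\cf$ because both $K$-fixed point spaces are empty, since any subgroup subconjugate to $H$ already lies in the family $\cf$.
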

\begin{proof}
Part (i) follows Proposition \ref{prop:BousfieldLocalization}.(a).
As the underlying $\infty$-category of a stable model category
$\gensp$ is stable and bicomplete (Propositions \ref{Prop:ModelCatStable} and \ref{Prop:ModelCategoryBicomplete}) and this also follows for the (exact) Bousfield (co-)localization $\gensp_{\cf, R}$ (Proposition \ref{prop:BousfieldLocalization}), showing (ii).
For part (iii) we remark that corepresentability of homotopy groups in $\gensp$ is well-known, but not well-documented in the literature. It suffices to check on fibrant spectra in the projective model structure of \cite{equivorth} (the $G$-$\Omega$ spectra) and in that case it follows by inspection.
We observe that the projection $E\cf\times G/H\rightarrow G/H$ is a weak $G$-equivalence for $H\in\cf$.
Hence $\suspsp_+(G/H)$ is an $\cf$-spectrum and we obtain isomorphisms
\[
	[\suspsp_+(G/H), X]_{\gensp_{\cf, R}}\cong [\suspsp_+(G/H), X\otimes \bs_R]_{\gensp}\cong(\pi^H_0X)\tensor R.
\]
These are sum-preserving functors in $X$ and as $H$ varies over the subgroups of $\cf$ they detect equivalences by definition, showing that the corepresenting objects form a set of compact generators.
\end{proof}

\subsection{The comparison functor}
We now describe the passage of geometric fixed points to a functor on underlying $\infty$-categories.
From now on $\cf$ is a family of \emph{finite} subgroups and we denote by $\preorbcatinv[\cf]\subset\preorbcatinv$ the full subcategory spanned by the subgroups $H\in \cf$.
Since $\Phi^H$ is homotopical (Corollary \ref{Cor:OrbModules}), the equivalence
\[
	\Phi^H(E\cf_+\otimes X\otimes \bs_R)\simeq (E\cf_+)^H\otimes (\Phi^HX)\otimes (\Phi^H\bs_R)\simeq(\Phi^HX)\otimes \bs_R
\]
shows that the functor
\(
	\Phi:G\hyph\orthspec\ra \Fun^{\cts}(\preorbcatinv[\cf], \orthspec)
\)
sends $R$-local $\cf$-equivalences to levelwise $R$-local equivalences.
Hence the composition with 
\begin{align*}
	N(\Fun^{\cts}(\preorbcatinv[\cf], \orthspec))\ra N_{\Delta}(\Fun^{\cts}(\preorbcatinv[\cf], \orthspec))
	&\ra\Fun(N_{\Delta}\preorbcatinv[\cf], N_{\Delta}\orthspec)	\\
	&\ra\Fun(N_{\Delta}\preorbcatinv[\cf], (N_{\Delta}\orthspec)[W_R^{-1}])	
\end{align*}
factors over a unique functor 
\(
	\gensp_{\cf, R}\rightarrow \preorbcatinv[\cf]\hyph\Sp_R
\)
on localizations, where we no longer distinguish between a topological category and its coherent nerve in the notation for the indexing category.
\begin{Definition}\label{Def:GeometricFixedPointsInfinity}
The \emph{induced geometric fixed point functor} on underlying $\infty$-categories is defined as the composition
\begin{equation*}
	\Phi:\gensp_{\cf, R}\rightarrow \preorbcatinv[\cf]\hyph\Sp_R\xrightarrow{\pi_!}\orbcatinv[\cf]\hyph\Sp_R,
\end{equation*}
where $\pi_!$ is the left Kan extension along the projection $\pi:\preorbcatinv[\cf]\rightarrow \orbcatinv[\cf]$ (see Definition \ref{Def:PreOrbCat}).
\end{Definition}

\begin{Proposition}
Geometric fixed points induce an exact and sum-preserving functor of stable $\infty$-categories such that the universal transformation
$\eta:\Id\rightarrow \pi^\ast\circ \pi_!$ makes the diagram of $\infty$-categories
\begin{center}
	\begin{tikzcd}[row sep = large, column sep = large]
		\gensp_{R,\cf}\arrow[r,"\Phi"]\arrow[rd, "\Phi^H"] 		&	\orbcatinv[\cf]\hyph\Sp_R	\arrow[d, "\ev_{G/H}"]	\\
													&	\Sp_R	
	\end{tikzcd}
\end{center}
commute for all $H\in \cf$.
\end{Proposition}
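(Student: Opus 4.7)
The plan is to factor $\Phi=\pi_!\circ\tilde\Phi$, where $\tilde\Phi:\gensp_{\cf,R}\to\preorbcatinv[\cf]\hyph\Sp_R$ denotes the intermediate functor constructed in the paragraph preceding Definition~\ref{Def:GeometricFixedPointsInfinity}, and then verify the three assertions separately.

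For exactness and sum-preservation, on the pointset level each $\Phi^H$ preserves strict mapping cones (as observed in the discussion before Remark~\ref{rmk:gfpmodels}), commutes with strict suspensions and shifts, and preserves arbitrary wedges (since fixed points distribute over coproducts in based $G$-spaces). Since exactness and arbitrary coproducts in the diagram $\infty$-category $\preorbcatinv[\cf]\hyph\Sp_R$ are detected pointwise, $\tilde\Phi$ is exact and sum-preserving. The Kan extension $\pi_!$ is a left adjoint between stable cocomplete $\infty$-categories, hence preserves all colimits and in particular is exact.

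For the commutation via $\eta$, I would first give a concrete description of the unit using Lemma~\ref{lemma:KanExtensionCocartesian}. In the skeletal presentation $\orbcatinv[\cf]\simeq\coprod_{(H)}BW_GH$ (Remark~\ref{Rmk:DisjointUnionWeyl}) and $\preorbcatinv[\cf]\simeq\coprod_{(H)}BN_GH$, the functor $\pi$ becomes the componentwise quotient $BN_GH\to BW_GH$ induced by $N_GH\twoheadrightarrow W_GH$, which is a fibration of Kan complexes and in particular a cocartesian fibration of $\infty$-groupoids. Lemma~\ref{lemma:KanExtensionCocartesian} then identifies
\[
(\pi_!\tilde\Phi(Y))(G/H)\simeq\colim_{BH}\bigl(\tilde\Phi(Y)\vert_{BH}\bigr)\simeq(\Phi^HY)_{hH},
\]
the homotopy orbits under the inner conjugation action of $H\hookrightarrow N_GH$ on $\Phi^HY$. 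Under this identification, evaluating the unit $\eta_{\tilde\Phi(Y)}$ at the object $H$ becomes the canonical coprojection $\Phi^HY\to(\Phi^HY)_{hH}$ into the homotopy orbits.

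Finally, I would show this coprojection is an $R$-local equivalence. By Corollary~\ref{Cor:OrbModules}(ii) the inner conjugation action of $H$ on $\pi_*(\Phi^HY)$ is trivial. Since $|H|$ is invertible in $R$ (as $H\in\cf$), the higher group homology of $H$ with $R$-module coefficients vanishes, so the homotopy orbit spectral sequence
\[
E^2_{s,t}=H_s(H;\pi_t(\Phi^HY))\Longrightarrow\pi_{s+t}((\Phi^HY)_{hH})
\]
collapses to the $s=0$ line, and the surviving edge homomorphism agrees with the map induced by the coprojection. Hence $\eta$ is an $R$-equivalence and the diagram commutes. The main non-formal input is precisely this last step: upgrading the $\pi_*$-level triviality of inner conjugations (Corollary~\ref{Cor:OrbModules}(ii)) to an actual equivalence of spectra, which is where the assumption that $|H|$ is invertible in $R$ for all $H\in\cf$ enters essentially.
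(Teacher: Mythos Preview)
Your proposal is correct and follows essentially the same route as the paper: identify $(\pi_!\tilde\Phi X)(G/H)$ with the homotopy orbits $(\Phi^HX)_{hH}$ via the fiberwise colimit formula, then use that the $H$-action on $\pi_*(\Phi^HX)$ is trivial together with invertibility of $\lvert H\rvert$ in $R$ to conclude that the coprojection is an equivalence. The paper obtains the fiberwise formula by invoking (the invertible-orbit analogue of) Lemma~\ref{Lemma:KanExtensionOrbit}, whereas you rederive it directly from the skeletal presentation $\preorbcatinv[\cf]\simeq\coprod_{(H)}BN_GH$ and Lemma~\ref{lemma:KanExtensionCocartesian}; and the paper states the isomorphism $(\pi_*Y)/H\cong\pi_*(Y_{hH})$ directly where you spell it out via the homotopy orbit spectral sequence. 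These are presentational differences only.
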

\begin{proof}
Lemma \ref{Lemma:KanExtensionOrbit} also holds for the invertible orbit category and hence $\eta$ factors over an equivalence
\[
	(\Phi^HX)_{hH}\overset{\simeq}{\longrightarrow}(\pi_!\Phi X)(G/H).
\]
For every $R$-local $Y\in \Sp_R^{BH}$ the projection $Y\rightarrow Y_{hH}$ induces an isomorphism $(\pi_\ast Y)/H\overset{\cong}{\longrightarrow}\pi_\ast(Y_{hH})$ since the order of $H$ is invertible.
We already know that algebraically the $H$-action is trivial for $Y=\Phi^HX$ (Corollary \ref{Cor:OrbModules}) and so the projection is an equivalence in that case.

Exactness of $\Phi$ is detected pointwise and hence follows from exactness of the functors $\Phi^H$,
which in turn follows because they preserve cone sequences on the pointset level.

\end{proof}

\begin{Remark}
The fact that the geometric fixed points a priori only yield $\preorbcatinv$-diagrams is a technical artifact of the pointset model
and the use of left Kan extensions is a way of correcting that 'defect'.
This should be regarded as separate from the later use of Kan extensions in Section \ref{Sec:MultEq},
where they form a conceptual part of the argument. The author is not aware of another lax monoidal construction that is either homotopical or can be derived on commutative ring spectra.
\end{Remark}

\subsection{The additive equivalence}
We now assume that the group orders of elements in $\cf$ are invertible in $R$.
In order to show that the functor $\Phi$ from Definition \ref{Def:GeometricFixedPointsInfinity} is an equivalence, we need the following computational input:
\begin{Lemma}\label{Lemma:FixedPointCorep}
For finite subgroups $H$ and $K$ of $G$ there is a natural isomorphism
\[
	[\suspsp_+(G/K)^H, Y]_{B(W_GH)\hyph\Sp_R}\cong\bigoplus_{(\hat H\leq K), \hat H \sim_G H}(\pi_0Y)^{W_K\hat H},
\]
where the sum is indexed by $K$-conjugacy classes of subgroups of $K$ that are conjugate to $H$ in $G$.

\end{Lemma}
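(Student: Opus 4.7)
The plan is to reduce the computation to a standard homotopy fixed point calculation via a three-step argument.

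First, use the $W_GH$-equivariant decomposition (\ref{Eqn:WeylGroupDecomposition}) to obtain an equivalence
\[
	\suspsp_+ (G/K)^H \simeq \bigvee_{\substack{(\hat H \leq K) \\ \hat H \sim_G H}} \suspsp_+ (W_GH/W_K\hat H)
\]
in $B(W_GH)\hyph\Sp_R$, where each $W_K\hat H$ is realised as a subgroup of $W_GH$ via a choice of element $g(\hat H)\in G$ with $\hat H = H^{g(\hat H)}$. Since mapping out of a wedge turns into a product, it suffices to identify each term $[\suspsp_+(W_GH/L), Y]$ with $(\pi_0 Y)^L$ for $L = W_K\hat H$.

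Second, recognise $\suspsp_+(W_GH/L)$ as the image of the unit sphere spectrum under the left adjoint of the restriction functor $B(W_GH)\hyph\Sp_R \rightarrow BL\hyph\Sp_R$. The induction-restriction adjunction then yields
\[
	\Map_{B(W_GH)\hyph\Sp_R}(\suspsp_+(W_GH/L), Y) \simeq \Map_{BL\hyph\Sp_R}(\bs, \res Y) \simeq Y^{hL},
\]
so taking components gives $[\suspsp_+(W_GH/L), Y] \cong \pi_0(Y^{hL})$.

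Third, since $L = W_K\hat H$ is a subquotient of $K \in \cf$, the order $|L|$ divides $|K|$ and is therefore invertible in $R$; as $Y$ is $R$-local, its homotopy groups are $R$-modules. Consequently $H^{>0}(L; \pi_\ast Y) = 0$ and the homotopy fixed point spectral sequence
\[
	E_2^{s,t} = H^s(L; \pi_t Y) \Rightarrow \pi_{t-s} Y^{hL}
\]
collapses onto the $s = 0$ line, yielding $\pi_0 Y^{hL} \cong (\pi_0 Y)^L$. Assembling the three steps produces the stated formula, with naturality in $Y$ clear from the construction.

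The main obstacle is the bookkeeping surrounding the non-canonical embeddings $W_K\hat H \hookrightarrow W_GH$ flagged after (\ref{Eqn:WeylGroupDecomposition}); this is routine once representatives have been fixed, and a different choice of $g(\hat H)$ only conjugates $L$ inside $W_GH$, leaving the fixed-point module $(\pi_0 Y)^L$ unchanged, which is why the answer is well-defined on $K$-conjugacy classes.
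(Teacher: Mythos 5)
Your argument is correct and follows essentially the same route as the paper: decompose $(G/K)^H$ via (\ref{Eqn:WeylGroupDecomposition}), identify each wedge summand $\suspsp_+(W_GH/W_K\hat H)$ as an induced sphere so that mapping out of it computes homotopy fixed points, and use invertibility of the group order to replace homotopy fixed points by algebraic fixed points on $\pi_0$. The only difference is cosmetic — you spell out the collapsing homotopy fixed point spectral sequence where the paper simply quotes this as a known fact.
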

\begin{proof}
We recall the decomposition formula (\ref{Eqn:WeylGroupDecomposition})
\[
	(G/K)^H\cong \coprod_{(\hat H\leq K), \hat H \sim_G H} W_GH/W_K{\hat H}.
\]
The statement then from follows from the general fact that the coset $G/H$ represents $H$-homotopy fixed points in $\Fun(BG, \Sp)$
and that for invertible group orders their homotopy groups are obtained by taking algebraic fixed points:
\[
	[\suspsp_+(G/H),X]_{\Fun(BG, \Sp)}\cong[\bs, X]_{\Fun(BH, \Sp)}\cong (\pi_0X)^H
\]
\end{proof}

\begin{Corollary}
The geometric fixed points $\{{\Phi(\suspsp_+(G/K))}\}_{K\in\mathcal F}$ form a set of compact generators for $\Orb_{\mathcal F}^{\times}\hyph\Sp_R$.
\end{Corollary}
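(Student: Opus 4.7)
The plan is to establish compactness and generation separately, using the mapping-group computation of Lemma \ref{Lemma:FixedPointCorep} together with the decomposition $\orbcatinv[\cf] \simeq \coprod_{(H)} B(W_GH)$ of Remark \ref{Rmk:DisjointUnionWeyl}. Under that equivalence a diagram $Y \in \orbcatinv[\cf]\hyph\Sp_R$ amounts to a tuple $(Y_H)$ with $Y_H \in \Sp_R^{B(W_GH)}$, and mapping spaces factor as products over conjugacy classes. First I would invoke the standard identification $\Phi^H(\suspsp_+ A) \simeq \suspsp_+(A^H)$ for a based $G$-space $A$, with the natural $W_GH$-action on $A^H$; together with the commuting triangle from the proposition preceding Lemma \ref{Lemma:FixedPointCorep}, this identifies the $H$-component of $\Phi(\suspsp_+(G/K))$ with $\suspsp_+((G/K)^H)$ in $\Sp_R^{B(W_GH)}$. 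Applying Lemma \ref{Lemma:FixedPointCorep} factor by factor then yields
\[
[\Phi(\suspsp_+(G/K)), Y]_{\orbcatinv[\cf]\hyph\Sp_R} \;\cong\; \prod_{(H)}\bigoplus_{\substack{\hat H \leq K,\\ \hat H \sim_G H}} (\pi_0 Y_H)^{W_K\hat H}.
\]

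For compactness I would observe that since $K$ is finite, only the finitely many conjugacy classes $(H)$ that admit some $\hat H \leq K$ with $\hat H \sim_G H$ contribute, and the inner sum is finite as well. Each order $|W_K\hat H|$ divides $|K|$ and is therefore invertible in $R$ by hypothesis; by Maschke's theorem the averaging idempotent $\tfrac{1}{|W_K\hat H|}\sum g$ exhibits $(-)^{W_K\hat H}$ as a retract of the identity on $R[W_K\hat H]$-modules. As the image of an idempotent direct sum of endomorphisms, invariants commute with arbitrary direct sums, and hence the finite expression above commutes with direct sums in $Y$. Thus every $\Phi(\suspsp_+(G/K))$ is compact.

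For generation, suppose $Y \not\simeq 0$, so $\pi_n Y_K \neq 0$ for some $K \in \cf$ and $n \in \bz$. Specializing the displayed formula to $H = K$, the only subgroup $\hat H \leq K$ with $\hat H \sim_G K$ is $\hat H = K$ itself (for cardinality reasons), and $W_K K$ is trivial, so $\pi_n Y_K$ appears as one factor of the product computing $[\Sigma^n \Phi(\suspsp_+(G/K)), Y]_{\orbcatinv[\cf]\hyph\Sp_R}$, which is therefore nonzero. The only point that really needs care is checking that the equivalence $\Phi^H(\suspsp_+(G/K)) \simeq \suspsp_+((G/K)^H)$ carries the correct $W_GH$-equivariant structure, so that the Weyl decomposition \eqref{Eqn:WeylGroupDecomposition} lines up with the indexing set in Lemma \ref{Lemma:FixedPointCorep}; the remainder is bookkeeping on a finite, semisimple direct sum.
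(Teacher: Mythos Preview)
Your proof is correct and follows essentially the same route as the paper: decompose $\orbcatinv[\cf]\hyph\Sp_R$ into the product over Weyl groups, identify the $H$-component of $\Phi(\suspsp_+(G/K))$ with $\suspsp_+((G/K)^H)$, and apply Lemma~\ref{Lemma:FixedPointCorep}. The paper is terser on both points---it just asserts that the corepresented functor is sum-preserving and contains $\pi_0(-)$ as a retract---whereas you spell out the Maschke idempotent for compactness and the explicit specialization $H=K$ (so that $\hat H = K$, $W_KK=1$) for generation; these are exactly the details underlying the paper's claims.
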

\begin{proof}
Under the equivalence (cf.\ Remark \ref{Rmk:DisjointUnionWeyl})
\[
\prod_{(H\in \cf)}\ev_{G/H}:\orbcatinv[\cf]\hyph\Sp_R\overset{\simeq}{\longrightarrow}\prod_{(H\in \cf)}B(W_GH)\hyph\Sp_R
\]
the diagram $\Phi(\suspsp_+(G/K))$ is mapped to $(\suspsp_+(G/K)^H)_{(H\in\cf)}$.
By the previous lemma, $\suspsp_+(G/K)^H$ corepresents a sum-preserving functor that contains $\pi_0(-)$ as a retract.
Hence it is compact and a generator for $B(W_GH)\hyph\Sp_R$.
\end{proof}

We now come to the main result of this section, describing $R$-local $\cf$-spectra in terms of geometric fixed points.

\begin{Theorem}\label{Theorem:AdditiveEquivalence}
Let $G$ be a compact Lie group, $\mathcal F$ a family of finite subgroups of $G$ and $R\subseteq \bbq$ a ring
such that the group orders of $\mathcal F$ are invertible in $R$. Then the geometric fixed point functor $\Phi$ induces an equivalence
of $\infty$-categories
\[
	\gensp_{\mathcal F, R}\simeq \orbcatinv[\cf]\hyph \Sp_R.
\]
\end{Theorem}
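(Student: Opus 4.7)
The plan is to invoke Proposition~\ref{prop:eqstablecats}. The functor $\Phi$ is exact and sum-preserving between stable $\infty$-categories admitting all sums, and by the preceding corollary it sends the compact generators $\{\suspsp_+(G/K)\}_{K\in\cf}$ of $\gensp_{\cf,R}$ to a set of compact generators of $\orbcatinv[\cf]\hyph\Sp_R$. It therefore suffices to verify that $\Phi$ is fully faithful on the full stable subcategory generated by these objects; concretely, that for all $K,L\in\cf$ and all $k\in\bz$ the induced map
\[
\Phi\colon [\suspsp_+(G/K),\Sigma^k\suspsp_+(G/L)]_{\gensp_{\cf,R}} \longrightarrow [\Phi\suspsp_+(G/K),\Sigma^k\Phi\suspsp_+(G/L)]_{\orbcatinv[\cf]\hyph\Sp_R}
\]
is an isomorphism of abelian groups.

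I would then compute both sides separately. The left-hand side is $(\pi_k^K\suspsp_+(G/L))\otimes R$, and since all relevant group orders are invertible the rational form of the tom~Dieck splitting yields a natural decomposition
\[
(\pi_k^K\suspsp_+(G/L))\otimes R \;\cong\; \bigoplus_{(\hat H\leq K)} H_k\bigl((G/L)^{\hat H}/W_K\hat H;R\bigr),
\]
indexed by $K$-conjugacy classes of subgroups of $K$ (all lying in $\cf$, since $\cf$ is closed under subconjugation). For the right-hand side, I would pass through the product decomposition $\orbcatinv[\cf]\hyph\Sp_R\simeq \prod_{(H\in\cf)}B(W_GH)\hyph\Sp_R$ of Remark~\ref{Rmk:DisjointUnionWeyl}, rewriting the Hom-group as a product over $H\in\cf$ of Hom-groups in $B(W_GH)\hyph\Sp_R$. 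Lemma~\ref{Lemma:FixedPointCorep} then expands each factor into a sum indexed by $(\hat H\leq K)$ with $\hat H\sim_G H$. Unfolding $\suspsp_+(G/L)^H$ via the Weyl decomposition~(\ref{Eqn:WeylGroupDecomposition}) and using invertibility of the Weyl group orders to replace homotopy orbits with strict orbits collapses the double indexing to a single sum over the same $K$-conjugacy classes $(\hat H\leq K)$, with summands $H_k((G/L)^{\hat H}/W_K\hat H;R)$.

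Thus both sides are abstractly isomorphic as graded $R$-modules, and the main obstacle is to show that $\Phi$ realizes this identification summand-by-summand: the $(\hat H)$-tom~Dieck summand on the left should be mapped to the $(\hat H)$-component on the right, located at $H$ any $G$-conjugate of $\hat H$ in $\cf$. I would verify this by tracing the comparison map~(\ref{GeomFixMap}) through its construction. The $(\hat H)$-summand of $\pi_k^K\suspsp_+(G/L)\otimes R$ is detected precisely by $\Phi^{\hat H}$: contributions from $\hat H'\gneq\hat H$ are packaged into other components of $\Phi$, while those from $\hat H'\lneq\hat H$ vanish on $\hat H$-fixed points by a freeness argument for the relevant universal-space models. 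The descent along $\pi_!$ in Definition~\ref{Def:GeometricFixedPointsInfinity} then matches the resulting class with the $W_K\hat H$-coinvariants on the right. Carrying out this bookkeeping is the nontrivial computational heart of the argument; once done, fully faithfulness follows and Proposition~\ref{prop:eqstablecats} delivers the theorem.
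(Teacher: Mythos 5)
Your global skeleton is the same as the paper's: reduce via Proposition \ref{prop:eqstablecats} and the corollary on compact generators to checking that $\Phi$ is fully faithful on the generators $\suspsp_+(G/K)$, $K\in\cf$ (and restricting attention to generator targets is indeed enough for that proposition, even though the paper actually verifies the statement for arbitrary targets $X$). The difference lies entirely in how that full faithfulness is established. The paper identifies the map on hom-groups, via Lemma \ref{Lemma:FixedPointCorep}, with the geometric fixed point comparison map $\pi_0^K(X)\to\prod_{(\hat H\leq K)}\bigl(\Phi_0^{\hat H}X\bigr)^{W_K\hat H}$ and then quotes \cite[3.4.28]{global}, which asserts this is an isomorphism once the group orders are inverted; no tom Dieck splitting of either side is needed. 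You propose instead to prove the isomorphism by hand for suspension spectra of orbits via the tom Dieck splitting and a summand-by-summand matching.

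That route can be made to work, but as written there are two problems. First, the step you yourself call the ``nontrivial computational heart'' --- that the comparison map is triangular with respect to subconjugacy with isomorphisms on the diagonal --- is only sketched: ``contributions from larger subgroups are packaged into other components'' is not an argument, and the needed vanishing of $\Phi^{\hat H}\circ\res^K_{\hat H}$ on tom Dieck summands indexed by classes into which $\hat H$ is not subconjugate, together with the identification of the diagonal entries with the invariants/coinvariants comparison, is exactly the content of the result the paper cites. So the proposal is incomplete precisely at the one non-formal input of the proof. Second, there is a genuine error in your computation of the two sides: for $R\subsetneq\bbq$ (e.g.\ $R=\bz[1/\lvert G\rvert]$) the tom Dieck summands and the groups delivered by Lemma \ref{Lemma:FixedPointCorep} are $R$-localized stable homotopy groups of the fixed-point spaces, of the shape $\bigl(\pi_k^{st}((G/L)^{\hat H})\otimes R\bigr)_{W_K\hat H}$ respectively the corresponding invariants, and not singular homology $H_k\bigl((G/L)^{\hat H}/W_K\hat H;R\bigr)$: inverting the group orders does not kill the stable stems, so your identification is valid only for $R=\bbq$ (and note that for compact Lie $G$ the spaces $(G/L)^{\hat H}$ are positive-dimensional homogeneous pieces, not finite sets). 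This second point is repairable without changing your strategy, but the first one must actually be carried out (or replaced by the citation the paper uses) before the argument proves the theorem.
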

\begin{proof}

For every orthogonal $G$-spectrum $X$, the components of the induced map
\begin{align*}
	\Phi:[\suspsp_+(G/K),X]_{G\hyph\Sp}	&\lra[\Phi (\suspsp_+(G/K)), \Phi X]_{\orbcatinv[\cf]\hyph\Sp}	\\
	&\cong \prod_{(H\leq G)}[\suspsp_+(G/K)^H, \Phi^H X]_{B(W_GH)\hyph\Sp}
\end{align*}
can be identified via Lemma \ref{Lemma:FixedPointCorep} with the geometric fixed point map (\ref{GeomFixMap})
\[
	\pi_0^K(X)\lra \prod_{(\hat H\leq K), \hat H \sim_G H}(\Phi_0^{\hat H}X)^{W_K\hat H}.
\]
As $H\in \mathcal F$ runs over the $G$-conjugacy classes in $\mathcal F$, the $\hat H$ hit every $K$-conjugacy class of $\mathcal F$ exactly once.
Hence the product of these maps is an isomorphism by \cite[3.4.28]{global}.
In particular, $\Phi$ is fully faithful on a set of compact generators and by the corollary these are also mapped to a set of compact generators.
Proposition \ref{prop:eqstablecats} then implies that $\Phi$ is an equivalence.
\end{proof}

\section{Commutative ring spectra}
We recall from Section \ref{Sec:ModelStructures} that the category of commutative $G$-ring spectra $\Com(G\hyph\orthspec)$ admits a transferred model structure,
that is weak equivalences and fibrations are detected by the forgetful functor $U:\Com(G\hyph\orthspec)\rightarrow G\hyph\orthspec$.
The $\infty$-category of \emph{genuine commutative $G$-ring spectra} 
\(
	\Com(\gensp)
\)
is the underlying $\infty$-category and $\Com(\gensp)_{\cf, R}$ denotes the localization at the $R$-local $\cf$-equivalences.

\begin{Remark}
Inverting the $\cf$-equivalences on commutative ring spectra can no longer be expressed via the right Bousfield localization functor
$E\cf_+\otimes -$ because there is no map $\Sigma^{\infty}_+E\cf \rightarrow \bs$ of commutative ring spectra (unless $\cf$ containes all subgroups).
\end{Remark}

Instead, we use the \emph{$\cf$-completion} $F(E\cf_+,-)$, which is a left Bousfield localization functor.
It restricts to an endofunctor of $\Com(\gensp)$ since it is modeled by a lax symmetric monoidal functor that is also right derivable on commutative ring spectra (the model structure is topological).
Commutative ring spectra can still be $R$-localized because the Moore spectrum $\bs_R$ admits the structure of a commutative ring spectrum, although this is a non-trivial fact (Proposition \ref{Prop:MooreSpectrumCommutative}).
Put together, we observe that $F(E\cf_+,-\otimes\bs_R)$ is a Bousfield localization identifying $\Com(\gensp)_{R,\cf}$ with the full subcategory of $R$-local $\cf$-complete commuative ring spectra.
In particular, it admits (small) colimits and limits, the latter of which are preserved by the forgetful functor
$\mathbb U:\Com(\gensp)_{R,\cf}\rightarrow \gensp_{R,\cf}$, which is a right adjoint (Proposition \ref{Prop:MonadicAdjunction}).
\subsection{The comparison functor}

Analogously to the additive case, geometric fixed points induce a functor
\begin{equation}
	\Phi_{\Com}:\Com(\gensp)_{\cf,R}\lra \preorbcat[\cf]\hyph\Com(\Sp)_R\overset{\pi_!}{\longrightarrow}\orbcat[\cf]\hyph\Com(\Sp)_R
\end{equation}
on underlying $\infty$-categories of commutative ring spectra, where $\pi$ is the projection from Definition \ref{Def:PreOrbCat}. To see that this 'forgets' to the functor of the previous section, we need the following lemmata.

\begin{Lemma}\label{Lemma:HomotopyOrbitsRingSpectra}
Let $K$ be a finite group and $S\in \Com(\Sp)^{BK}$ a commutative ring spectrum with $K$-action that is trivial on $\pi_\ast S$.
Then the projection map
\(
	S\rightarrow S_{hK}
\)
to the quotient in commutative rings is a $\bz[\frac{1}{\lvert K \rvert}]$-local equivalence.
\end{Lemma}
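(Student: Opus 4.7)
The plan is to reduce the lemma to a direct computation for free commutative ring spectra via the monadic bar resolution.

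First, I would observe that $\bz[1/\vert K\vert]$-localization of spectra is smashing and compatible with the commutative ring structure, so we may replace $\Com(\Sp)$ throughout by $\Com(\Sp_{\bz[1/\vert K\vert]})$. It then suffices to show that $S\to S_{hK}$ induces an equivalence of underlying spectra.

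Second, I would resolve $S\in \Com(\Sp)^{BK}$ by the simplicial bar construction associated to the comonad $\mathbb{P}U$ on $\Com(\Sp)$, where $\mathbb{P}\dashv U\colon \Sp\rightleftarrows \Com(\Sp)$ is the free/forgetful adjunction. This exhibits $S$ as the geometric realization of a simplicial object whose $n$-th term is a free commutative ring $\mathbb{P}(M_n)$ on some spectrum $M_n\in \Sp^{BK}$. A naturality argument shows that triviality of the $K$-action on $\pi_\ast S$ propagates to triviality of the $K$-action on each $\pi_\ast M_n$: the homotopy groups of an iterated free commutative ring are built functorially from symmetric powers and tensor products with diagonal action, which send trivial $K$-actions to trivial ones. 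Since $(-)_{hK}=\colim_{BK}$ commutes with all colimits, and in particular with geometric realizations, it suffices to check the statement levelwise on each $\mathbb{P}(M_n)$.

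Third, for the free case $\mathbb{P}(M)$ with $M\in \Sp^{BK}$ satisfying triviality of the $K$-action on $\pi_\ast M$: because $\mathbb{P}$ is a left adjoint, it commutes with the $BK$-colimit computed in $\Com(\Sp)$, yielding $\mathbb{P}(M)_{hK}^{\Com}\simeq \mathbb{P}(M_{hK}^{\Sp})$, with comparison map obtained by applying $\mathbb{P}$ to $M\to M_{hK}^{\Sp}$. The latter is a $\bz[1/\vert K\vert]$-equivalence under triviality of the $K$-action on $\pi_\ast M$, as the homotopy orbit spectral sequence degenerates after inverting $\vert K\vert$ onto $\pi_\ast M$ concentrated in filtration degree zero. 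Since $\mathbb{P}$ preserves $\bz[1/\vert K\vert]$-equivalences of cofibrant spectra, by the standard homotopy invariance of non-equivariant symmetric powers, the free case follows.

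The main obstacle is making the commutation of $(-)_{hK}^{\Com}$ with the geometric realization of the bar construction precise at the pointset level: although this is transparent in the $\infty$-categorical setting, the statement is formulated via pointset models and one must arrange cofibrant replacements levelwise so that the homotopy invariance of $\mathbb{P}$ may be applied. A smaller technicality is the careful bookkeeping of the propagation of triviality through the iterated free functor.
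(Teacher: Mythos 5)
Your overall architecture (bar-resolve by free commutative rings, commute homotopy orbits past the realization, handle the free case by left-adjointness of $\mathbb P$) is reasonable, and your step 3 is a perfectly good alternative to the way the paper treats the trivial-action situation. The genuine gap is the propagation claim in step 2. The homotopy groups of $\mathbb P(M)$ are built from extended powers $M^{\otimes m}_{h\Sigma_m}$, and $\pi_\ast$ of a smash power or extended power is \emph{not} a functor of $\pi_\ast M$: a self-map of $M$ inducing the identity on $\pi_\ast M$ need not induce the identity on $\pi_\ast(M^{\otimes m})$, let alone on $\pi_\ast(M^{\otimes m}_{h\Sigma_m})$; note also that inverting $\lvert K\rvert$ does not make the $\Sigma_m$-homotopy orbits algebraically accessible, since $m!$ is not inverted. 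So the assertion that triviality on $\pi_\ast S$ propagates to $\pi_\ast M_n$ for $M_n=U(\mathbb P U)^n S$ is exactly the kind of statement that needs proof; at best the K\"unneth and homotopy-orbit filtrations give an action that is unipotent on an associated graded, and upgrading that to triviality requires finiteness or completeness of the filtrations (delicate without connectivity hypotheses) together with invertibility of $\lvert K\rvert$ --- none of which appears in your argument.

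The clean way to close the gap is precisely the paper's first move, which your outline skips: the forgetful functor to spectra preserves limits, so $S^{hK}$ is computed on underlying spectra, and with $\lvert K\rvert$ invertible and the action trivial on $\pi_\ast S$ the homotopy fixed point spectral sequence collapses; hence the counit $(S^{hK})_{\tr}\to S$ is an equivalence in $\Com(\Sp)^{BK}$. This replaces $S$, up to equivalence of $BK$-diagrams, by a ring with genuinely (coherently) trivial $K$-action, after which triviality propagates through your bar resolution for free and your free-case argument applies. Alternatively, once the action is trivialized in this sense no resolution is needed at all: the paper concludes directly from the adjunction computation $[S_{hK},T]\cong[S,T^{hK}]\cong[S,T]$ for $\bz[\frac{1}{\lvert K\rvert}]$-local $T$. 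So your proposal is salvageable, but as written the essential homotopical input of the lemma is missing at the propagation step.
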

\begin{proof}
Homotopy fixed points are computed in the underlying category of spectra and so it follows by inspection that the canonical map
\[
	(S^{hK})_{\tr}\rightarrow S
\]
is an equivalence for $\bz[\frac{1}{\lvert K \rvert}]$-local $S$ with trivial action on homotopy groups.
We may thus assume that $S$ itself has trivial action and compute the functor corepresented by $S^{hK}$ in the homotopy category
as follows:
\[
	[S_{hK}, T]_{\Com(\Sp)}\cong[S, T]_{\Com(\Sp)^{BK}}\cong[S, T^{hK}]_{\Com(\Sp)}\cong[S, T]_{\Com(\Sp)}.
\]
This isomorphism is given by precomposition with the projection $S\rightarrow S_{hK}$,
which thus is an equivalence.
\end{proof}

\begin{Lemma}\label{Lemma:KanExtensionOrbit}
Let $\cc$ be an $\infty$-category with sufficiently many colimits.
The left Kan extension of a diagram $X:\preorbcat[\cf]\rightarrow \cc$ along the functor
\(
	\pi:\preorbcat[\cf]\rightarrow \orbcat[\cf]
\)
can be computed at $G/H$ as the quotient
\[
	(\pi_!X)(G/H)\simeq X(H)/H.
\]
\end{Lemma}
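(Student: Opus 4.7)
The plan is to reduce the computation to Lemma \ref{lemma:KanExtensionCocartesian} by verifying that $\pi$ is a cocartesian fibration and then identifying its fiber over $G/H$. Intuitively, $\pi: \preorbcat[\cf] \to \orbcat[\cf]$ is a kind of Grothendieck construction for the $\orbcat[\cf]$-diagram of classifying groupoids $H \mapsto BH$, so this strategy is the natural one.

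First, I would verify that $\pi$ is a cocartesian fibration. A morphism $\alpha: G/K \to G/L$ in the orbit category is determined by a coset $xL$ satisfying $x^{-1}Kx \leq L$, and a suitable choice of representative (essentially $x^{\pm 1}$, depending on the conjugation convention) defines a morphism $K \to L$ in $\preorbcat[\cf]$ whose image under $\pi$ is $\alpha$. Checking the universal property of a cocartesian lift amounts, after unwinding the composition law in $\preorbcat[\cf]$, to verifying that any further $h: K \to M$ in $\preorbcat[\cf]$ with $\pi(h) = \beta\circ\alpha$ admits a unique completion $L \to M$ lying over $\beta$, and that this completion is given explicitly as a product of $h$ with the chosen lift. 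This is a direct, if slightly tedious, calculation with the conjugation formulas defining $\pi$.

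Second, I would identify the fiber $\pi^{-1}(G/H)$. Under the convention that $\preorbcat[\cf]$ has one object per finite subgroup in $\cf$, the only object mapping strictly to $G/H$ is $H$ itself. A morphism $g: H \to H$ in $\preorbcat[\cf]$ (an element $g \in G$ with $H^g \leq H$) lies in the fiber precisely when $\pi(g) = \Id_{G/H}$, and reading off the formula for $\pi$ on cosets forces $g \in H$; the condition $H^g \leq H$ is then automatic. Hence the fiber is the classifying category $BH$ of the (discrete) finite group $H$.

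Combining these with Lemma \ref{lemma:KanExtensionCocartesian} yields
\[
	(\pi_! X)(G/H) \simeq \colim_{BH} X(H) \simeq X(H)/H,
\]
as claimed. The main obstacle is the notational bookkeeping in the verification of the cocartesian property; once that is settled, both the fiber identification and the final colimit computation are essentially immediate.
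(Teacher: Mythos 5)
Your overall strategy is the same as the paper's: show that $\pi$ is a cocartesian fibration, identify the fiber over $G/H$ with $BH$, and invoke Lemma \ref{lemma:KanExtensionCocartesian}. The fiber identification and the final step $\colim_{BH}X(H)\simeq X(H)/H$ are fine. The gap is in how you verify the cocartesian property. You treat $\preorbcat[\cf]$ and $\orbcat[\cf]$ as ordinary categories and check a strict unique-lifting condition, but the lemma is needed for a compact Lie group $G$, where both are genuinely \emph{topological} categories: for instance $\preorbcat[\cf](H,H)=N_GH$ and $\orbcat[\cf](G/K,G/L)=(G/L)^K$ can be positive-dimensional. The functor one must analyse is $\Ncoh(\pi)$ between coherent nerves, and there an edge is cocartesian not because of strict unique factorization but because the associated square of mapping spaces is a \emph{homotopy} pullback \cite[2.4.1.10]{htt}; moreover, before one can even speak of cocartesian edges or apply Lemma \ref{lemma:KanExtensionCocartesian}, one needs $\Ncoh(\pi)$ to be an inner (indeed categorical) fibration, which again is a condition on the mapping spaces, not on underlying sets of morphisms. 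A strict Grothendieck-opfibration argument would suffice only when $G$ is finite and the categories are discrete.

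The missing ingredient is exactly what the paper's proof supplies: the maps $\preorbcat[\cf](-,L)\rightarrow \orbcat[\cf](\pi(-),G/L)$ are projections to the quotient by a free action of the finite group $L$, hence covering maps and in particular Serre fibrations. This gives the fibrancy needed for $\Ncoh(\pi)$ to be a fibration of quasicategories, and it upgrades the strict pullback squares of mapping spaces (which is, in essence, your unique-lifting computation: $f=a^{-1}h$ is the unique strict solution, continuously in the data) to homotopy pullbacks, so that every morphism is cocartesian by \cite[2.4.1.10]{htt}. So your computation is a usable ingredient, but on its own it does not establish the cocartesian fibration statement in the topologically enriched setting; you need to add the local-fibration observation and phrase the cocartesian condition homotopically.
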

\begin{proof}
On the level of topological categories the functor $\pi$ induces Serre-fibrations on mapping spaces,
since for each $H\in\cf$ the induced map
\[
	\preorbcat[\cf](-,H)\rightarrow \orbcat[\cf](\pi(-),G/H)\cong \preorbcat[\cf](-,H)/H
\]
exhibits the target as the quotient of a free $H$-action.
This also shows that for every morphism $H\rightarrow K$ in $\preorbcat[\cf]$ the square
\begin{center}
	\begin{tikzcd}
	\preorbcat[\cf](K,L)\arrow[r]\arrow[d]			&	\preorbcat[\cf](H,L)\arrow[d]	\\
	\orbcat[\cf](G/K,G/L)\arrow[r]				&	\orbcat[\cf](G/H,G/L)
	\end{tikzcd}
\end{center}
is a (homotopy) pullback and hence every morphism is cocartesian by \cite[2.4.1.10]{htt}. 
By Lemma \ref{lemma:KanExtensionCocartesian} the Kan extension can be computed pointwise as the colimit over the fibers $\pi^{-1}(G/H)\simeq BH$.
\end{proof}

\begin{Proposition}
Geometric fixed points induce a functor $\Phi^{\Com}$
on commutative ring spectra such that the square
\begin{center}
	\begin{tikzcd}
	\gensp_{\mathcal F, R}	\arrow[r, "\Phi"]	&	\Orb^{\times}_{\mathcal F}\hyph \Sp_R		\\
	\Com(\gensp)_{\mathcal F, R}	\arrow[r, "\Phi^{\Com}"]\arrow[u, "\mathbb U"]	&	\Orb_{\mathcal F}\hyph \Com(\Sp)_R	\arrow[u, "\mathbb U"]
	\end{tikzcd}
\end{center}
commutes.
\end{Proposition}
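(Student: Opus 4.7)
The plan is to reduce the commutativity of the square to a pointwise comparison at each $G/H\in \orbcatinv[\cf]$, relying essentially on Lemmata \ref{Lemma:HomotopyOrbitsRingSpectra} and \ref{Lemma:KanExtensionOrbit}. Denote by $\Phi^{\Com}_{\mathrm{pre}}$ and $\Phi_{\mathrm{pre}}$ the pre-Kan-extension functors landing in $\preorbcat[\cf]\hyph\Com(\Sp)_R$ and $\preorbcatinv[\cf]\hyph\Sp_R$ respectively, so that $\Phi^{\Com}=\pi_!\,\Phi^{\Com}_{\mathrm{pre}}$ and $\Phi=\pi_!\,\Phi_{\mathrm{pre}}$ (with the second $\pi$ being the invertible projection $\preorbcatinv[\cf]\to\orbcatinv[\cf]$).

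First, write $\iota:\preorbcatinv[\cf]\hookrightarrow \preorbcat[\cf]$ and $\iota':\orbcatinv[\cf]\hookrightarrow \orbcat[\cf]$ for the inclusions of the iso-subcategories, so that the right vertical $\mathbb U$ factors as $(\iota')^{\ast}$ followed by the pointwise ring-forgetful functor. I would observe that, at the pre-orbit level, restricting $\Phi^{\Com}_{\mathrm{pre}}$ along $\iota$ and then forgetting the ring structure reproduces $\Phi_{\mathrm{pre}}\circ\mathbb U$: the isomorphisms of $\preorbcat[\cf]$ coincide with the morphisms of $\preorbcatinv[\cf]$, so $\iota^{\ast}$ discards exactly the norm morphisms, and $\Phi^{\Com}_{\mathrm{pre}}$ was built from $\Phi_{\mathrm{pre}}$ by adjoining those norms together with the lax symmetric monoidal structure required to land in commutative rings.

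Second, I would compare the two Kan extensions via pointwise formulas. The proof of Lemma \ref{Lemma:KanExtensionOrbit} adapts verbatim to the invertible projection (the fibers are again of the form $BH$), so the value of either Kan extension at $G/H$ is modeled by the $H$-homotopy orbits of the value of the input diagram at the object $H$. This yields a pointwise identification $(\iota')^{\ast}\pi_{!}\simeq \pi_{!}\iota^{\ast}$ on the diagrams in question.

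The main work is the third step: showing that the forgetful functor $\mathbb U:\Com(\Sp)_R\to \Sp_R$ commutes with the $H$-homotopy orbits that compute these Kan extensions on the diagrams arising from geometric fixed points. For a general commutative ring spectrum with $H$-action the two kinds of quotients disagree, but Corollary \ref{Cor:OrbModules}(ii) guarantees that the inner-conjugation $H$-action on $\pi_{\ast}(\Phi^H R)$ is trivial, and Lemma \ref{Lemma:HomotopyOrbitsRingSpectra} then forces the projection from $\Phi^H R$ to its quotient in commutative rings to be an $R$-local equivalence (using that $|H|$ is invertible in $R$). The same triviality on homotopy groups together with invertibility makes the corresponding projection in spectra an equivalence as well, so both quotients recover $\Phi^H R$. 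Chaining the three identifications produces the desired pointwise equivalence of composites, and naturality in $G/H$ upgrades this to an equivalence of functors. The main obstacle is this last step: without the vanishing of the inner-conjugation action on $\pi_{\ast}(\Phi^H R)$ combined with inverting the group orders in $\cf$, the ring-theoretic and the spectrum-theoretic quotients would fail to coincide, and the square would not commute.
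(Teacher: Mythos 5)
Your proposal follows the paper's own route: factor both comparison functors through the pre-orbit diagram categories, observe that the square at the pre-orbit level commutes essentially by construction of $\Phi^{\Com}$, and check the remaining Kan-extension square pointwise via the homotopy-orbit formula of Lemma \ref{Lemma:KanExtensionOrbit} together with Lemma \ref{Lemma:HomotopyOrbitsRingSpectra} and the triviality of the action on $\pi_\ast(\Phi^H R)$ from Corollary \ref{Cor:OrbModules}. The only cosmetic difference is that the paper first constructs the global comparison $\pi_!\,\mathbb U\rightarrow\mathbb U\,\pi_!$ from the unit and counit of the Kan-extension adjunction and then verifies that it is a pointwise equivalence, whereas you assemble pointwise identifications and appeal to naturality afterwards; in the $\infty$-categorical setting the transformation should be produced globally first, exactly as the paper does.
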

\begin{proof}
We consider the diagram
\begin{center}
	\begin{tikzcd}
	\gensp_{\mathcal F, R}	\arrow[r]		& 	\preorbcatinv[\mathcal F]\hyph \Sp_R\arrow[r, "\pi_!"]		&\Orb^{\times}_{\mathcal F}\hyph \Sp_R		\\
	\Com(\gensp)_{\mathcal F, R}	\arrow[r]\arrow[u, "\mathbb U"]	&	\preorbcat[\cf]\hyph\Com(\Sp)_R\arrow[r, "\pi_!"]\arrow[u, "\mathbb U"]	&
	\Orb_{\mathcal F}\hyph \Com(\Sp)_R	\arrow[u, "\mathbb U"]
	\end{tikzcd}
\end{center}
in which the left square commutes strictly on the pointset level. Unit and counit of the left Kan extension induce a natural transformation 
\[
	\pi_! \bu\ra\pi_!\bu(\pi^\ast\pi_!)=(\pi_!\pi^\ast)\bu\pi_!\ra\bu\pi_!
\]
between the two composites in the right square, which is an equivalence by the pointwise description of Kan extensions and Lemma \ref{Lemma:HomotopyOrbitsRingSpectra}.
\end{proof}

\subsection{The multiplicative equivalence}\label{Sec:MultEq}
In order to show that the comparison functor $\Phi^{\Com}$ is an equivalence, we need to analyse the free-forgetful adjunctions.
Let $\mathbb P:\gensp\rightarrow \Com(\gensp)$ be the derived symmetric algebra functor,
which is left adjoint to the forgetful functor $\mathbb U$.
The underlying homotopy type of $\mathbb PX$ is given by the sum of derived symmetric powers
\(
	\mathbb U( \mathbb PX)\simeq \bigoplus_{m\geq 0}\mathbb P^m X.
\)
For a positive flat orthogonal $G$-spectrum $X$ the projection map 
\[
	{E_G\Sigma_m}_+\wedge_{\Sigma_m}X^{\wedge m}\overset{\simeq}{\lra} X^{\wedge n}/\Sigma_n
\]
is a $\underline\pi_\ast$-isomorphism and this yields an identification with the equivariant extended powers
\[
	\mathbb P^m \simeq {E_G\Sigma_m}_+\wedge_{\Sigma_m}(-)^{\otimes m}.
\]

\begin{Lemma}
The functor $\mathbb P$ preserves $R$-local $\cf$-equivalences.
\end{Lemma}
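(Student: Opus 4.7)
The plan is to reduce to the geometric-fixed-point criterion and compute. First, since weak equivalences of commutative $G$-ring spectra are detected on their underlying $G$-spectrum, and the underlying decomposition $\mathbb{U}\mathbb{P}X \simeq \bigvee_{m \geq 0} \mathbb{P}^m X$ identifies each summand with the extended power $\mathbb{P}^m X \simeq (E_G \Sigma_m)_+ \wedge_{\Sigma_m} X^{\wedge m}$ on positively flat cofibrants, the lemma reduces to showing that each $\mathbb{P}^m$ preserves $R$-local $\cf$-equivalences. By Theorem~\ref{Theorem:AdditiveEquivalence} combined with Corollary~\ref{Cor:OrbModules}, this is equivalent to showing that $\Phi^H(\mathbb{P}^m f)$ is an $R$-local equivalence of non-equivariant spectra for every $H \in \cf$ whenever $f$ is an $R$-local $\cf$-equivalence.

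Next I would analyze $\Phi^H(\mathbb{P}^m X)$. Using that $\Phi^H$ commutes with wedges and with $\Sigma_m$-orbit quotients (as a colimit), satisfies $\Phi^H(A \wedge Y) \cong A^H \wedge \Phi^H Y$ for $G$-spaces $A$, and is homotopically monoidal on cofibrants (Proposition~\ref{Prop:GfpMonoidal}), the computation reduces to understanding the $\Sigma_m$-space $(E_G \Sigma_m)^H$ together with the $\Sigma_m$-equivariant smash power $(\Phi^H X)^{\wedge m}$. The description of $E_G \Sigma_m$ as a universal space for graph subgroups (cf.\ Remark~\ref{Rmk:Error}) then yields an explicit decomposition of $\Phi^H(\mathbb{P}^m X)$ as a wedge (indexed by $\Sigma_m$-conjugacy classes of homomorphisms $\alpha \colon H \to \Sigma_m$) whose pieces are, up to residual finite group quotients coming from centralizers of $\alpha(H)$ in $\Sigma_m$, smash products of $\Phi^{K}X$ for subgroups $K \leq H$, namely the stabilizers of the $H$-orbits on $\{1,\ldots,m\}$ induced by $\alpha$.

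Because $\cf$ is a family and $H \in \cf$, each such $K$ again lies in $\cf$, so $\Phi^{K}(f)$ is an $R$-local equivalence. Finite smash products of $R$-local equivalences between cofibrants remain $R$-local equivalences, and both the residual finite-group (co)limits and the wedge over conjugacy classes of $\alpha$ preserve $R$-local equivalences. Assembling these observations yields the desired conclusion. The main obstacle I anticipate is the explicit point-set identification of $\Phi^H(\mathbb{P}^m X)$ with the indicated wedge formula: this is essentially a Hill--Hopkins--Ravenel style computation and is the only genuinely non-formal input.
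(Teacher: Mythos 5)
Your argument is correct, but it is not the route the paper's proof takes; it is in fact the alternative the paper explicitly mentions in one sentence at the end of its proof. The paper's primary argument is purely formal and never touches geometric fixed points: writing an $R$-local $\cf$-equivalence as a map $f$ such that $E\cf_+\otimes f\otimes\bs_R$ is a $\underline\pi_\ast$-isomorphism, it uses that the genuine extended power ${E_G\Sigma_m}_+\wedge_{\Sigma_m}(-)$ commutes with $A\otimes(-)$ for $\Sigma_m$-trivial $A$, together with the $(G\times\Sigma_m)$-equivalences $\Delta:E\cf\to(E\cf)^{\times m}$ and $\bs_R\simeq(\bs_R)^{\otimes m}$, to identify $E\cf_+\otimes\mathbb P^m X\otimes\bs_R$ with the extended power of $E\cf_+\otimes X\otimes\bs_R$; preservation of equivalences is then immediate from the homotopical behaviour of the genuine extended powers recorded in Remark~\ref{Rmk:Error}. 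Your route instead reduces via the geometric fixed point criterion (here Corollary~\ref{Cor:OrbModules}(i) plus $\Phi^H(-\otimes\bs_R)\simeq\Phi^H(-)\otimes\bs_R$ suffices; the appeal to Theorem~\ref{Theorem:AdditiveEquivalence} is not needed) and then invokes the wedge decomposition of $\Phi^H({E_G\Sigma_m}_+\wedge_{\Sigma_m}X^{\wedge m})$ over conjugacy classes of $\alpha:H\to\Sigma_m$. That decomposition is exactly what Proposition~\ref{Prop:FreeCommutative} establishes (via Lemma~\ref{Lemma:FixedPointsQuotient}, Lemma~\ref{Lemma:NormRestriction}, and Propositions~\ref{Prop:GfpMonoidal}, \ref{Prop:GfpNorm}); note that your phrase ``$\Phi^H$ commutes with $\Sigma_m$-orbit quotients'' is too loose as stated -- what one really uses is the fixed-points-of-a-free-quotient formula plus the norm untwisting, not commutation with colimits. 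What each approach buys: the paper's proof is short, self-contained at this point of the text, and needs no computation; yours front-loads the main HHR-style computation of geometric fixed points of free commutative algebras, which the paper needs later anyway, so it recycles that work at the cost of forward-referencing the hardest ingredient (there is no circularity, since Proposition~\ref{Prop:FreeCommutative} does not use this lemma, but the logical order would have to be rearranged).
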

\begin{proof}
Since ${E_G\Sigma_m}_+\wedge_{\Sigma_m}(-)$ commutes with $A\otimes (-)$ for $\Sigma_n$-trivial $A$,
this follows from the $(G\times \Sigma_m)$-equivalences $\Delta:E\cf\overset{\simeq}{\longrightarrow} (E\cf)^{\times m}$
and $\mathbb S_{\bbq}\simeq \mathbb (S_{\bbq})^{\otimes m}$. The latter is induced by the composition
\(
\mathbb S \simeq \mathbb S^{\otimes m}\rightarrow (\mathbb S_{\bbq})^{\otimes m},
\)
where the first identification comes from the unit isomorphism on the pointset level.
Alternatively, the statement also follows from the computation in the proof of Proposition \ref{Prop:FreeCommutative}.
\end{proof}

\begin{Proposition}\label{Prop:MooreSpectrumCommutative}
The Moore spectrum $\bs_R$ can be constructed as a commutative $G$-ring spectrum.
\end{Proposition}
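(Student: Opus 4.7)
The plan is to construct a non-equivariant model of $\bs_R$ in $\Com(\orthspec)$ and to transfer it to the equivariant setting via the constant-diagram (trivial-action) functor $\orthspec \to G\hyph\orthspec$. This functor is strong symmetric monoidal and left Quillen (both cofibrations and weak equivalences are detected on underlying spectra), so it lifts to a functor on commutative ring spectra. It thus suffices to produce the construction non-equivariantly and then to verify the equivariant homotopy type of its image.

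For the non-equivariant step, I would invert the primes that are invertible in $R$ one at a time via pushouts in $\Com(\orthspec)$. Given such a prime $p$, form the pushout
\[
\begin{tikzcd}[ampersand replacement=\&]
\Sym(\bs) \ar[r, "y\mapsto 0"] \ar[d, "y\mapsto py-1"'] \& \bs \ar[d] \\
\Sym(\bs) \ar[r] \& \bs[1/p]
\end{tikzcd}
\]
in commutative orthogonal ring spectra, where $\Sym(\bs)$ denotes the free commutative orthogonal ring spectrum on a cofibrant model of the sphere with its canonical degree-$0$ generator $y$ (so that $\pi_0\Sym(\bs) \cong \bz[y]$) and both arrows are the ring maps classified by the indicated images of $y$. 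The pushout represents the universal commutative $\bs$-algebra in which $p$ becomes invertible, so $\pi_0(\bs[1/p]) \cong \bz[1/p]$. Iterating over the countable collection of primes to be inverted and taking the filtered colimit in $\Com(\orthspec)$ produces a commutative orthogonal ring spectrum model for $\bs_R$, using that filtered colimits of commutative ring spectra along cofibrations are computed on underlying spectra.

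For the homotopy-type verification, I would apply the tom Dieck splitting to the image with trivial $G$-action: for a finite subgroup $H\in\cf$,
\[
\pi_*^H(\bs_R) \cong \bigoplus_{(K\leq H)} \pi_*\bigl(\bs_R\smashp\suspsp_+ BW_HK\bigr).
\]
Since $|H|$ (and hence each $|W_HK|$) is invertible in $R$ and $\bs_R$ is $R$-local, each classifying-space factor becomes equivalent to $\bs_R$ after smashing, and the sum collapses. Comparing with the analogous splitting of $\pi_*^H(\bs)\otimes R$ yields the required identification $(\underline\pi_*\bs)\otimes R \cong \underline\pi_*\bs_R$.

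The main obstacle is controlling the derived pushout in $\Com(\orthspec)$: the homotopy groups of $\Sym(\bs)$ are not a polynomial ring but contain contributions from all $\suspsp_+ B\Sigma_n$, so a direct $\pi_*$-computation is non-trivial. The cleanest route around this is to argue via the universal property of the pushout characterizing $\bs[1/p]$ as the initial commutative $\bs$-algebra in which $p$ acts invertibly, or equivalently to appeal to the theory of left Bousfield localizations of commutative monoids in a positive monoidal model category (for instance White's symmetrizability framework), which yields $\bs_R$ directly as a fibrant replacement in an $R$-local transferred model structure on $\Com(\orthspec)$.
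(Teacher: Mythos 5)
The weak link is the transfer step, i.e.\ the claim that it ``suffices to produce the construction non-equivariantly and then verify the equivariant homotopy type of its image,'' with the verification done by a tom Dieck splitting. The point-set inflation (trivial-action) functor $\epsilon^\ast:\Com(\orthspec)\to\Com(G\hyph\orthspec)$ is indeed strong symmetric monoidal and left Quillen (though not for the reason you give: genuine weak equivalences are \emph{not} detected on underlying spectra; one argues instead via the right adjoint, point-set $G$-fixed points, which preserves (acyclic) fibrations). The real problem is that the underlying \emph{genuine} $G$-homotopy type of $\epsilon^\ast A$, for $A$ a cofibrant commutative ring model of the non-equivariant $\bs_R$, is not determined by the non-equivariant homotopy type of $A$: the underlying spectrum of a cofibrant commutative ring is only (positively) flat, and inflation is not homotopical for genuine equivalences on flat spectra. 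Concretely, cofibrant commutative rings are built from symmetric powers, and for positively cofibrant $X$ one has $(\epsilon^\ast X)^{\smashp n}/\Sigma_n\simeq {E_G\Sigma_n}_+\smashp_{\Sigma_n}(\epsilon^\ast X)^{\smashp n}$ (the genuine extended power, cf.\ Remark \ref{Rmk:Error}), which is \emph{not} the derived inflation of $X^{\smashp n}/\Sigma_n\simeq{E\Sigma_n}_+\smashp_{\Sigma_n}X^{\smashp n}$. For example, for $G=C_2$ and $X$ a cofibrant sphere, $\Phi^{C_2}\bigl((\epsilon^\ast X)^{\smashp 2}/\Sigma_2\bigr)\simeq \Sigma^\infty_+B\Sigma_2\vee\bs$ (the extra summand coming from the norm $N_e^{C_2}$, via Proposition \ref{Prop:FreeCommutative}), whereas the derived inflation of $\Sym^2\bs\simeq\Sigma^\infty_+B\Sigma_2$ has geometric fixed points $\Sigma^\infty_+B\Sigma_2$; this discrepancy survives $R$-localization. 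So $U\epsilon^\ast A$ need not agree with the derived inflation of $UA$, and your tom Dieck computation is a computation of the homotopy of the \emph{intended answer} (the equivariant Moore spectrum, i.e.\ the derived inflation of the non-equivariant Moore spectrum), not of the object you actually constructed. Nothing in the proposal bridges this.

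A repair is possible but requires exactly the non-formal input you have bypassed: writing $A$ as a colimit of cell attachments in commutative rings, one must show that the inflated attaching maps $\Sym(\epsilon^\ast Z)\to\bs$ are $R$-local $\cf$-equivalences, i.e.\ that genuine extended powers (equivalently, norms along subgroups in $\cf$) of $R$-acyclic spectra are $R$-locally $\cf$-acyclic. That is precisely the Lemma preceding the Proposition in the paper (``$\mathbb P$ preserves $R$-local $\cf$-equivalences''), and once you invoke it you are essentially running the paper's proof, which works directly in the $\infty$-category of genuine commutative $G$-rings, killing all maps from mod-$l$ Moore spectra with $l$ invertible in $R$ by pushouts along $\bp(Z_n)\to\bs$ and checking that the resulting colimit is $R$-local and $R$-locally $\cf$-equivalent to $\bs$. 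Note also that even after the repair the identification is only to be expected for subgroups in $\cf$ (group orders inverted), which is all the paper needs. Your worries about the non-equivariant pushout $\Sym(\bs)\to\bs[1/p]$ are legitimate but secondary; the standard telescope construction of $\bs_R$ as a commutative ring, or the paper's cell-attachment argument run non-equivariantly, avoids them.
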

\begin{proof}
We implement the sketch of \cite{EquivClosure} in the $\infty$-category of genuine commutative $G$-rings.
Let $A=\colim_{n\in\bn} A_n$ be the colimit (starting with $A_0=\bs$) over the lower maps in the pushouts
\begin{center}
	\begin{tikzcd}
	\bp(Z_n)\arrow[r]\arrow[d]	&	\bp(\ast)\simeq\bs\arrow[d]	\\
	A_n		\arrow[r]		&	A_{n+1},
	\end{tikzcd}
\end{center}
where $Z_n=\bigoplus_{\alpha}\bs/l(\alpha)$ is a sum of mod $l$ Moore spectra indexed by the homotopy classes of maps
$\alpha:\bs/l(\alpha)\rightarrow A_n$ such that $l(\alpha)$ is invertible in $R$.
By construction, for any mod l Moore spectrum such that $l$ is invertible in $R$ the induced map
\[
	[\bs/l,A_n]\rightarrow [\bs/l,A_{n+1}]
\]
on homotopy classes is trivial. It follows that $A$ is $R$-local, since sequential colimits of commutative ring spectra are computed in
the underlying category of spectra (this follows from the corresponding pointset level statement because cofibrations of commutative ring spectra are in particular h-cofibrations of underlying spectra).
By the previous lemma, the upper horizontal maps are $R$-equivalences. Since the pushout in commutative ring spectra is given by the derived relative smash product $A_{n+1}\simeq A_n\otimes_{\bp(Z_n)}\bs$, the lower maps are also $R$-equivalences and hence so is the composition $\bs \rightarrow A\simeq \bs_R$.
\end{proof}

\begin{Proposition}\label{Prop:MonadicAdjunction}
The Quillen adjunction $\Sym:G\hyph\orthspec\rightleftarrows \Com(G\hyph\orthspec):U$ induces a monadic adjunction
\[
	\mathbb P:\gensp_{\cf, R}\rightleftarrows \Com(\gensp)_{\cf, R}:\mathbb U
\]
on underlying $\infty$-categories.
\end{Proposition}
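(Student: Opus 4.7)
The plan is to invoke the $\infty$-categorical Barr-Beck-Lurie monadicity theorem (\cite[4.7.3.5]{ha}). First, I would construct the derived adjunction on the localized $\infty$-categories: Proposition \ref{Prop:DerivedAdjunction} derives the Quillen pair $(\Sym, U)$ to an adjunction $(\mathbb P, \mathbb U)$ between the underlying $\infty$-categories $\gensp$ and $\Com(\gensp)$. Since weak equivalences in $\Com(G\hyph\orthspec)$ are by construction those maps whose image under $U$ is a $\underline\pi_\ast$-isomorphism, the functor $\mathbb U$ tautologically preserves (and in fact reflects) $R$-local $\cf$-equivalences, and $\mathbb P$ does so by the preceding lemma. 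Lemma \ref{Lemma:AdjunctionLocalization} then descends the adjunction to $\gensp_{\cf,R}$ and $\Com(\gensp)_{\cf,R}$, and the same detection property immediately yields conservativity of $\mathbb U$, which is the first Barr-Beck-Lurie hypothesis.

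The second hypothesis --- that $\mathbb U$ preserve colimits of $\mathbb U$-split simplicial objects --- is where the real work lies. I would first note that $\Com(\gensp)_{\cf,R}$ is presentable as a Bousfield localization of the underlying $\infty$-category of a combinatorial model category, so that all small colimits exist. It thus suffices to verify preservation, for which I would prove the stronger statement that $\mathbb U$ preserves all sifted colimits. The key computational input is the derived formula
\[
	\mathbb U \mathbb P(X) \simeq \bigoplus_{n\geq 0}(E_G\Sigma_n)_+\smashp_{\Sigma_n}X^{\otimes n}
\]
for the monad on $\gensp_{\cf,R}$, which follows from the corrected identification of the derived symmetric powers with equivariant extended powers on positively flat spectra (Remark \ref{Rmk:Error}). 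Each component of this formula preserves sifted colimits: the $n$-fold tensor power does because $\otimes$ is cocontinuous in each variable and the diagonal is cofinal in the product of any sifted indexing category with itself; the $\Sigma_n$-equivariant extended power is itself a left adjoint; and coproducts commute with colimits.

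The main obstacle is to promote the sifted-cocontinuity of $\mathbb U\mathbb P$ to the same statement for $\mathbb U$ itself. My approach is model-theoretic: I would show that in the (positive) flat or complete model structure on $\Com(G\hyph\orthspec)$, sifted homotopy colimits of termwise positively flat simplicial diagrams can be computed on the underlying orthogonal spectra. Via the explicit formula for $\Sym$ this reduces to the same cofinality and cocontinuity observations as above, combined with the fact that each $E_G\Sigma_n$-twisted symmetric power preserves sifted colimits of flat spectra --- precisely the pointset-level homotopical analysis flagged in Remark \ref{Rmk:Error}. Cofibrantly replacing an arbitrary simplicial diagram in $\Com(\gensp)_{\cf,R}$ then yields the conclusion: $\mathbb U$ preserves sifted colimits, hence in particular $\mathbb U$-split geometric realizations, and the Barr-Beck-Lurie hypotheses are satisfied.
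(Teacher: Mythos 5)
Your first half is exactly the paper's argument: derive the Quillen pair via Proposition \ref{Prop:DerivedAdjunction}, note that $\mathbb U$ preserves and detects $R$-local $\cf$-equivalences by definition of the transferred model structure while $\mathbb P$ preserves them by the preceding lemma, descend via Lemma \ref{Lemma:AdjunctionLocalization}, and feed conservativity of $\mathbb U$ into Barr--Beck--Lurie. The gap is in the second hypothesis. Sifted-cocontinuity of the composite $\mathbb U\mathbb P$ does not yield sifted-cocontinuity of $\mathbb U$: the statement ``the forgetful functor from algebras over a sifted-colimit-preserving monad creates sifted colimits'' is only available once you know that $\Com(\gensp)_{\cf,R}$ \emph{is} the $\infty$-category of algebras over that monad, which is precisely the monadicity you are trying to establish -- so this reduction is circular unless replaced by an argument on the pointset level. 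You acknowledge the obstacle, but the proposed fix (``via the explicit formula for $\Sym$ this reduces to the same cofinality and cocontinuity observations'') names the wrong mechanism: knowing that $U\Sym=\bigvee_n \Sym^n$ commutes with sifted colimits says nothing by itself about how colimits of \emph{algebras} are computed. What is actually needed, and what the paper proves in the appendix, is that $U$ creates geometric realizations on the nose: one shows the interchange $\lvert X\rvert^{\smashp n}\cong\lvert X^{\smashp n}\rvert$, deduces $\lvert\bp X\rvert\cong\bp\lvert X\rvert$, and then handles a general simplicial commutative ring spectrum through its canonical coequalizer presentation $R\leftarrow \bp R\leftleftarrows \bp(\bp R)$.

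Even granting that, there is a second missing ingredient: homotopical control of the pointset realization on both sides. Cofibrant commutative ring spectra are not cofibrant as underlying spectra (their underlying spectra are merely flat, and latching maps are only h-cofibrations), and geometric realization of simplicial objects in topological orthogonal spectra is not homotopical without goodness/Reedy-type hypotheses; this is why the paper reduces to finite $G$ and $R=\bz$ and passes through a zig-zag of Quillen equivalences to symmetric spectra in simplicial sets, where realization preserves all levelwise equivalences. Your appeal to Remark \ref{Rmk:Error} does not fill this hole: that remark concerns homotopy invariance of the symmetric powers $\Sym^n$ (the $E_G\Sigma_n$ versus $E\Sigma_n$ issue), not the creation of simplicial colimits by $U$ or the homotopical behaviour of realization. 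With the interchange-plus-coequalizer argument and a setting in which realization is homotopical supplied, your plan closes up and coincides with the paper's proof; without them it does not go through.
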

\begin{proof}
By the previous lemma the derived adjunction
\(
	\mathbb P:\gensp\rightleftarrows \Com(\gensp):\mathbb U
\)
passes to an adjunction on localizations (cf.\ Lemma \ref{Lemma:AdjunctionLocalization}).
It is monadic because $\bu$ detects equivalences (by definition) and preserves homotopy colimits of simplicial objects (see the appendix).
\end{proof}

The left adjoint of the forgetful functor $\bu:\orbcat[\cf]\hyph\Com(\gensp)_R\rightarrow\orbcatinv[\cf]\hyph\Sp_R$
is given by the composition
\begin{equation}
	\orbcatinv[\cf]\hyph\Sp_R\xrightarrow{\iota_!}\orbcat[\cf]\hyph\Sp_R\xrightarrow{\bp}\orbcat[\cf]\hyph\Com(\Sp)_R
\end{equation}
of left Kan extension along $\iota:\orbcatinv[\cf]\rightarrow\orbcat[\cf]$ with the objectwise free commutative algebra functor.

\begin{Remark}
The adjunction is also monadic because $\iota$ is essentially surjective and colimits in diagram categories are computed pointwise.
\end{Remark}
We will need a more concrete description of the left adjoint:
\begin{Lemma}\label{Lemma:FreeCommutative}
Let $\mathcal C$ be an $\infty$-category that admits sufficently many colimits and $X:\Orb^{\times}_{\mathcal F}\rightarrow \mathcal C$ a functor.
\begin{itemize}
\item [(i)]
The left Kan extension $\iota_!X:\Orb_{\mathcal F}\rightarrow \mathcal C$ of $X$ to the full $\cf$-orbit category evaluated at $G/K$ can be computed by the formula
\[
	(\iota_! X)(G/K)\simeq \coprod_{(H\leq K)}X(G/H)/{W_KH}.
\]
On the summand indexed by $H$ the map is induced by the composition
\[
	X(G/H)\overset{\eta}{\longrightarrow}(\iota_! X)(G/H)\xrightarrow{\tr_H^K}(\iota_! X)(G/K),
\]
where $\eta:X\rightarrow \iota^\ast\iota_! X$ is the universal transformation.
\item [(ii)]
Let $\mathcal C$ be symmetric monoidal and suppose that the product $\otimes$ preserves coproducts in each variable.
Then the free commutative algebra is described by the following formula:
\[
	\mathbb P((\iota_! X)(G/K))\simeq\coprod_{\substack{(\alpha:K\rightarrow \Sigma_m),\\ m\geq 0}}\left(\bigotimes_{[j]\in K\backslash\{1,\ldots,m\}}X(G/\Stab_K j)\right)/C(\alpha).
\]
Here the identification is given on each summand by the product of the maps
\[
	X(G/\Stab_K j)\longrightarrow \mathbb P((\iota_! X)(G/\Stab_K j)) \xrightarrow{\bp\tr} \mathbb P((\iota_! X)(G/K))
\]
composed with the multiplication map.
\end{itemize}
\end{Lemma}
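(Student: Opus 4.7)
For part (i), I would apply the pointwise formula for left Kan extensions (\ref{eqn:PointwiseFormula}) to $\iota\colon \Orb^\times_{\mathcal F}\hookrightarrow \Orb_{\mathcal F}$:
\[
(\iota_!X)(G/K) \simeq \colim_{\Orb^\times_{\mathcal F,/G/K}} X,
\]
where the slice has as objects pairs $(G/H', \phi\colon G/H'\to G/K)$ with $\phi$ a morphism in the full orbit category, and as morphisms the isomorphisms in $\Orb^\times_{\mathcal F}$ commuting with the structure maps to $G/K$. Using the Weyl group decomposition (\ref{Eqn:WeylGroupDecomposition}), $G$-equivariant maps $G/H'\to G/K$ correspond to points of the $W_GH'$-set $(G/K)^{H'}\cong \coprod_{(\hat H\leq K),\,\hat H\sim_G H'} W_GH'/W_K\hat H$. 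Hence the slice decomposes as a disjoint union of $\infty$-groupoids indexed by $K$-conjugacy classes of subgroups $\hat H\leq K$ in $\mathcal F$, each equivalent to $B(W_K\hat H)$ via (\ref{Eqn:WeylGroupAutomorphism}). The colimit then splits as the claimed coproduct of homotopy quotients, and tracing the universal cocone yields the description of the structure map on the $H$-summand.

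For part (ii), I would exploit that $\mathbb P$ is a left adjoint and thus commutes with colimits — in particular with coproducts, which it sends to tensor products in commutative algebras. Applied to the formula from (i):
\[
\mathbb P((\iota_!X)(G/K)) \simeq \bigotimes_{(H\leq K)} \mathbb P(X(G/H)/W_KH).
\]
Using the standard identifications $\mathbb P(Y)\simeq \bigoplus_{n\geq 0} Y^{\otimes n}/\Sigma_n$ and $\mathbb P(Y/G)\simeq \bigoplus_n Y^{\otimes n}/(\Sigma_n\wr G)$ for a $G$-action on $Y$, together with distributivity of $\otimes$ over $\bigoplus$, this yields
\[
\mathbb P((\iota_!X)(G/K)) \simeq \bigoplus_{(n_H)_{(H\leq K)}} \bigotimes_{(H)} X(G/H)^{\otimes n_H}/(\Sigma_{n_H}\wr W_KH).
\]
The final step is a Burnside-style reindexing: $\Sigma_m$-conjugacy classes of homomorphisms $\alpha\colon K\to\Sigma_m$ are in bijection with tuples $(n_H)$ (with $m=\sum n_H[K:H]$) via the orbit decomposition of the induced $K$-set on $\{1,\ldots,m\}$. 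Under this bijection $\bigotimes_{[j]\in K\backslash\{1,\ldots,m\}} X(G/\Stab_Kj)\cong \bigotimes_{(H)}X(G/H)^{\otimes n_H}$, and the centralizer $C(\alpha)=\Aut_K(\{1,\ldots,m\}_\alpha)$ decomposes as $\prod_{(H)}(\Sigma_{n_H}\wr W_KH)$, matching the formula stated in the lemma.

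The hardest part will be the slice category analysis in (i), specifically verifying at the $\infty$-categorical level that each connected component truly has the homotopy type of $B(W_K\hat H)$; this requires checking that the relevant mapping spaces in the topological category $\Orb^\times_{\mathcal F}$, restricted to those morphisms compatible with the structure map to $G/K$, reduce to the discrete group $W_K\hat H$. The remainder is mostly bookkeeping: tracing the universal cocones to match the concrete structure maps in (i), and verifying the combinatorial identification of the centralizer $C(\alpha)$ with $\prod_{(H)}(\Sigma_{n_H}\wr W_KH)$ via a direct computation with automorphisms of $K$-sets.
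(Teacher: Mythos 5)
Your proposal is correct and follows essentially the same route as the paper: the pointwise Kan extension formula together with the identification of the slice as a disjoint union of pieces $B(W_K\hat H)$ for (i), and distributivity of $\mathbb P$ over coproducts and quotients followed by the Burnside-type reindexing $C(\alpha)\cong\prod_{(H)}\Sigma_{n_H}\wr W_KH$ for (ii). The step you flag as hardest is resolved in the paper by straightening the slice left fibration $(\Orb_{\cf})_{/(G/K)}\rightarrow\Orb_{\cf}$, so that the piece over $H$ is the homotopy orbit $((G/K)^H)_{hW_GH}\simeq\coprod_{(\hat H\leq K),\,\hat H\sim_G H}B(W_K\hat H)$; this is cleaner than a direct mapping-space analysis, since for a compact Lie group $G$ the object spaces of the slice need not be discrete, and only the homotopy quotient statement (not discreteness of morphism spaces) is what is actually needed.
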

\begin{proof}
We identify the slice category appearing in the pointwise formula for Kan extensions (\ref{eqn:PointwiseFormula}) as follows:
\begin{align*}
	\Orb_\cf^{\times}\times_{\Orb_\cf}(\Orb_\cf)_{/(G/K)}	&\simeq\coprod_{(H\in \cf)} B(W_GH)\times_{\Orb_\cf}(\Orb_\cf)_{/(G/K)}	\\
											&\simeq\coprod_{(H\in \cf)}\colim_{B(W_GH)}\Orb_\cf(-,G/K)			\\
											&\simeq\coprod_{(H\in \cf)}((G/K)^H)_{hW_GH}						\\
											&\simeq\coprod_{(H\leq K)}B(W_KH).
\end{align*}
In the first step $\orbcatinv[\cf]$ is decomposed into a disjoint union of Weyl groups (Remark \ref{Rmk:DisjointUnionWeyl}).
The second equivalence uses that the left fibration $(\Orb_\cf)_{/(G/K)}\rightarrow \Orb_\cf$
is classified by the functor $\Orb_\cf(-,G/K)$ and that the total space of a left fibration is equivalent to the colimit of its classifying diagram in spaces \cite[3.3.4.6]{htt}.
The last equivalence follows from the decomposition formula used in the proof of Lemma \ref{Lemma:FixedPointCorep}.

For part (ii) we recall the following distributivity formula for the free commutative algebra,
which follows from its description $\mathbb PX\simeq\coprod_{n\geq 0}X^{\otimes n}/\Sigma_n$ via symmetric powers \cite[3.1.3]{ha}.
Let $\{X_i\}_{i\in I}$ be a collection of objects indexed by a finite set $I$.
Then there is canonical equivalence
\[
	\mathbb P\left(\coprod_{i\in I}X_i\right)\simeq\coprod_{\alpha\in\bn^I}\left(\bigotimes_{i\in I}X_i^{\otimes \alpha_i}/\Sigma_{\alpha_i}\right)
\]
that is induced on the summands by the product of the maps $X_i\rightarrow \bp X_i$.
We now use the conjugacy classes of subgroups as the indexing set $I$ and write $M_\alpha$ for the finite $K$-set
\[
	M_\alpha=\coprod_{(H\leq K)}(K/H)^{\sqcup\alpha_H}
\]
associated with an $I$-tuple $\alpha$ and note that (\ref{Eqn:WeylGroupAutomorphism}) yields an identification
\[
	\Aut_K(M_\alpha)\cong \prod_{(H\leq K)}\Sigma_{\alpha_H}\wr W_KH
\]
of the automorphisms group of $M_{\alpha}$.
Applying the distributivity formula to part (i), we obtain the following chain of equivalences:
\begin{align*}
	\mathbb P\left(\coprod_{(H\leq K)}X(G/H)/W_KH\right)
	&\simeq\coprod_{\alpha\in\bn^I}\left(\bigotimes_{(H\leq K)}X(G/H)^{\otimes \alpha_H}/(\Sigma_{\alpha_H}\wr W_KH)\right)	\\
	&\simeq\coprod_{\alpha\in\bn^I}\left(\bigotimes_{Kj\in K\backslash M_\alpha}X(G/\Stab_Kj)\right)/\Aut_K(M_\alpha)			\\
	&\simeq\coprod_{[M]\in \Fin_K}\left(\bigotimes_{Kj\in K\backslash M}X(G/\Stab_Kj)\right)/\Aut_K(M)			\\
	&\simeq\coprod_{\substack{(\phi:K\rightarrow\Sigma_m)\\ m\geq 0}}\left(\bigotimes_{Kj\in K\backslash \{1,\ldots,m\}}X(G/\Stab_Kj)\right)/C(\phi).
\end{align*}
The last three steps amount to reindexing and regrouping terms, using that the map $\alpha\mapsto M_\alpha$ is a bijection from $\bn^I$ to isomorphism classes of finite $K$-sets, which in turn biject 
with conjugacy classes of homomorphisms $K\rightarrow\Sigma_m$ for all $m\geq 0$.
\end{proof}
\begin{Remark}
We have formulated the previous lemma in its natural generality, using the intrinsic notion of symmetric monoidal $\infty$-categories.
Ultimately, we want to use it to identify the homotopy type of free commutative orthogonal ring spectra.
Since for cofibrant orthogonal spectra the map
\[
	{E\Sigma_n}_+\smashp_{\Sigma_n} X^{\smashp n}\overset{\simeq}{\longrightarrow} X^{\smashp n}/\Sigma_n
\]
is an equivalence, the derived symmetric algebra functor agrees with symmetric algebra in
the symmetric monoidal structure on the underlying $\infty$-category.
\end{Remark}
%

We will also need the following well-known decomposition formula for the fixed points of a quotient, which is often used in equivariant homotopy theory.
\begin{Lemma}[e.g.\ see {\cite[B.17]{global}}]\label{Lemma:FixedPointsQuotient}
Let $X$ be an $(H\times K^{\op})$-space such that the right $K$-action is free. Then the projection $X\rightarrow X/K$ induces a homeomorphism
\[
	(X/K)^H\cong\coprod_{(\alpha:H\rightarrow K)}(\alpha^\ast X)^H/C(\alpha)
\]
between the $H$-fixed points of the quotient $X/K$ and a disjoint union indexed by the conjugacy classes of group homomorphisms.
\end{Lemma}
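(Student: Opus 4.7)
The plan is to exploit the freeness of the $K$-action to associate a canonical conjugacy class of homomorphism $H\to K$ to each $H$-fixed orbit. Given $[x]\in (X/K)^H$ and $h\in H$, the points $h\cdot x$ and $x$ lie in the same $K$-orbit, so by freeness there exists a \emph{unique} element $\alpha_x(h)\in K$ with $h\cdot x = x\cdot \alpha_x(h)$. A comparison of $(h_1h_2)\cdot x$ with $h_1\cdot(h_2\cdot x)$ combined with uniqueness shows that $\alpha_x\colon H\to K$ is a group homomorphism.

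Next I would track how $\alpha_x$ depends on the choice of representative: replacing $x$ by $x\cdot k$ conjugates $\alpha_x$ by $k$, so the conjugacy class $[\alpha_x]$ is a well-defined invariant of the orbit $[x]$. This produces a map from $(X/K)^H$ to the set of conjugacy classes of homomorphisms $H\to K$. Its fiber over a chosen representative $\alpha$ consists of those orbits admitting a lift $x\in X$ satisfying $h\cdot x = x\cdot\alpha(h)$ for all $h\in H$, which is precisely the condition that $x$ lie in $(\alpha^\ast X)^H$ (equivalently, $x$ is fixed by the graph subgroup $\Gamma(\alpha)\leq H\times K^{\op}$ acting on $X$). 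Two such lifts $x$ and $x\cdot k$ represent the same orbit iff $k$ centralizes $\alpha(H)$, so the fiber is naturally identified with $(\alpha^\ast X)^H/C(\alpha)$. Summing over conjugacy classes yields the asserted set-theoretic bijection.

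The main subtlety is promoting this bijection to a homeomorphism. An inverse is induced by the evident continuous maps $(\alpha^\ast X)^H\hookrightarrow X\twoheadrightarrow X/K$, which factor through $(\alpha^\ast X)^H/C(\alpha)$ since $C(\alpha)$ acts freely on $(\alpha^\ast X)^H$ by restriction of the $K$-action. To verify that the resulting continuous bijection out of the coproduct is open, I would observe that the preimage of $(X/K)^H$ under $X\to X/K$ is the union of the $K$-translates of the closed subspaces $(\alpha^\ast X)^H$, and that the restriction of the quotient to each such $(\alpha^\ast X)^H$ coincides with the quotient by the free $C(\alpha)$-action; in the compactly generated weak Hausdorff setting this matching of quotient topologies upgrades the bijection to the required homeomorphism.
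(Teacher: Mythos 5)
Your argument is correct, and it is worth noting that the paper itself offers no proof of this lemma at all: it is quoted from the literature (Schwede's book, Prop.\ B.17), and the standard proof given there proceeds by exactly the analysis you carry out --- using freeness to attach to each $H$-fixed orbit the unique ``twisting'' homomorphism $\alpha_x$, checking it is a homomorphism whose conjugacy class is an invariant of the orbit, and identifying the fiber over $[\alpha]$ with $(\alpha^\ast X)^H/C(\alpha)$ via the centralizer computation $\alpha_{x\cdot k}=k^{-1}\alpha_x k$. So you have supplied the argument the paper delegates to the reference. The only place where your write-up is thinner than it should be is the final topological step. The clean way to finish is: the orbit projection $q\colon X\rightarrow X/K$ is an open map, and the restriction of an open quotient map to a saturated subspace is again an open quotient map onto its image; the saturation of $(X/K)^H$ is the union of the fixed-point sets $(\alpha'^\ast X)^H$ over \emph{all} homomorphisms $\alpha'$ in the relevant conjugacy classes, and these are pairwise disjoint (by uniqueness of $\alpha_x$) and closed (equalizers in a weak Hausdorff space). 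When there are only finitely many homomorphisms $H\rightarrow K$ --- which is the case in which the lemma is applied, $H$ finite and $K=\Sigma_m$ --- each piece is then also open in the union, so the saturation is literally the coproduct of these closed pieces, and passing to the $K$-quotient identifies it with $\coprod_{(\alpha)}(\alpha^\ast X)^H/C(\alpha)$ as claimed. In the generality of infinite (compact Lie) $H$ and $K$ one would additionally need that homomorphisms close to each other are conjugate in order to see that the pieces are open in the saturation; since the paper only invokes the lemma for finite groups, your argument covers everything that is actually used.
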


\begin{Lemma}\label{Lemma:NormRestriction}
Let $\alpha:K\rightarrow \Sigma_m$ be a transitive group homomorphism with stabilizer $H=\Stab_K 1$. There is a natural untwisting isomorphism 
\[
	N_H^K\res^K_HX\cong \alpha^\ast X^{\wedge m}
\]
for orthogonal $K$-spectra $X$.
\end{Lemma}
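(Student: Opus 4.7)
The plan is to write down an explicit untwisting isomorphism on underlying orthogonal spectra and verify $K$-equivariance by unwinding the defining relation of the homomorphism $\Psi_b$. First I would fix coset representatives $g_1, \ldots, g_m \in K$ of $K/H$ compatible with $\alpha$, so that the induced action of $K$ on $K/H \cong \{1, \ldots, m\}$ coincides with $\alpha$. Recall from the construction of $\Psi_b$ that for any $k \in K$ we have $\Psi_b(k) = (\sigma; h_1, \ldots, h_m) \in \Sigma_m \wr H$ characterized by $\sigma = \alpha(k)$ together with the identity $k \, g_i = g_{\sigma(i)} \, h_i$ with $h_i \in H$.

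I would then define the untwisting map $\phi \colon N_H^K \res^K_H X \to \alpha^{\ast} X^{\wedge m}$ on underlying orthogonal spectra levelwise by
\[
\phi_V(x_1 \wedge \cdots \wedge x_m) = (g_1 \cdot x_1) \wedge \cdots \wedge (g_m \cdot x_m),
\]
where each $g_i$ acts through the given $K$-action on $X(V)$. This is automatically natural in $X$, compatible with the spectrum structure maps (since the $K$-action is), and an isomorphism with inverse assembled from the $g_i^{-1}$. Equivariance reduces to a direct check: the $\Psi_b$-action places $h_i \, x_i$ at position $\sigma(i)$ of $k \cdot (x_1, \ldots, x_m)$, so $\phi$ produces $g_{\sigma(i)} \, h_i \, x_i = k \, g_i \, x_i$ at that position; on the other side, the action on $\alpha^{\ast} X^{\wedge m}$ combines the diagonal $K$-action with the permutation $\sigma$, sending the $i$-th factor $g_i \, x_i$ of $\phi(x_1, \ldots, x_m)$ to $k \cdot (g_i \, x_i) = k \, g_i \, x_i$ at position $\sigma(i)$. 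The two composites agree.

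The argument is essentially bookkeeping, so there is no serious obstacle. The only care needed is to pin down the wreath-product convention that makes the square defining $\Psi_b$ commute and to interpret $\alpha^{\ast} X^{\wedge m}$ as the restriction of the natural $(K \times \Sigma_m)$-action on the smash product (diagonal $K$-action combined with permutation) along the graph homomorphism $k \mapsto (k, \alpha(k))$. Once these conventions are fixed, the identity $k \, g_i = g_{\sigma(i)} \, h_i$ immediately yields equivariance, and no homotopical input is required.
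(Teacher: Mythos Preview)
Your proof is correct and follows essentially the same idea as the paper's: the untwisting isomorphism is left translation by the element $(\Id;g_1,\ldots,g_m)\in\Sigma_m\wr K$, which amounts to showing that the homomorphism $\Psi_b:K\to\Sigma_m\wr H\hookrightarrow\Sigma_m\wr K$ and the graph homomorphism $k\mapsto(\alpha(k),k,\ldots,k)$ are conjugate in $\Sigma_m\wr K$. The paper phrases this as a two-step conjugation (first by $(\Id;k_1,\ldots,k_m)$, then by a permutation matching the $\Sigma_m$-component to $\alpha$), whereas you absorb the second step by choosing the coset representatives compatible with $\alpha$ from the outset; your explicit levelwise formula and direct equivariance check are just an unpacked version of the conjugation argument.
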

\begin{proof}
Let $k_1,\ldots, k_m$ be the coset representatives chosen in the definition of the norm with associated homomorphism $\Psi:K\rightarrow \Sigma_m\wr H$.
As a homomorphism to $\Sigma_m\wr K$, this is conjugate via $(\Id;k_1,\ldots, k_m)\in\Sigma_m\wr K$ to the homomorphism $(\psi,\Delta_K)$,
where $\psi=\pr_{\Sigma_m}\circ \Psi$ is the $\Sigma_m$-component of $\Psi$ and $\Delta_K$ the diagonal inclusion of $K$.
This is in turn conjugate to $\alpha$ via the permutation $i\mapsto \alpha(k_i)(1)$ and hence left translation with these elements yields the desired isomorphism
\(
	\Psi^\ast X^{\smashp m}\cong \alpha^\ast X^{\smashp m}.
\)
\end{proof}

\begin{Proposition}\label{Prop:FreeCommutative}
The natural transformation
\[
	(\mathbb P\circ \iota_!)\circ\Phi\overset{\simeq}{\lra} \Phi_{\Com}\circ\mathbb P
\]
adjoint to
\[
	\Phi \xrightarrow{\Phi\eta} \Phi\circ\mathbb U\circ\mathbb P\simeq\mathbb U\circ\Phi_{\Com}\circ\mathbb P
\]
is an equivalence.
\end{Proposition}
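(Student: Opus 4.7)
My plan is to evaluate the natural transformation at each $G/K$ with $K\in\cf$ and verify it is an equivalence in $\Com(\Sp)_R$. After a cofibrant replacement of $X$ in the positive flat model structure, the free commutative ring spectrum has underlying homotopy type $\mathbb PX\simeq\bigvee_{m\geq 0}E_G\Sigma_{m+}\smashp_{\Sigma_m}X^{\smashp m}$. First I would commute $\Phi^K$ with the wedge (which is already true on the point-set level for positive flat spectra) and then apply Lemma \ref{Lemma:FixedPointsQuotient} levelwise to the free right $\Sigma_m$-action on $E_G\Sigma_{m+}\smashp X^{\smashp m}$, obtaining a decomposition
\[
\Phi^K(\mathbb PX)\;\simeq\;\coprod_{m\geq 0}\;\coprod_{(\alpha\colon K\to\Sigma_m)}\Phi^K\bigl(\alpha^\ast(E_G\Sigma_{m+}\smashp X^{\smashp m})\bigr)/C(\alpha).
\]

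Next I would untwist each summand. Decomposing the $K$-set $\{1,\dots,m\}$ into its $\alpha$-orbits $\coprod_{[j]\in K\backslash\{1,\dots,m\}} K/\Stab_K j$ and iterating Lemma \ref{Lemma:NormRestriction} identifies $\alpha^\ast X^{\smashp m}$ with $\bigsmashp_{[j]}N_{\Stab_K j}^K\res^K_{\Stab_K j} X$. The $E_G\Sigma_m$ factor, being a universal space for graph subgroups, has contractible $\Gamma(\alpha)$-fixed points and hence contractible $K$-fixed points after pullback along $\alpha$, so it disappears after applying $\Phi^K$. Propositions \ref{Prop:GfpMonoidal} and \ref{Prop:GfpNorm} then convert $\Phi^K$ of the smash product of norms into $\bigsmashp_{[j]}\Phi^{\Stab_K j}X$, and the resulting disjoint union
\[
\coprod_{\substack{(\alpha\colon K\to\Sigma_m),\\ m\geq 0}}\Bigl(\bigsmashp_{[j]\in K\backslash\{1,\dots,m\}}\Phi^{\Stab_K j}X\Bigr)/C(\alpha)
\]
matches exactly the formula for $(\mathbb P\circ\iota_!)(\Phi X)(G/K)$ from Lemma \ref{Lemma:FreeCommutative}(ii), giving an abstract equivalence of the two sides on underlying spectra.

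The remaining, genuinely non-formal task, which I expect to be the main obstacle, is to verify that this composite of identifications realizes the specific natural transformation in the statement, i.e.\ the one adjoint to $\Phi\eta\colon\Phi\to\Phi\circ\mathbb U\circ\mathbb P$. I would address this summand by summand: the $\alpha$-summand of $(\mathbb P\circ\iota_!)(\Phi X)(G/K)$ is constructed via Lemma \ref{Lemma:FreeCommutative} from the transfer-inflation maps $\Phi^{\Stab_K j}X=(\iota_!\Phi X)(G/\Stab_K j)\to(\iota_!\Phi X)(G/K)$ followed by the multiplication of the free algebra, and these must be matched with the maps on $\Phi^K(\mathbb PX)$ induced by the point-set inclusion $X^{\smashp m}\hookrightarrow\mathbb PX$ combined with the norm construction of Definition \ref{def:norms}. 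Concretely this amounts to checking that the norm map $\Phi^{\Stab_K j}X\to\Phi^K(N_{\Stab_K j}^KX)$ and the lax monoidal structure map $\mu^{\phi K}$ interact correctly with the homomorphism $\Psi_b\colon K\to\Sigma_m\wr\Stab_K j$ used in the definition of the norm, which is a diagram chase best carried out in the point-set model.
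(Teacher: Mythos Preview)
Your approach is essentially the paper's: apply Lemma~\ref{Lemma:FixedPointsQuotient} to the extended power, untwist via Lemma~\ref{Lemma:NormRestriction}, then use Propositions~\ref{Prop:GfpMonoidal} and~\ref{Prop:GfpNorm} to reduce to smash products of $\Phi^{\Stab_K j}X$, and match against Lemma~\ref{Lemma:FreeCommutative}(ii).

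One point deserves care. You say the $E_G\Sigma_m$ factor ``disappears after applying $\Phi^K$'' because $(E_G\Sigma_m)^{\Gamma(\alpha)}$ is contractible, and then take a quotient by $C(\alpha)$. The paper instead records that $(E_G\Sigma_m)^{\Gamma(\alpha)}$ is a model for $EC(\alpha)$ and writes the summand as ${EC(\alpha)}_+\smashp_{C(\alpha)}\bigl(\bigsmashp_{[j]}\Phi^{\Stab_K j}X\bigr)$. This is not a cosmetic difference: the quotient in Lemma~\ref{Lemma:FreeCommutative}(ii) is the $\infty$-categorical quotient (homotopy orbits), whereas Lemma~\ref{Lemma:FixedPointsQuotient} produces a strict point-set quotient. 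Retaining the $EC(\alpha)$ factor is exactly what makes the strict quotient represent the homotopy orbits; if you drop it first you must separately argue that the $C(\alpha)$-action on $\bigsmashp_{[j]}\Phi^{\Stab_K j}X$ is suitably free, which is not automatic. Relatedly, the paper replaces $X$ by an HHR-cofibrant spectrum rather than a positive flat one, since Propositions~\ref{Prop:GfpMonoidal} and~\ref{Prop:GfpNorm} are stated under that hypothesis.

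Your final paragraph, isolating the verification that the chain of identifications realizes the specified adjoint transformation, is a genuine point that the paper's proof passes over in the single sentence ``The result now follows from Lemma~\ref{Lemma:FreeCommutative}.'' Your proposed strategy---tracing the summand maps back to the norm of Definition~\ref{def:norms} and the lax monoidal structure $\mu^{\phi K}$---is the right one, and the paper implicitly relies on the explicit description of the maps in Lemma~\ref{Lemma:FreeCommutative}(ii) for this.
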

\begin{proof}
Let $H\leq G$ be a finite subgroup.
The following pointset level computation only depends on the $H$-homotopy type of $X$ and
hence we may assume that it is a cofibrant $H$-spectrum in the HHR-model structure.
We then obtain:


\begin{align*}
		\Phi^H({E_G\Sigma_m}_+\wedge_{\Sigma_m}X^{\wedge m})&\cong \bigvee_{(\alpha:H\rightarrow\Sigma_m)}(E_G\Sigma_m)^{\Gamma(\alpha)}_+\wedge_{C(\alpha)}\Phi^H(\alpha^\ast X^{\wedge m})	\\
		&\cong \bigvee_{(\alpha:H\rightarrow\Sigma_m)}{EC(\alpha)}_+\wedge_{C(\alpha)}\Phi^H\left(\bigwedge_{[j]\in H\backslash\{1,\ldots, m\}}N_{\Stab j}^HX\right)	\\
		&\simeq \bigvee_{(\alpha:H\rightarrow\Sigma_m)}{EC(\alpha)}_+\wedge_{C(\alpha)}\left(\bigwedge_{[j]\in H\backslash\{1,\ldots, m\}}\Phi^{\Stab j}X\right).
\end{align*}
Since $\Phi^H$ is defined pointwise via fixed points of spaces, the first isomorphism follows from Lemma \ref{Lemma:FixedPointsQuotient}.
In the second step, using Lemma \ref{Lemma:NormRestriction}, $\alpha^\ast X^{\wedge m}$ is decomposed into a product of norms 
whose geometric fixed points are then computed using their multiplicative properties (Proposition \ref{Prop:GfpMonoidal} and  \ref{Prop:GfpNorm}).
The result now follows from Lemma \ref{Lemma:FreeCommutative}.
\end{proof}

\begin{Theorem}
Let $G$ be a compact Lie group, $\mathcal F$ a family of finite subgroups of $G$ and $R\subseteq \bz$ a ring
such that the group orders of $\mathcal F$ are invertible in $R$. Then the geometric fixed point functor
\[
	\Phi^{\Com}:\Com(\gensp)_{\mathcal F, R}\overset{\simeq}{\lra} \Orb_{\mathcal F}\hyph \Com(\Sp)_R
\]
 is an equivalence.
\end{Theorem}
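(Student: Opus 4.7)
The plan is to deduce the equivalence formally from the preceding results by invoking the $\infty$-categorical monadic comparison theorem applied to the square of free-forgetful adjunctions displayed in the introduction. All the substantive computational content has already been packaged into the earlier propositions; what remains is to verify the hypotheses of Barr--Beck--Lurie.

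First I would record the four ingredients needed. \emph{(i)} Both vertical adjunctions are monadic: the left one by Proposition \ref{Prop:MonadicAdjunction}, and the right one because $\bu:\orbcat[\cf]\hyph\Com(\Sp)_R\rightarrow \orbcatinv[\cf]\hyph\Sp_R$ is conservative and preserves geometric realizations (these are computed pointwise, and objectwise $\bu$ preserves sifted colimits for the same reason as in the non-equivariant case). \emph{(ii)} The square of forgetful functors commutes up to a natural equivalence, as shown in the proposition preceding Section \ref{Sec:MultEq}. \emph{(iii)} The top horizontal functor $\Phi$ is an equivalence by Theorem \ref{Theorem:AdditiveEquivalence}. \emph{(iv)} The Beck--Chevalley transformation
\[
(\bp\circ\iota_!)\circ\Phi\;\lra\;\Phi^{\Com}\circ\bp
\]
is an equivalence, which is precisely Proposition \ref{Prop:FreeCommutative}.

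With these in hand, the argument proceeds as follows. Let $\mathbb T=\mathbb U\circ\mathbb P$ and $\mathbb T'=\mathbb U\circ(\mathbb P\circ\iota_!)$ denote the induced monads on $\gensp_{\cf,R}$ and $\orbcatinv[\cf]\hyph\Sp_R$ respectively. Combining \emph{(ii)} and \emph{(iv)} with the unit and counit of the two adjunctions yields a natural equivalence $\Phi\circ \mathbb T\simeq \mathbb T'\circ \Phi$ compatible with the monad structure maps. Since $\Phi$ is an equivalence by \emph{(iii)}, it identifies $\mathbb T$-algebras with $\mathbb T'$-algebras, and under the monadic equivalences
\(
\Com(\gensp)_{\cf,R}\simeq \mathrm{Alg}_{\mathbb T}(\gensp_{\cf,R})
\)
and
\(
\Orb_{\cf}\hyph\Com(\Sp)_R\simeq \mathrm{Alg}_{\mathbb T'}(\orbcatinv[\cf]\hyph\Sp_R)
\)
this identification is visibly implemented by $\Phi^{\Com}$. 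Hence $\Phi^{\Com}$ is an equivalence.

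The only step that is not essentially formal is the verification that the Beck--Chevalley transformation in \emph{(iv)} is an equivalence, but this is exactly the content of Proposition \ref{Prop:FreeCommutative}, whose proof reduces (via the explicit description of free commutative algebras from Lemma \ref{Lemma:FreeCommutative}, the untwisting isomorphism of Lemma \ref{Lemma:NormRestriction}, and the decomposition of Lemma \ref{Lemma:FixedPointsQuotient}) to the multiplicativity of geometric fixed points (Propositions \ref{Prop:GfpMonoidal} and \ref{Prop:GfpNorm}). So once that proposition is in hand, the theorem is a direct application of monadicity; no further computation with equivariant spectra is needed.
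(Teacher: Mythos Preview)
Your proposal is correct and follows essentially the same route as the paper: assemble the square of free--forgetful adjunctions, note that the forgetful square commutes, that the top functor is an equivalence (Theorem \ref{Theorem:AdditiveEquivalence}), that both adjunctions are monadic, and that the Beck--Chevalley map is an equivalence (Proposition \ref{Prop:FreeCommutative}), then conclude by monadic comparison. The only cosmetic difference is that the paper cites \cite[4.7.3.16]{ha} directly for the comparison step, whereas you spell out the identification of monad algebras by hand; your treatment of the right-hand monadicity is also slightly more explicit than the paper's, which relegates it to a remark.
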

\begin{proof}
We consider the diagram
\begin{center}
	\begin{tikzcd}[row sep = large, column sep = large]
	\gensp_{\mathcal F, R}	\arrow[r, "\Phi"]\arrow[d, shift right = 2, "\mathbb P_{\gen}", swap]	&
	\Orb^{\times}_{\mathcal F}\hyph \Sp_R	\arrow[d, shift right = 2, swap, "\mathbb P\circ \iota_!"]		\\
	
	\Com(\gensp)_{\mathcal F, R}	\arrow[r]\arrow[u, shift right = 2, "\mathbb U", swap]	&
	\Orb_{\mathcal F}\hyph \Com(\Sp)_R	\arrow[u, shift right = 2, "\mathbb U", swap]
	\end{tikzcd}
\end{center}
of $\infty$-categories in which the square with the forgetful functors commutes.
The upper horizontal arrow is an equivalence (Theorem \ref{Theorem:AdditiveEquivalence}) and both adjunctions are monadic (Proposition \ref{Prop:MonadicAdjunction}). 
By \cite[4.7.3.16]{ha}, it suffices in this situation to show that the natural transformation
\[
	(\mathbb P\circ \Lan)\Phi X\ra \Phi_{\Com}(\mathbb P_{\gen}X)
\]
is an equivalence and this is the content of Proposition \ref{Prop:FreeCommutative}.
\end{proof}
\begin{Remark}
Similarly, a monadicity argument can also be used to show that the canonical map
\[
 \Com(\Sp_R)=N(\Com(\orthspec))[W_R^{-1}]\overset{\simeq}{\longrightarrow}\operatorname{CAlg}(\Sp_R)
\]
is an equivalence between the underlying $\infty$-category of commutative ring spectra and commutative algebras in the symmetric monoidal $\infty$-category of spectra.
This is of course well-known, but the author does not know a reference in the literature that directly applies to orthogonal spectra.
\end{Remark}


\appendix
\section{Homotopy colimits of simplicial objects}
In this appendix we briefly explain the essential steps in showing that the forgetful functor
\[
	\bu:\Com(\gensp)_{\cf,R}\rightarrow\gensp_{\cf,R}
\]
preserves colimits of simplicial objects. We first reduce to the case that $G$ is finite by noting that the restriction functors
$\res^G_H$ to finite subgroups are left adjoints that jointly detect equivalences.
We may also assume $R=\bz$, since the inclusion of $R$-local spectra preserves colimits.

For technical reasons, we need to move to a simplicial setting on the pointset level.
Let $G\hyph\Sp^\Sigma_{\operatorname{sSet}}$ ($G\hyph\Sp^\Sigma_{\operatorname{Top}}$) denote the category of $G$-symmetric spectra
in simplicial sets (topological spaces) endowed with the positive $G$-flat model structure \cite[Thm.\ 4.7]{hausmann}
(with respect to a complete representation universe), which also transfers to a model structure on commutative ring spectra.
The forgetful functor $G\hyph\orthspec\rightarrow G\hyph\Sp^\Sigma_{\operatorname{Top}}$ from orthogonal spectra
is the right adjoint of a Quillen equivalence \cite[Thm.\ 7.5]{hausmann}.
\begin{Proposition}
Let $G$ be a finite group. There is a zig-zag of Quillen equivalences
\[
	\Com(G\hyph\orthspec)\simeq_Q\Com(G\hyph\Sp^{\Sigma}_{\operatorname{sSet}})
\]
between commutative orthogonal $G$-ring spectra and symmetric $G$-ring spectra of simplicial sets.
\end{Proposition}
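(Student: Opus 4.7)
The strategy is to factor the comparison through the intermediate category $G\hyph\Sp^\Sigma_{\operatorname{Top}}$ of topological $G$-symmetric spectra and to lift two composable Quillen equivalences to commutative rings, yielding
\[
    \Com(G\hyph\orthspec) \simeq_Q \Com(G\hyph\Sp^\Sigma_{\operatorname{Top}}) \simeq_Q \Com(G\hyph\Sp^\Sigma_{\operatorname{sSet}}).
\]
The second pair is the termwise geometric realization $\lvert - \rvert$ and singular set $\operatorname{Sing}$ adjunction; the first is Hausmann's prolongation/forgetful pair \cite[Thm.\ 7.5]{hausmann}.

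For each leg I would argue as follows. The left adjoint is strong symmetric monoidal (prolongation by construction via its universal property, and $\lvert - \rvert$ since it preserves finite products), so the adjunction lifts to commutative monoids. Both sides carry transferred model structures on commutative ring spectra in which weak equivalences and fibrations are detected by the forgetful functor to spectra; this is part of Hausmann's symmetrizability analysis in the symmetric spectrum cases and is handled for $\Com(G\hyph\orthspec)$ by one of the options discussed in Remark \ref{Rmk:Error}. In particular, the right adjoint on commutative rings preserves fibrations and acyclic fibrations, making each pair a Quillen adjunction.

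To upgrade each to a Quillen equivalence, I would verify that the derived unit and counit are weak equivalences by checking on underlying spectra, where the corresponding statement is known. The nontrivial point is that the underlying spectrum of a cofibrant commutative ring in either source category still computes the derived left adjoint. Since the left adjoints are strong symmetric monoidal, they commute with the free commutative algebra functor, so by a cell-by-cell induction along the filtration of a cofibrant commutative ring by pushouts of $\mathbb P(i)$ along generating (acyclic) cofibrations $i$, the verification reduces to showing that these pointwise formulas remain homotopical. With the positive flat/complete generating cofibrations, the equivariant extended powers $E_G\Sigma_n{}_+\smashp_{\Sigma_n}(-)^{\smashp n}$ are homotopical on the relevant cofibrant pieces, so $\bp$ and $\lvert-\rvert$ preserve the underlying weak equivalences.

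The main obstacle is this last flatness/symmetrizability input: a cofibrant commutative ring spectrum is not cofibrant as an underlying spectrum, and one needs to control the underlying homotopy type of cofibrant replacements through the filtration by free-algebra attachments. Once this is supplied (from Hausmann for the symmetric side, and from the corrected flat model structure of Remark \ref{Rmk:Error} for the orthogonal side), the rest is a routine comparison of derived units and counits.
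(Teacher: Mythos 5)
Your factorization through topological symmetric $G$-spectra and the lifting of both adjunctions to commutative rings is exactly the paper's strategy, but you diverge at the decisive step, and in a way worth flagging. You assert that ``a cofibrant commutative ring spectrum is not cofibrant as an underlying spectrum'' and therefore propose a cell-by-cell induction over pushouts of $\mathbb P(i)$, controlling the extended powers ${E_G\Sigma_n}_+\smashp_{\Sigma_n}(-)^{\smashp n}$ to see that the underlying spectrum of a cofibrant commutative ring still computes the derived left adjoint. In the flat ($\bs$-model) structures used here this premise is false, and its failure is precisely the point of the paper's argument: the flat model structure is ``convenient'' in the sense that a cofibrant commutative ring spectrum has underlying spectrum that is (non-positively) flat cofibrant. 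Granting this, the derived unit/counit for the lifted adjunction on commutative rings is computed on an underlying cofibrant spectrum, where the spectrum-level Quillen equivalence (Hausmann's prolongation/forgetful pair, and then realization/singular complex) already gives the result --- no filtration argument or symmetrizability analysis of extended powers is needed at this stage. Your route is viable in principle (it is the standard MMSS/HHR-style argument one falls back on when convenience is unavailable), but it is substantially heavier, and as written it leaves the key homotopical control of the $\mathbb P(i)$-pushout filtration as an unproved ``main obstacle,'' whereas the paper discharges the whole issue with the single structural fact about flat cofibrant commutative rings; the symmetrizability issues you gesture at via Remark~\ref{Rmk:Error} concern setting up the transferred model structure itself, not the comparison of the two categories. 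Also note the adjoint directions: the forgetful functor from orthogonal to symmetric spectra is the right adjoint, so strong monoidality lives on the prolongation, as you say, but the equivalence check is then a derived unit computation on cofibrant commutative symmetric ring spectra, which is where the convenience property enters.
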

\begin{proof}
The forgetful functor is lax symmetric monoidal and the induced oplax monoidal structure on the left adjoint is actually an isomorphism.
In particular, this shows that the adjunction restricts to one between categories of commutative ring spectra,
which is automatically a Quillen pair because the model structures are transferred.
The right adjoint reflects weak equivalences and so it remains to show that the counit is a weak equivalence on cofibrant ring spectra. 
In the flat model structure the underlying spectra are (non-positively) cofibrant and in that case we already know that the unit is an equivalence.
We thus get a Quillen equivalence with symmetric spectra in topological spaces.
The adjoint pair geometric realization and singular complex prolongs to a further equivalence with
spectra in simplicial sets.
\end{proof}
The theory of colimits in the underlying $\infty$-category of a simplicial model category reduces to that of homotopy colimits \cite[4.2.4]{htt}
and so it will suffice to show that the forgetful functor preserves these. 
The key technical advantage of symmetric spectra in simplicial sets is that the geometric realization functor
\[
	\vert-\vert:\Fun(\Delta^{\op}, G\hyph\Sp^\Sigma)\rightarrow G\hyph\Sp^\Sigma
\]
is homotopical.
\begin{Proposition}
Let $f:X\rightarrow Y$ be an objectwise equivalence of simplicial symmetric $G$-spectra.
Then its geometric realization $\vert X\vert\rightarrow \vert Y\vert$ is also an equivalence.
\end{Proposition}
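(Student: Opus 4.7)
The plan is to verify the statement on equivariant homotopy groups $\pi^H_*$ for each finite subgroup $H\leq G$, reducing it via the homotopy invariance of geometric realization of simplicial sets to the classical fact that a level-wise weak equivalence of bisimplicial sets induces a weak equivalence on diagonals.

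First I would analyse what happens at each spectrum level. The value $|X_\bullet|(V)$ is computed as the realization of the simplicial pointed simplicial set $[n]\mapsto X_n(V)$, which carries a compatible action of $\Sigma_V\times G$; since geometric realization of a simplicial simplicial set is equivalent to the diagonal of the underlying bisimplicial set, and the diagonal construction commutes with finite limits (in particular with taking $H$-fixed points for $H$ finite), the question of whether a level-wise equivariant weak equivalence at spectrum level $V$ gives a weak equivalence after realization reduces via fixed points to the classical nonequivariant statement. The latter needs no cofibrancy hypothesis, since every simplicial set is cofibrant in the Kan--Quillen model structure.

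Next, to upgrade from level-wise weak equivalence at each spectrum level (which is \emph{not} quite the hypothesis) to the actual input of a level-wise stable equivalence $f_n\colon X_n\to Y_n$, I would perform a functorial level-fibrant replacement, replacing each $X_n$ and $Y_n$ by an equivalent level-$G$-$\Omega$-spectrum. Between such objects, stable equivalences coincide with level-wise equivalences of the underlying $(\Sigma_V\times G)$-simplicial sets, so the previous paragraph applies. Equivalently, one can argue directly on $\pi^H_k(-)$: this is a filtered colimit over $V$ of equivariant homotopy classes after a levelwise fibrant replacement, and both filtered colimits and fibrant replacement can be arranged to commute suitably with geometric realization in the simplicial direction.

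The main obstacle, in my view, is arranging the level-fibrant replacement functorially in the simplicial direction without altering the homotopy type of the realization. In practice this is handled either by a Reedy/Bousfield--Friedlander-style argument (observing that the skeletal filtration of $|X_\bullet|$ is a sequence of h-cofibrations and that the associated spectral sequence has $E^1$-page $\pi^H_q X_p$, which $f$ maps isomorphically), or by showing directly that geometric realization is left Quillen for a suitable level model structure, so that it preserves equivalences between bifibrant objects, which combined with the simplicial-set-specific fact that every object is cofibrant at each spectrum level yields the full homotopical property claimed in the proposition.
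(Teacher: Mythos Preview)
Your main line—reducing to the diagonal of a bisimplicial set—only handles objectwise \emph{level} equivalences, as you correctly recognize. The fibrant-replacement bridge you then propose is circular: replacing each $X_n$ by a level-$G$-$\Omega$-spectrum $\hat X_n$ produces an objectwise stable equivalence $X_\bullet\to\hat X_\bullet$, and concluding that $\lvert X_\bullet\rvert\to\lvert\hat X_\bullet\rvert$ is a stable equivalence is precisely the proposition under consideration. Your option (b) does not escape this either: being left Quillen only gives preservation of weak equivalences between Reedy-cofibrant objects, and cofibrancy of each $X_n(V)$ as a simplicial set does not make $X_\bullet$ Reedy cofibrant as a simplicial \emph{spectrum}—that would require the latching maps to be cofibrations of spectra, not merely levelwise monomorphisms.

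Your option (a)—inducting along the skeletal filtration—is the actual argument, and it is exactly what the paper does, citing Harper for the non-equivariant case. The point you gesture at but do not isolate, and which the paper singles out as the one thing to check, is why this filtration is homotopically meaningful without any cofibrancy hypothesis on $X_\bullet$: the latching maps $L_nX\to X_n$ are automatically levelwise monomorphisms (they are inclusions of degenerate simplices at each spectrum level), and for a monomorphism of simplicial sets the strict quotient is simplicially homotopy equivalent to the mapping cone. This is the sole place where working in simplicial sets rather than topological spaces matters, and it is what makes the skeletal filtration a sequence of h-cofibrations with associated graded of the expected stable homotopy type, so that the induction for stable equivalences goes through. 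The same observation holds after passing to $H$-fixed points levelwise, which is why the equivariant case requires no new idea. In short, what you list as a fallback is the proof, and the key technical input is the monomorphism-implies-cofiber-equals-quotient fact rather than anything about bisimplicial diagonals or fibrant replacement.
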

\begin{proof}
Non-equivariantly this is \cite[1.11]{harper}. The proof proceeds by inducting along the skeletal filtrations and also works equivariantly,
because homotopically it only depends on the cone of a monomorphism being equivalent to the actual quotient. In fact, they are level-homotopy equivalent.
\end{proof}
In particular, this shows that the realization 
\[
	\vert X \vert \simeq \hocolim_{\Delta^{\op}}X
\]
of a simplicial object $X$ in $G\hyph\Sp^\Sigma$ always has the correct homotopy type,
since we can Reedy-cofibrantly replace without changing it (c.f.\ \cite{hirschhorn}).

\begin{Proposition}
Let $R$ be a simplicial object in commutative symmetric $G$-ring spectra.
Then its realization can be computed in underlying category of symmetric $G$-spectra, that is, there is a natural isomorphism
\[
	U\vert R\vert\cong \vert UR\vert.
\]
\end{Proposition}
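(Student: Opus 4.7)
The plan is to construct a commutative ring structure on the realization $|UR|$ formed in symmetric $G$-spectra, and then identify it with the realization $|R|$ taken in commutative ring spectra via a universal-property argument. The whole strategy rests on the observation that both $|-|$ and $\smashp$ are left adjoints, so they commute with each other.

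First I would establish that smash products commute with realization. The smash product on symmetric $G$-spectra in simplicial sets is a left adjoint in each variable (it is part of a closed symmetric monoidal structure), and the geometric realization $|-|:\Fun(\Delta^{\op}, G\hyph\Sp^{\Sigma}_{\operatorname{sSet}})\to G\hyph\Sp^\Sigma_{\operatorname{sSet}}$ is the left adjoint to forming constant simplicial objects. A standard coend/Fubini argument (which works in simplicial sets but would fail for topological spaces without cofibrancy) then produces a natural isomorphism $|X\smashp Y|\cong |X|\smashp|Y|$ of symmetric $G$-spectra for simplicial objects $X,Y$, and this isomorphism is compatible with the associativity, unit, and symmetry constraints of the monoidal structure.

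Next, for a simplicial commutative ring spectrum $R$ the levelwise multiplication assembles into a morphism $R\smashp R\to R$ of simplicial symmetric spectra, whose realization composed with the isomorphism above yields a multiplication
\[
	|UR|\smashp |UR|\cong |UR\smashp UR|\to |UR|,
\]
and the unit $\bs\to R$ (viewed as coming from a constant simplicial object) realizes to a unit map on $|UR|$. Since the associativity, commutativity, and unit diagrams hold in each simplicial degree and realization preserves them via the coherence compatibility established in the previous paragraph, these descend to make $|UR|$ into an honest commutative symmetric $G$-ring spectrum. The canonical cocone $UR_\bullet\to |UR|$ is then in fact a cocone in $\Com(G\hyph\Sp^\Sigma_{\operatorname{sSet}})$, and the universal property of $|R|$ as a colimit in commutative ring spectra produces a morphism $|R|\to|UR|$ of commutative ring spectra whose underlying map is the inverse of the canonical comparison $|UR|\to U|R|$.

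The main obstacle will be making precise the second step: verifying that the coherence data of the symmetric monoidal structure is transported faithfully through $|X\smashp Y|\cong|X|\smashp|Y|$, so that the induced multiplication on $|UR|$ is genuinely associative, commutative, and unital rather than merely so up to specified but unchecked isomorphism. Once this diagrammatic bookkeeping is settled, the identification $U|R|\cong |UR|$ is a formal consequence of the two universal properties at play.
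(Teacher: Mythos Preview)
Your approach is correct and shares the paper's first step---the natural isomorphism $|X|\smashp|Y|\cong|X\smashp Y|$; the paper phrases this via the bisimplicial smash product $X\bar\smashp Y$ and its diagonal, which is the same coend/Fubini argument you invoke. Where the two diverge is in how the general statement is deduced from this monoidality. You construct the commutative ring structure on $|UR|$ by hand and then compare with $|R|$ via universal properties; the paper instead first treats the free case, obtaining $\Sigma_n$-equivariant isomorphisms $|X|^{\smashp n}\cong|X^{\smashp n}|$ and hence $|\bp X|\cong\bp|X|$, and then reduces an arbitrary $R$ to this via the canonical reflexive coequalizer presentation $\bp\bp R\rightrightarrows\bp R\to R$ (using that realization is a left adjoint and that $U$ creates reflexive coequalizers). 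The free-algebra reduction is slicker in that it sidesteps most of the coherence bookkeeping you correctly identify as the main obstacle in your route.

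One small point you should make explicit: from the universal property of $|R|$ in $\Com$ you obtain a ring map $|R|\to|UR|$, but only the composite $|UR|\to U|R|\to|UR|$ is immediately the identity by the universal property of $|UR|$ in spectra, since $U|R|$ has no a priori universal property there. The cleanest way to close the argument is to verify directly that $|UR|$ with its induced multiplication satisfies the universal property of the colimit in $\Com$: given any cocone $R_\bullet\to S$ of ring spectra, the unique spectrum map $|UR|\to US$ is automatically a ring map by naturality of the monoidal constraint you established. This forces the comparison to be an isomorphism and is indeed formal once the coherence is in place, but it is not quite the ``two universal properties'' you name.
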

\begin{proof}
This is mostly formal and a detailed proof of an analogous statement in the topological setting is given in \cite[2.1.7]{global}, also see \cite{ekmm}.
We only recall the outline of the argument here. Using that the smash product commutes with colimits and simplicial tensors in each variable, one identifies the smash product 
\[
	\vert X\vert\smashp\vert Y\vert\cong \vert X\bar\smashp Y\vert
\]
of the realizations of two simplicial spectra $X$ and $Y$ 
as the realization of the bisimplicial object $(n,m)\mapsto X_n\bar\smashp Y_m$. This is in turn isomorphic to the realization of the diagonal
$X\smashp Y =\Delta^\ast(X\bar\smashp Y)$, i.e.\ the level-wise smash product. Combined, this yields  $\Sigma_n$-equivariant isomorphisms
\(
	\vert X\vert^{\smashp n}\cong \vert X^{\smashp n}\vert
\)
and hence an identification
\[
	\vert\bp X\vert\cong\bp\vert X\vert \cong \vert U\bp X\vert 
\]
on free commutative algebras, where the realization on the left is performed in the category of commutative ring spectra.
The general case then follows by considering the canonical coequalizer presentation
\[
	R\leftarrow\bp R\leftleftarrows\bp(\bp R)
\]
of a commutative ring spectrum $R$.
\end{proof}

\begin{Corollary}
The forgetful functor preserves homotopy colimits of simplicial objects.
\end{Corollary}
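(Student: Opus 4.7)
The plan is to assemble the two preceding propositions and transport the statement back from symmetric spectra in simplicial sets to orthogonal spectra and their underlying $\infty$-categories. By the reductions at the start of the appendix, we may assume $G$ is finite and $R = \bz$, and by the first proposition we may work on the pointset level in the category $\Com(G\hyph\Sp^\Sigma_{\operatorname{sSet}})$ of commutative symmetric $G$-ring spectra in simplicial sets.

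In this setting, the homotopicality of geometric realization (the second proposition of the appendix) implies that for any simplicial object $R_\bullet$ in commutative ring spectra, the realization $|R_\bullet|$ has the correct homotopy type without any cofibrancy hypothesis. Indeed, a Reedy-cofibrant replacement $\tilde R_\bullet \to R_\bullet$ is a levelwise weak equivalence, and by homotopicality its realization $|\tilde R_\bullet| \simeq |R_\bullet|$ is an equivalence; but $|\tilde R_\bullet|$ models $\hocolim_{\Delta^{\op}} R_\bullet$ by general model-categorical principles. The analogous statement holds in the underlying category of symmetric $G$-spectra, so $|U R_\bullet| \simeq \hocolim_{\Delta^{\op}} U R_\bullet$.

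Combining this with the isomorphism $U |R_\bullet| \cong |U R_\bullet|$ of the third proposition yields a natural equivalence
\[
    U\bigl(\hocolim_{\Delta^{\op}} R_\bullet\bigr) \;\simeq\; \hocolim_{\Delta^{\op}}(U R_\bullet).
\]
Finally, since the underlying $\infty$-categorical colimit of a simplicial diagram in a (simplicial) model category is computed by a pointset homotopy colimit (by Proposition \ref{Prop:ModelCategoryBicomplete}.(ii)), this pointset identification passes to the claimed preservation statement for the forgetful functor $\mathbb U: \Com(\gensp)_{\cf, R} \to \gensp_{\cf, R}$. The main non-formal input has already been absorbed into the two propositions immediately preceding the corollary — the homotopicality of realization and its commutation with $U$ — so what remains is purely the assembly described above.
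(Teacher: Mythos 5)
Your proposal is correct and follows essentially the same route as the paper: reduce to symmetric $G$-spectra in simplicial sets, use that realization of a Reedy cofibrant simplicial object models the homotopy colimit, invoke homotopicality of realization to dispense with cofibrancy on the other side, and apply the isomorphism $U\vert R\vert\cong\vert UR\vert$ to conclude. The only cosmetic difference is that you absorb the Reedy cofibrancy step into the claim that $\vert R_\bullet\vert$ always has the correct homotopy type (which is fine, since equivalences of commutative ring spectra are detected on underlying spectra and realization there is homotopical), whereas the paper simply evaluates the chain on a Reedy cofibrant object.
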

\begin{proof}
The previous discussion yields equivalences
\[
	U(\hocolim_{\Delta^{\op}}R)\simeq U\vert R\vert\cong \vert UR\vert\simeq\hocolim_{\Delta^{\op}}UR
\]
for a Reedy cofibrant simplicial object in commutative ring spectra.
\end{proof}
\bibliography{references.bib}
\bibliographystyle{alpha}
%
%

\end{document}